\newenvironment{eq}{\begin{equation}}{\end{equation}}
\newenvironment{proof}{{\bf Proof}:}{\vskip 5mm }
\newtheorem{proposition}{Proposition}[subsection]
\newtheorem{lemma}[proposition]{Lemma}
\newtheorem{definition}[proposition]{Definition}
\newtheorem{remark}[proposition]{Remark}
\newtheorem{problem}[proposition]{Problem}
\newtheorem{construction}[proposition]{Construction}
\newcommand{\llabel}[1]{\label{#1}}
\newcommand{\comment}[1]{}
\newcommand{\sr}{\rightarrow}
\newcommand{\nn}{{\bf N\rm}}
\newcommand{\wt}{\widetilde}
\newcommand{\wh}{\widehat}
\newcommand{\spc}{{\,\,\,\,\,\,\,}}
\newcommand{\wtOb}{{\wt{\mathcal Ob}}}
\newcommand{\Ob}{{\mathcal Ob}}
\newcommand{\wOb}{\wt{\Ob}}
\newcommand\PPi{{\bf \Pi}}
\begin{document}
\parskip = 2mm
\begin{center}
{\bf\Large Products of families of types and $(\Pi,\lambda)$-structures on C-systems\footnote{\em 2000 Mathematical Subject Classification: 
03F50, % metamathematics of constructive systems 
18C50  %	Categorical semantics of formal languages 
03B15, % higher-order logic and type theory
18D15, % Closed categories (closed monoidal and Cartesian closed categories, etc.)
% 03B22, % abstract deductive systems 
% 03G25 % other algebras related to logic 
}}

%??? Add Pi without the eta-rule.

\vspace{3mm}

{\large\bf Vladimir Voevodsky}\footnote{School of Mathematics, Institute for Advanced Study,
Princeton NJ, USA. e-mail: vladimir@ias.edu}
\vspace {3mm}

%July 2015, April-Oct 2016 
\end{center}
\begin{abstract}
In this paper we continue, following the pioneering works by J. Cartmell and T.\,Streicher, the study of the most important structures on C-systems, the structures that correspond, in the case of the syntactic C-systems, to the $(\Pi,\lambda,app,\beta,\eta)$-system of inference rules. 

One such structure was introduced by J. Cartmell and later studied by T. Streicher under the name of the products of families of types. 

We introduce the notion of a $(\Pi,\lambda)$-structure and construct a bijection, for a given C-system, between the set of $(\Pi,\lambda)$-structures and the set of Cartmell-Streicher structures. In the following paper we will show how to construct, and in some cases fully classify, the $(\Pi,\lambda)$-structures on the C-systems that correspond to universe categories.

The first section of the paper provides careful proofs of many of the properties of general C-systems.

Methods of the paper are fully constructive, that is, neither the axiom of excluded middle nor the axiom of choice are used.
\end{abstract}

\tableofcontents

\numberwithin{equation}{subsection}

\subsection{Introduction}
\label{Sec.1}

The concept of a C-system in its present form was introduced in \cite{Csubsystems}. The type of the C-systems is constructively equivalent to the type of contextual categories defined by Cartmell in \cite{Cartmell0} and \cite{Cartmell1} but the definition of a C-system is slightly different from the Cartmell's foundational definition.

In this paper we consider what might be the most important class of structures on C-systems - the structures that correspond, for syntactic C-systems, to the operations of dependent product,  $\lambda$-abstraction and application that satisfy the $\beta$ and $\eta$ rules. The first such structure was defined for general C-systems by John Cartmell in \cite[pp. 3.37 and 3.41]{Cartmell0} as a part of what he called a strong M.L. structure. It was later studied by Thomas Streicher in \cite[p.71]{Streicher} who called a C-system (contextual category) together with such a structure a ``contextual category with products of families of types''. 

The goal of this paper is to define another structure on C-systems, which we call the $(\Pi,\lambda)$-structure, and to establish a bijection between the set of Cartmell-Streicher structures and $(\Pi,\lambda)$-structures. 
The $(\Pi,\lambda)$-structures will be studied in \cite{Pilambda}. 

This paper, together with \cite{Pilambda}, forms a more detailed and systematic version of the earlier preprint \cite{fromunivwithPi}.  

A note must be made about our use of the expressions ``a structure'' and ``the structure''. In the latter case, as for example in ``the group structure'', we usually refer to a type of structure on some objects. If we ignore the small variations in the definition, there is only one notion of group structure and ``the group structure'' refers to this notion. On the other hand when we say ``a group structure on X'' we mean a particular instance or an element of the set of group structures. Thus we can talk about the Cartmell-Streicher structure and the $(\Pi,\lambda)$-structure on C-systems and also about the bijection between the set of Cartmell-Streicher structures and $(\Pi,\lambda)$-structures on a C-system. 

We start the paper with Section \ref{Sec.2.1} where we establish a number of general results about C-systems. Some of these results are new. Some have been stated by Cartmell \cite{Cartmell0} and Streicher \cite{Streicher}, but without proper mathematical proofs. Among notable new facts we can mention Lemma \ref{2016.06.11.l1} that shows that the canonical direct product in a C-system is strictly associative. 

In Section \ref{Sec.2.2} we construct on any C-system two families of presheaves - $\Ob_n$ and $\wtOb_n$. These presheaves play a major role in our approach to the C-system formulation of systems of operations that correspond to systems of inference rules. The main result here is Construction \ref{2016.06.17.constr1} for Problem \ref{2016.06.11.prob2}.
It is likely that constructions for various other variants of this problem involving morphisms between presheaves $\Ob_*$ and $\wtOb_*$ can be given. The full generality of this result should involve as the source fiber products of $\Ob_*$ and $\wtOb_*$ relative to morphisms satisfying certain properties and as the target $\Ob_*$ or $\wtOb_*$. We limit ourselves to Construction \ref{2016.06.17.constr1} here because it is the only case that will be required later in the paper. 

Up to Section \ref{Sec.2.1} all our results are about objects and morphisms of a single C-systems or about their behavior under homomorphisms of C-systems. Starting with Section \ref{Sec.2.2} we begin to consider presheaves on C-systems. There is a foundational issue related to the notion of a presheaf that is rarely if ever addressed. We discuss it in some detail in Remarks \ref{2016.09.05.rem1} and \ref{2016.11.08.rem1}. 

In Section \ref{Sec.2.3} we first recall the definition of the Cartmell-Streicher structure on a C-system. Then, in Definition \ref{2015.03.09.def2}, we give the main definition of the paper, the definition of a $(\Pi,\lambda)$-structure. In the rest of this section we work on constructing a bijection between the sets of Cartmell-Streicher structures and $(\Pi,\lambda)$-structures on a given C-system.

This bijection is the main result of the paper. Its construction uses most of the results of Section \ref{Sec.2.1} as well as results from the appendices. 

A Cartmell-Streicher structure on $CC$ can be seen as a pair $(\PPi,Ap)$ where $\PPi$ is a function $Ob_{\ge 2}\sr Ob$ satisfying  conditions of Definition \ref{2015.03.17.def1}(1) and $Ap$ is a function $Ob_{\ge 2}\sr Mor$ satisfying conditions of Definition \ref{2015.03.17.def1}(2) relative to $\PPi$.

A $(\Pi,\lambda)$-structure is a pair $(\Pi,\lambda)$ where $\Pi$ is a morphism of presheaves $\Ob_2\sr \Ob_1$ and $\lambda$ is a morphism of presheaves $\wtOb_2\sr \wtOb_1$ such that 
\begin{eq}
\llabel{2016.10.03.eq1}
\begin{CD}
\wtOb_2 @>\lambda>> \Ob_1\\
@V\partial VV @VV\partial V\\
\wtOb_1 @>\Pi>> \Ob_1
\end{CD}
\end{eq}
is a pullback. 

Substituting $i=2$ and $j=1$ in Construction \ref{2016.06.17.constr1} we obtain a bijection $\Phi$ from the set of morphisms of presheaves of the form $\Pi:\Ob_2\sr \Ob_1$ to the set of functions $\PPi:Ob_{\ge 2}\sr Ob$ satisfying the conditions of Definition \ref{2015.03.17.def1}(1). 

Let $All\lambda_1^{\Pi}$ be the set of morphisms $\lambda:\wtOb_2\sr \wtOb_1$ that make (\ref{2015.03.09.eq1}) a pullback, that is, which form, together with $\Pi$, a $(\Pi,\lambda)$-structure.  

Let $AllAp_1^\PPi$ be the set of functions $Ap:Ob_{\ge 2}\sr Mor$ that satisfy the conditions of Definition \ref{2015.03.17.def1}(2) relative to $\PPi$, that is, which form, together with $\PPi$, a Cartmell-Streicher structure. 

It remains to construct, for any morphism of presheaves $\Pi:\Ob_2\sr \Ob_1$,  a bijection of the form $All\lambda_1^{\Pi}\sr AllAp_1^{\Phi(\Pi)}$. 

The bijection that we construct is the composition of three bijections
\begin{eq}\llabel{2016.10.03.eq2}
All\lambda_1^{\Pi}\sr All\lambda_2^{\Pi}\sr AllAp_2^{\Phi(\Pi)}\sr AllAp_1^{\Phi(\Pi)}
\end{eq}

In this sequence the set $All\lambda_2^{\Pi}$ is the set of double families (families with two parameters) of bijections of the form
$$\partial^{-1}(B)\sr \partial^{-1}(\Pi_{\Gamma}(B))$$
parametrized by $\Gamma\in Ob$ and $B\in \Ob_2(\Gamma)$ that satisfy some naturality condition. The first bijection in (\ref{2016.10.03.eq2}), defined in Construction \ref{2016.10.03.constr1}, is a particular case of a bijection between the set of morphisms of presheaves on $\cal C$ that complete a given diagram of presheaves of the form 
$$
\begin{CD}
\wt{F} @. \wt{G}\\
@VaVV @VVbV\\
F @>P>> G
\end{CD}
$$
to a pullback square and the set of double families of bijections of the form
$$a_X^{-1}(A)\sr b_X^{-1}(P_X(A))$$
parametrized by $X\in {\cal C}$ and $A\in F(X)$ that satisfy some naturality condition. The general case is considered in Appendix B.

For $B\in Ob_{\ge 2}$ let $A=ft(B)$ and $\Gamma=ft^2(B)$. The set $AllAp_2^\PPi$ is defined in a very similar way to the set $AllAp_1^\PPi$ with the main difference that while $AllAp_1^{\PPi}$ is the set of families morphisms of the form 
$$\PPi(B)\times_{\Gamma}A\sr B$$
parametrized by $B\in Ob_{\ge 2}$ and satisfying certain conditions, the set $AllAp_2^{\PPi}$ is the set of families morphisms of the form 
$$A\times_{\Gamma}\PPi(B)\sr B$$
also parametrized by $B\in Ob_{\ge 2}$ and satisfying a somewhat different set of conditions. 

The bijection between the sets $All\lambda_2^{\Pi}$ and $AllAp_2^{\Phi(\Pi)}$ is, in a sense, the main one of the three bijections. It is defined by constructing two functions,
$$C1:All\lambda_2^{\Pi}\sr AllAp_2^{\Phi(\Pi)}$$
in Construction \ref{2015.03.13.constr1} and 
$$C2:AllAp_2^{\Phi(\Pi)}\sr All\lambda_2^{\Pi}$$
in Construction \ref{2015.03.15.constr1} and proving in Lemmas \ref{2015.03.15.l1} and \ref{2015.03.15.l2} that these functions are mutually inverse bijections. 

The last of the three bijections, the bijection between $AllAp_2^{\PPi}$ and $AllAp_1^{\PPi}$ is defined in Construction \ref{2016.08.12.constr1}. It and its inverse are given by the composition with the exchange morphisms 
$$exch(A,\PPi(B);\Gamma):A\times_{\Gamma}\PPi(B)\sr \PPi(B)\times_{\Gamma}A$$
and
$$exch(\PPi(B),A;\Gamma):\PPi(B)\times_{\Gamma}A\sr A\times_{\Gamma}\PPi(B)$$
that are defined and whose properties are proved in Section \ref{Sec.2.1}.

The $(\Pi,\lambda)$-structures correspond to the $(\Pi,\lambda,app,\beta,\eta)$-system of inference rules. In Remark \ref{2016.09.25.rem1} we outline the definitions of structures that correspond to the similar systems but without the $\beta$- or $\eta$-rules. Such structures appear as natural variations of the $(\Pi,\lambda)$-structures. 

Our main construction proceeds through two intermediate structures whose sets are denoted by $All\lambda_2^{\Pi}$ and $AllAp_2^{\PPi}$. This shows that there are other structures on C-systems that are equivalent to the Cartmell-Streicher and $(\Pi,\lambda)$-structures.

Among such structures there is an important one that is obtained by reformulating for C-systems the structure that is defined in \cite[Def. 5]{ClairDybjer} and that we may call the Clairambault-Dybjer structure. The C-system version of this structure is closer to the $(\Pi,\lambda)$-structure than to the Cartmell-Streicher structure and it should not be difficult to construct a bijection between Clairambault-Dybjer structures and $(\Pi,\lambda)$-structures. We leave this for a future paper.  

The methods of this paper are fully constructive. It is written in the formalization-ready style, that is, in such a way that no long arguments are hidden even when they are required only to substantiate an assertion that may feel obvious to readers who are closely associated with a particular tradition of mathematical thought. 

In regard to the actual formalization we, firstly, make our arguments accessible to the formalization in the standard ZF - the Zermelo-Fraenkel theory. Secondly, we make them accessible to the formalization in the UniMath language (see \cite{UniMath2015}). It is the latter that allows us to claim that out methods are constructive. We do not consider the questions that arise in connection with the accessibility of our arguments to the formalization in various intuitionistic versions of the ZF (\cite{Friedman1977}, \cite{Aczel1978}). 

The main result of this paper is not a theorem but a construction and so are many of the intermediate results. Because of the importance of constructions for this paper we use a special pair of names Problem-Construction for the specification of the goal of a construction and the description of a particular solution.

In the case of a Theorem-Proof pair one usually refers (by name or number) to the theorem when  using the proof of this theorem. This is acceptable in the case of theorems because the future use of their proofs is such that only the fact that there is a proof but not the particulars of the proof matter. 

In the case of a Problem-Construction pair the content of the construction often matters in the future use. Because of this we have to refer to the construction and not to the problem and we assign in this paper numbers both to Problems and to Constructions. 

In this paper we continue to use the diagrammatic order of writing composition of morphisms, i.e., for $f:X\sr Y$ and $g:Y\sr Z$ the composition of $f$ and $g$ is denoted by $f\circ g$.

For a functor $\Phi:C\sr C'$ we let $\Phi^{\circ}$ denote the functor $PreShv(C')\sr PreShv(C)$ given by pre-composition with a functor $\Phi^{op}:C^{op}\sr (C')^{op}$. On objects one has 
$$\Phi^{\circ}(F)(X)=F(\Phi(X))$$
In the literature this functor is denoted both by $\Phi^*$ and $\Phi_*$ and we decided to use a new unambiguous notation instead. 

Acknowledgements are at the end of the paper.

%??? Check the references suggested by the third referee.

\subsection{General results on C-systems}
\label{Sec.2.1}

Some of the lemmas and theorems proved in this section can also be found in \cite{Cartmell1} and in \cite{Streicher}. However, many new results are included and we chose to provide independent proofs for a few known results for the convenience of the reference further in this paper and in the other papers of this series. 

Let us start by making some additions to the notations that were introduced in \cite{Csubsystems}. The new notations that we introduce are consistent with the notations introduced in \cite[pp.239-240]{Cartmell1}.
\begin{definition}\llabel{2016.08.06.def1}
Let $CC$ be a C-system. We will say that an object $X$ is over an object $Y$ and write $X\ge Y$ if $l(X)\ge l(Y)$ and $Y=ft^{l(X)-l(Y)}(X)$. We say that $X$ is above $Y$ and write $X>Y$ if $X$ is over $Y$ and $l(X)>l(Y)$. 
\end{definition}

Note that ``is over'' and ``is above'' are well-defined relations on $Ob(CC)$ with ``is over'' being reflexive and transitive and ``is above'' being transitive.  In addition one has
\begin{eq}
\llabel{2016.09.17.eq4}
{\rm if}\,\,X>\Gamma\,\,{\rm then}\,\,ft(X)\ge \Gamma
\end{eq} 
The following lemma provides an induction principle that in most proofs can be used instead of induction by length and that is more convenient than such induction.
\begin{lemma}
\llabel{2016.09.17.l1}
Let $\Gamma\in CC$ and let $P$ be a subset in $\{X\,|\,X\ge \Gamma\}$ such that 
\begin{enumerate}
\item $\Gamma\in P$,
\item if $X>\Gamma$ and $ft(X)\in P$ then $X\in P$.
\end{enumerate}
Then for all $X\ge \Gamma$, $X\in P$.
\end{lemma}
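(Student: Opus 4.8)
The plan is to reduce this "induction over objects above $\Gamma$" principle to ordinary induction on the length $l(X) - l(\Gamma)$, since that is the only induction principle genuinely available in the C-system axioms. Concretely, for a natural number $n$ I would prove by induction on $n$ the auxiliary statement $Q(n)$: \emph{for every $X$ with $X \ge \Gamma$ and $l(X) - l(\Gamma) = n$, one has $X \in P$.} The conclusion of the lemma then follows immediately, because any $X \ge \Gamma$ has $l(X) \ge l(\Gamma)$, so $l(X) - l(\Gamma)$ is a well-defined natural number $n$ and $Q(n)$ applies.

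For the base case $n = 0$: if $X \ge \Gamma$ and $l(X) = l(\Gamma)$, then by Definition \ref{2016.08.06.def1} we have $\Gamma = ft^{0}(X) = X$, so $X = \Gamma \in P$ by hypothesis (1). For the inductive step, assume $Q(n)$ and let $X \ge \Gamma$ with $l(X) - l(\Gamma) = n + 1$. Since $l(X) > l(\Gamma)$, in fact $X > \Gamma$. I would then show $ft(X) \ge \Gamma$ — this is exactly (\ref{2016.09.17.eq4}), or one checks it directly: $ft(X) = ft(ft^{\,n}(X)) \cdot$ rearranged, $\Gamma = ft^{l(X)-l(\Gamma)}(X) = ft^{\,n}(ft(X))$ and $l(ft(X)) = l(X) - 1 \ge l(\Gamma)$, so $ft(X) \ge \Gamma$ with $l(ft(X)) - l(\Gamma) = n$. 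By the inductive hypothesis $Q(n)$, $ft(X) \in P$. Now apply hypothesis (2): since $X > \Gamma$ and $ft(X) \in P$, we get $X \in P$. This completes the induction, and hence the proof.

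The only mildly delicate point — and the step I would expect to need the most care in a fully formalization-ready write-up — is the bookkeeping identity showing that $ft(X) \ge \Gamma$ with the length dropping by exactly one, i.e. that the two senses in which $ft(X)$ sits between $\Gamma$ and $X$ agree: that $ft^{\,n}(ft(X)) = ft^{n+1}(X) = \Gamma$ and that $l(ft(X)) = l(X) - 1$. Both follow from the basic properties of $ft$ and $l$ established in \cite{Csubsystems} (namely $l(ft(X)) = l(X) - 1$ when $l(X) > 0$, and the definition of iterated $ft$), but since the paper is written so as to hide no routine argument, I would spell out this composition of $ft$-powers explicitly rather than leave it implicit. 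Everything else is a direct appeal to the hypotheses (1) and (2) and to Definition \ref{2016.08.06.def1}.
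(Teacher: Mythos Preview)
Your proposal is correct and follows essentially the same approach as the paper's own proof: induction on $n = l(X) - l(\Gamma)$, with the base case $X = \Gamma$ handled by hypothesis (1), and the successor case using (\ref{2016.09.17.eq4}) to get $ft(X) \ge \Gamma$, the inductive hypothesis to get $ft(X) \in P$, and hypothesis (2) to conclude $X \in P$. The paper is somewhat terser about the $ft$-bookkeeping you flag, simply citing (\ref{2016.09.17.eq4}) and asserting $l(ft(X)) - l(\Gamma) = n$, but the argument is the same.
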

\begin{proof}
Let $X\ge \Gamma$ and $n=l(X)-l(\Gamma)$. Proceed by induction on $n$. For $n=0$ we have $X=\Gamma$ and therefore $X\in P$ by the first assumption. For the successor of $n$ we have that if $l(X)-l(\Gamma)=n+1$ then $X>\Gamma$. Therefore, by (\ref{2016.09.17.eq4}) we have $ft(X)\ge \Gamma$ and since $l(ft(X))-l(\Gamma)=n$ we have that $ft(X)\in P$ by the inductive assumption. We conclude that $X\in P$ by the second assumption of the lemma.
\end{proof}
\begin{remark}\rm
\llabel{2016.05.02.rem1b}
There is also another induction principle that can be used everywhere this one is used but also for purposes where this one fails. 

Since the notation ``ft'' comes from the word ``father'' we will call the concept that we want to introduce ``child''. For $X >Y$ in $CC$ denote by $ch(Y,X)$ and call "the child of $Y$ in the direction of $X$", the object $ft^{l(X)-l(Y)-1}(X)$. Then $X\ge ch(Y,X)>Y$ and $l(X)-l(ch(Y,X))=(l(X)-l(Y))-1$. There is a dual induction principle to the one that we stated above that uses the pairs $(X, ch(Y,X))$ instead of $(ft(X),Y)$.

Let, more generally, $ch_i(Y,X)=ft^{l(X)-l(Y)-i}(X)$. The advantage of using $ch(-,-)$ instead of $ft$ is that $ch(Y,X)$ are defined even in the systems where $X$ can be infinite over $Y$. 

Here we have to make a reference to the syntactic C-systems of type theories where $Ob(CC)$ is the set of contexts of the type theory (modulo alpha equivalence and possibly further equivalences). In formalization systems based on the univalent approach, for example in UniMath, structures such as $(\infty,1)$-categories or $A_{\infty}$-types are, intuitively, represented by infinite contexts. For example, the information about an $(\infty,1)$-category $\cal C$ is given by a type $Ob$, the morphisms family $Mor$, the family of composition functions and the family of the identity morphisms followed by an infinite sequence of families of equalities representing the higher associativity and identity axioms. For such an object $\cal C$, we have finite contexts $ch_i(pt,{\cal C})$ but not $ft({\cal C})$. 

In general, for every C-system $CC$ there is a category $\wh{CC}$ whose objects are objects of $CC$ together with extra objects, the set of which we can denote by $\wh{CC}_{\infty}$, which are infinite sequences $X_1,\dots,X_n,\dots$ where $X_i\in CC$ and $X_i=ft(X_{i+1})$. We can define morphisms between objects of $\wh{CC}$ using the usual definition of morphisms between pro-objects. 

Since their objects are connected with with structures that involve infinite sequences of ``coherence'' conditions as well as with certain kinds of co-inductive types categories $\wh{CC}$ deserve further study. 
\end{remark}
If $X\ge Y$ we will write $p(X,Y)$ for the composition of the p-morphisms going from $X$ to $Y$ that was previously denoted $p_{X,n}$ where $n=l(X)-l(Y)$. It follows immediately from its definition that 
\begin{eq}
\llabel{2016.09.17.eq1}
p(X,X)=Id_X\,\,{\rm and}\,\, p(X,Y)=p_X\circ p(ft(X),Y)\,\, {\rm for}\,\, X>Y
\end{eq}
If $X\ge Y$ and $Y\ge \Gamma$ then one has
\begin{eq}
\llabel{2015.06.11.eq4}
p(X,\Gamma)=p(X,Y)\circ p(Y,\Gamma)
\end{eq}
This is proved with Lemma \ref{2016.09.17.l1} by fixing $\Gamma$ and $Y$ and setting $P$ to be the set of $X\ge Y$ for which (\ref{2015.06.11.eq4}) holds. The assumptions of the lemma follow from (\ref{2016.09.17.eq1}). 

If $X\ge \Gamma$ and $f:\Gamma'\sr \Gamma$ is a morphism we will write $f^*(X)$ for what was previously denoted $f^*(X,n)$ where $n=l(X)-l(\Gamma)$ and
$$q(f,X):f^*(X)\sr X$$
for what was previously denoted by $q(f,X,n)$. It follows immediately from the definitions that
\begin{eq}
\llabel{2016.08.18.eq5}
f^*(\Gamma)=\Gamma'\,\,{\rm and}\,\,f^*(X)=q(f,ft(X))^*(X)\,\, {\rm for}\,\, X>\Gamma
\end{eq}
and
\begin{eq}
\llabel{2016.09.17.eq2}
q(f,\Gamma)=f\,\,{\rm and}\,\,q(f,X)=q(q(f,ft(X)),X)\,\, {\rm for}\,\, X>\Gamma
\end{eq}
The second half of (\ref{2016.08.18.eq5}) implies that for $X>\Gamma$ one has
\begin{eq}
\llabel{2016.05.06.eq1}
ft(f^*(X))=f^*(ft(X))
\end{eq}
\begin{lemma}
\llabel{2015.06.15.l1}
For any $X$ and $f$ as above $f^*(X)$ is an object over $\Gamma'$,
\begin{eq}\llabel{2016.04.30.eq2}
l(f^*(X))-l(\Gamma')=l(X)-l(\Gamma)
\end{eq}
and
\begin{eq}\llabel{2015.06.11.sq2}
\begin{CD}
f^*(X) @>q(f,X)>> X\\
@Vp(f^*(X),\Gamma')VV @VVp(X,\Gamma)V\\
\Gamma' @>f>> \Gamma
\end{CD}
\end{eq}
is a pullback.
\end{lemma}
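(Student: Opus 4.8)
The plan is to fix $\Gamma$ and the morphism $f:\Gamma'\to\Gamma$ and to prove all three assertions simultaneously by the induction principle of Lemma \ref{2016.09.17.l1}, taking $P$ to be the set of those $X\ge\Gamma$ for which $f^*(X)\ge\Gamma'$, the length identity (\ref{2016.04.30.eq2}) holds, and the square (\ref{2015.06.11.sq2}) is a pullback. For the base case $X=\Gamma$ one has $f^*(\Gamma)=\Gamma'$ by the first half of (\ref{2016.08.18.eq5}), so $f^*(\Gamma)\ge\Gamma'$ reflexively and $l(f^*(\Gamma))-l(\Gamma')=0=l(\Gamma)-l(\Gamma)$; by the first halves of (\ref{2016.09.17.eq2}) and (\ref{2016.09.17.eq1}) the arrow $q(f,\Gamma)$ equals $f$ while $p(f^*(\Gamma),\Gamma')=Id_{\Gamma'}$ and $p(\Gamma,\Gamma)=Id_\Gamma$, so (\ref{2015.06.11.sq2}) degenerates to the square with $f$ along top and bottom and identities on the sides, which is trivially a pullback.

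For the inductive step, assume $X>\Gamma$ and $ft(X)\in P$. By the second half of (\ref{2016.08.18.eq5}) we have $f^*(X)=q(f,ft(X))^*(X)$, and since $l(X)-l(ft(X))=1$ this is an instance of the one-step operation built into the C-system structure; the defining property of a C-system then supplies a pullback square with corners $f^*(X),X,f^*(ft(X)),ft(X)$, top arrow $q(q(f,ft(X)),X)$, bottom arrow $q(f,ft(X))$, left arrow $p_{f^*(X)}$ and right arrow $p_X$, and moreover $ft(f^*(X))=f^*(ft(X))$, which is exactly (\ref{2016.05.06.eq1}). From $ft(f^*(X))=f^*(ft(X))\ge\Gamma'$ (inductive hypothesis) together with $l(f^*(X))=l(f^*(ft(X)))+1$ one deduces $f^*(X)>f^*(ft(X))\ge\Gamma'$, hence $f^*(X)\ge\Gamma'$, and $l(f^*(X))-l(\Gamma')=(l(f^*(ft(X)))-l(\Gamma'))+1=(l(ft(X))-l(\Gamma))+1=l(X)-l(\Gamma)$.

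It remains to identify the square (\ref{2015.06.11.sq2}) for $X$ as a pullback. I would paste the one-step pullback square just obtained on top of the pullback square (\ref{2015.06.11.sq2}) for $ft(X)$, which holds because $ft(X)\in P$; the pasting lemma for pullbacks then gives that the outer rectangle is a pullback. Finally one recognizes this rectangle as (\ref{2015.06.11.sq2}) for $X$: its top arrow is $q(q(f,ft(X)),X)=q(f,X)$ by the second half of (\ref{2016.09.17.eq2}); its left vertical is $p_{f^*(X)}\circ p(f^*(ft(X)),\Gamma')=p_{f^*(X)}\circ p(ft(f^*(X)),\Gamma')=p(f^*(X),\Gamma')$ by (\ref{2016.05.06.eq1}) and the second half of (\ref{2016.09.17.eq1}); and its right vertical is $p_X\circ p(ft(X),\Gamma)=p(X,\Gamma)$ again by (\ref{2016.09.17.eq1}). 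This closes the induction, and Lemma \ref{2016.09.17.l1} yields the statement for all $X\ge\Gamma$.

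I expect the only real work to be the bookkeeping in the inductive step — matching the arrows of the pasted rectangle with those appearing in (\ref{2015.06.11.sq2}) through the recursive identities for $p(-,-)$ and $q(-,-)$, and pinning down precisely which clause of the C-system axioms delivers the one-step pullback. There is no genuine conceptual obstacle here, since the pasting lemma for pullbacks does all the categorical work and the induction principle of Lemma \ref{2016.09.17.l1} organizes the recursion cleanly.
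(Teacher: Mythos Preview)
Your proposal is correct and follows exactly the approach the paper takes: the paper's own proof simply says to use the induction principle of Lemma \ref{2016.09.17.l1} for all three assertions, invoking for the third the fact that canonical squares are pullbacks and that the vertical composition of two pullbacks is a pullback. You have written out in full detail precisely what the paper leaves as a sketch, with the bookkeeping handled correctly.
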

\begin{proof}
Each of the three assertions is proved easily using Lemma \ref{2016.09.17.l1}. In the case of the third assertion one has to apply the facts that the canonical squares of a C-system are pullbacks and that the vertical composition of two pullbacks is a pullback. 
\end{proof}
A detailed definition of a homomorphism of C-systems is given in \cite[Definition 3.1]{Cfromauniverse}. 
\begin{lemma}
\llabel{2016.09.11.l2}
Let $H:CC'\sr CC$ be a homomorphism of C-systems. Then:
\begin{enumerate}
\item For $X\ge \Gamma$ in $CC'$ one has $H(X)\ge H(\Gamma)$ and 
$$H(p(X,\Gamma))=p(H(X),H(\Gamma))$$
\item For $X\ge \Gamma$ and $f:\Gamma'\sr\Gamma$ in $CC'$ one has
$$H(f^*(X))=H(f)^*(H(X))$$
$$H(q(f,X))=q(H(f),H(X))$$
\end{enumerate}
\end{lemma}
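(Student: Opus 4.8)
The plan is to reduce both parts to the single-step case, which is built into the definition of a homomorphism of C-systems (see \cite[Definition 3.1]{Cfromauniverse}), and then to propagate the single-step case up the tower of an object by means of the induction principle of Lemma \ref{2016.09.17.l1}. Recall that a homomorphism $H$ preserves the length function, commutes with $ft$, preserves identities and composition, sends each $p_X$ to $p_{H(X)}$, and carries the canonical squares of $CC'$ to the canonical squares of $CC$; concretely, for any morphism $g:Y'\sr Y$ in $CC'$ and any object $Z$ with $ft(Z)=Y$ and $l(Z)=l(Y)+1$ one has $H(g^*(Z))=H(g)^*(H(Z))$ and $H(q(g,Z))=q(H(g),H(Z))$. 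The very first thing to check is that the hypothesis $X\ge\Gamma$ is preserved: from $\Gamma=ft^{l(X)-l(\Gamma)}(X)$ one gets, applying $H$ and using that $H$ commutes with $ft$ and preserves $l$, that $H(\Gamma)=ft^{l(H(X))-l(H(\Gamma))}(H(X))$, hence $H(X)\ge H(\Gamma)$; and if moreover $X>\Gamma$ then $l(H(X))>l(H(\Gamma))$, so $H(X)>H(\Gamma)$. This makes the two displayed equations of the lemma statements about morphisms that genuinely exist.

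For part (1), I would fix $\Gamma$ and let $P$ be the set of $X\ge\Gamma$ for which $H(p(X,\Gamma))=p(H(X),H(\Gamma))$. When $X=\Gamma$ both sides are identity morphisms, by the first half of (\ref{2016.09.17.eq1}) and because $H$ preserves identities, so $\Gamma\in P$. If $X>\Gamma$ and $ft(X)\in P$, then by the second half of (\ref{2016.09.17.eq1}), the functoriality of $H$, the identity $H(p_X)=p_{H(X)}$, the inductive assumption $ft(X)\in P$, and $H(ft(X))=ft(H(X))$, we get
$$H(p(X,\Gamma))=H(p_X)\circ H(p(ft(X),\Gamma))=p_{H(X)}\circ p(ft(H(X)),H(\Gamma))=p(H(X),H(\Gamma)),$$
where the last equality is (\ref{2016.09.17.eq1}) applied in $CC$ to $H(X)>H(\Gamma)$. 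Lemma \ref{2016.09.17.l1} then gives the identity for all $X\ge\Gamma$.

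For part (2), I would again fix $f:\Gamma'\sr\Gamma$ and let $P$ be the set of $X\ge\Gamma$ for which both $H(f^*(X))=H(f)^*(H(X))$ and $H(q(f,X))=q(H(f),H(X))$ hold. For $X=\Gamma$ this follows from the first halves of (\ref{2016.08.18.eq5}) and (\ref{2016.09.17.eq2}), used both in $CC'$ and in $CC$, together with the fact that $H$ sends the domain of $f$ to the domain of $H(f)$. For the inductive step, take $X>\Gamma$ with $ft(X)\in P$ and set $g=q(f,ft(X))$, a morphism with codomain $ft(X)$; since $l(X)=l(ft(X))+1$, the single-step preservation clause of the definition of a homomorphism applies to $g$ and $X$, giving $H(g^*(X))=H(g)^*(H(X))$ and $H(q(g,X))=q(H(g),H(X))$. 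By the inductive assumption, $H(g)=q(H(f),H(ft(X)))=q(H(f),ft(H(X)))$. Combining this with the second halves of (\ref{2016.08.18.eq5}) and (\ref{2016.09.17.eq2}) in $CC'$, which rewrite $f^*(X)=g^*(X)$ and $q(f,X)=q(g,X)$, and with the second halves of (\ref{2016.08.18.eq5}) and (\ref{2016.09.17.eq2}) in $CC$, which rewrite $H(f)^*(H(X))=q(H(f),ft(H(X)))^*(H(X))$ and $q(H(f),H(X))=q(q(H(f),ft(H(X))),H(X))$, we obtain $H(f^*(X))=H(f)^*(H(X))$ and $H(q(f,X))=q(H(f),H(X))$, so $X\in P$. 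Lemma \ref{2016.09.17.l1} completes the argument.

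Everything here is routine; the one place that calls for care is the inductive step of part (2), where one must make sure the single-step clause in the definition of a homomorphism is being invoked for the correct morphism and object — namely for $g=q(f,ft(X))$, which has codomain $ft(X)$, and for the object $X$ sitting one level above it — and then thread this through the recursions (\ref{2016.08.18.eq5}) and (\ref{2016.09.17.eq2}) in both C-systems, keeping in mind $ft(H(X))=H(ft(X))$. I expect this bookkeeping to be the main, and only mild, obstacle.
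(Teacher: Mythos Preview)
Your proposal is correct and follows exactly the approach the paper indicates: induction via Lemma \ref{2016.09.17.l1}, with the base and step cases discharged by the defining clauses of a C-system homomorphism (preservation of $l$, $ft$, identities, composition, $p$-morphisms, and the single-step $f^*$ and $q$). The paper's proof is a one-line sketch of precisely this; you have merely spelled out the bookkeeping, including the point you flagged about applying the single-step clause to $g=q(f,ft(X))$ in part (2), which is exactly right.
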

\begin{proof}
The proofs of all three assertions are through Lemma \ref{2016.09.17.l1} using the fact that homomorphisms of C-systems take $p$-morphisms to $p$-morphisms, respect $f^*$ on objects and take $q$-morphisms to $q$-morphisms.
\end{proof}
\begin{lemma}
\llabel{2015.06.11.l2}
For all $\Gamma$ and all $X\ge \Gamma$ one has:
\begin{enumerate}
\item $Id_{\Gamma}^*(X)=X$ and $q(Id_{\Gamma},X)=Id_X$,
\item if $f:\Gamma'\sr \Gamma$, $g:\Gamma''\sr \Gamma'$ are two morphisms then 
$$(g\circ f)^*(X)=g^*(f^*(X))$$
and 
$$q(g\circ f, X)=q(g, f^*(X))\circ q(f,X)$$
\end{enumerate}
\end{lemma}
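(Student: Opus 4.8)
The plan is to prove both assertions by the induction principle of Lemma \ref{2016.09.17.l1}, fixing all the data except $X$ and letting $P$ be the set of objects $X \ge \Gamma$ for which the desired equalities hold. For part (1), fix $\Gamma$ and let $P = \{X \ge \Gamma \mid Id_\Gamma^*(X) = X \text{ and } q(Id_\Gamma, X) = Id_X\}$. The base case $X = \Gamma$ is immediate from the first halves of (\ref{2016.08.18.eq5}) and (\ref{2016.09.17.eq2}), which give $Id_\Gamma^*(\Gamma) = \Gamma$ and $q(Id_\Gamma, \Gamma) = Id_\Gamma$. For the inductive step, suppose $X > \Gamma$ and $ft(X) \in P$, so $Id_\Gamma^*(ft(X)) = ft(X)$ and $q(Id_\Gamma, ft(X)) = Id_{ft(X)}$. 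Then by the second half of (\ref{2016.08.18.eq5}), $Id_\Gamma^*(X) = q(Id_\Gamma, ft(X))^*(X) = Id_{ft(X)}^*(X)$, and one needs the instance $Id_{ft(X)}^*(X) = X$ for the canonical case — this is part of the C-system axioms (the canonical square over the identity is the trivial one) and should already be available; similarly (\ref{2016.09.17.eq2}) gives $q(Id_\Gamma, X) = q(q(Id_\Gamma, ft(X)), X) = q(Id_{ft(X)}, X) = Id_X$.

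For part (2), fix $\Gamma$, $\Gamma'$, $\Gamma''$, $f: \Gamma' \to \Gamma$ and $g: \Gamma'' \to \Gamma'$, and let $P$ be the set of $X \ge \Gamma$ for which both $(g \circ f)^*(X) = g^*(f^*(X))$ and $q(g\circ f, X) = q(g, f^*(X)) \circ q(f, X)$ hold. The base case $X = \Gamma$: the first halves of (\ref{2016.08.18.eq5}) and (\ref{2016.09.17.eq2}) give $(g\circ f)^*(\Gamma) = \Gamma''$, $g^*(f^*(\Gamma)) = g^*(\Gamma') = \Gamma''$, and $q(g\circ f, \Gamma) = g \circ f = q(g, \Gamma') \circ q(f, \Gamma) = q(g, f^*(\Gamma)) \circ q(f, \Gamma)$. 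For the inductive step, assume $X > \Gamma$ and $ft(X) \in P$. Repeatedly unfolding the recursive clauses (\ref{2016.08.18.eq5}) and (\ref{2016.09.17.eq2}), one gets $(g \circ f)^*(X) = q(g\circ f, ft(X))^*(X)$, while $g^*(f^*(X)) = g^*(q(f, ft(X))^*(X)) = q(g, q(f, ft(X))^*(ft(X)) = q(f^*(ft(X)))\,)^*(X)$ after using (\ref{2016.05.06.eq1}), and the inductive hypothesis on $ft(X)$ identifies the two composed $q$-morphisms being applied to $X$. The object equality then follows, and the $q$-morphism equality follows by applying $q(-, X)$ and using (\ref{2016.09.17.eq2}) together with functoriality of $q$ in its first argument at the level $ft(X)$ (again the inductive hypothesis).

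The main obstacle I expect is bookkeeping rather than a conceptual difficulty: carefully matching up the nested expressions that arise from iterating the two-clause recursions for $f^*$ and $q(f,-)$, and making sure that the "canonical case" instances (such as $Id_A^*(X) = X$ and $q(Id_A, X) = Id_X$ for $X > A$ with $l(X) = l(A)+1$, and the compatibility of $q$ with composition at the canonical level) are exactly the C-system axioms or their immediate consequences. In a formalization-ready treatment these canonical-case facts must be cited explicitly from the definition of a C-system in \cite{Csubsystems}; once they are in hand, Lemma \ref{2016.09.17.l1} reduces everything to a single inductive step in each part. No use of excluded middle or choice is needed, so the argument is constructive as required.
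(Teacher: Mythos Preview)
Your proposal is correct and follows exactly the approach the paper indicates: the paper's proof reads simply ``The proofs of all assertions are through Lemma \ref{2016.09.17.l1} using the axioms of a C-system,'' and you have unpacked precisely this, supplying the base cases via the first halves of (\ref{2016.08.18.eq5}) and (\ref{2016.09.17.eq2}) and the inductive steps via their second halves together with the one-step C-system axioms for $Id$ and composition. Your only blemish is notational---the expression in the inductive step of part (2) is garbled as written---but the intended computation is clear and correct.
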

\begin{proof}
The proofs of all assertions are through Lemma \ref{2016.09.17.l1} using the axioms of a C-system.
\end{proof}
\begin{lemma}
\llabel{2016.09.17.l2}
If $X\ge Y\ge \Gamma$ and $f:\Gamma'\sr \Gamma$ then one has
\begin{eq}
\llabel{2015.06.11.eq5}
f^*(X)=q(f,Y)^*(X)
\end{eq}
and
\begin{eq}
\llabel{2015.06.11.eq6}
q(f,X)=q(q(f,Y),X)
\end{eq}
\end{lemma}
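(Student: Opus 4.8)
The plan is to prove Lemma \ref{2016.09.17.l2} by the induction principle of Lemma \ref{2016.09.17.l1}, fixing $\Gamma$, $Y$ and $f$ and letting $P$ be the set of those $X \ge Y$ for which both (\ref{2015.06.11.eq5}) and (\ref{2015.06.11.eq6}) hold. We must check the two hypotheses of Lemma \ref{2016.09.17.l1}. For the base case $X = Y$: the left-hand side of (\ref{2015.06.11.eq5}) is $f^*(Y)$, and since $Y \ge \Gamma$ this is an object over $\Gamma'$ with $q(f,Y) : f^*(Y) \sr Y$; the right-hand side is $q(f,Y)^*(Y)$, which by the first clause of (\ref{2016.08.18.eq5}) (applied with the morphism $q(f,Y)$ whose target is $Y$) equals $f^*(Y)$ as well, using $l(Y) - l(Y) = 0$. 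Similarly (\ref{2015.06.11.eq6}) for $X = Y$ reads $q(f,Y) = q(q(f,Y),Y)$, which is the first clause of (\ref{2016.09.17.eq2}) applied to the morphism $q(f,Y)$.

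For the inductive step, suppose $X > Y$ and $ft(X) \in P$, so that $f^*(ft(X)) = q(f,Y)^*(ft(X))$ and $q(f,ft(X)) = q(q(f,Y),ft(X))$. We want $f^*(X) = q(f,Y)^*(X)$. Expanding the left side using the second clause of (\ref{2016.08.18.eq5}), which applies since $X > \Gamma$ (as $X > Y \ge \Gamma$), gives $f^*(X) = q(f,ft(X))^*(X)$. Expanding the right side similarly: since $q(f,Y) : f^*(Y) \sr Y$ and $X > Y$, the second clause of (\ref{2016.08.18.eq5}) gives $q(f,Y)^*(X) = q(q(f,Y), ft(X))^*(X)$. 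By the inductive hypothesis on the $q$-morphisms, $q(f,ft(X)) = q(q(f,Y), ft(X))$, so the two expressions agree. For (\ref{2015.06.11.eq6}), expand $q(f,X)$ by the second clause of (\ref{2016.09.17.eq2}) to get $q(q(f,ft(X)), X)$, and expand $q(q(f,Y),X)$ by the same clause (applicable since $X > Y$) to get $q(q(q(f,Y), ft(X)), X)$; these agree by the inductive hypothesis. Hence $X \in P$, and Lemma \ref{2016.09.17.l1} yields the claim for all $X \ge Y$.

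The only point that requires a little care — and the step I would flag as the main obstacle — is making sure the "child-step" recursions (\ref{2016.08.18.eq5}) and (\ref{2016.09.17.eq2}) are being invoked with the right morphism and the right ambient object. On the right-hand sides the relevant morphism is not $f : \Gamma' \sr \Gamma$ but $q(f,Y) : f^*(Y) \sr Y$, and the object $X$ is viewed as lying over $Y$ rather than over $\Gamma$; one must confirm that $X > Y$ translates into the hypothesis "$X > $ (source-base of $q(f,Y)$'s target)" needed to apply the recursive clauses, which it does since those clauses are stated for any object strictly above the base. Once this bookkeeping is set up, both equalities reduce to the inductive hypotheses by matching terms, with no genuine computation. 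It is also worth noting that Lemma \ref{2015.06.15.l1} guarantees $f^*(Y)$ is an object over $\Gamma'$ and $l(f^*(Y)) - l(\Gamma') = l(Y) - l(\Gamma)$, so all the length comparisons implicit in applying $(-)^*$ and $q(-,-)$ are legitimate.
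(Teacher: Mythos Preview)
Your proof is correct and follows essentially the same approach as the paper: both prove the two equalities simultaneously via the induction principle of Lemma \ref{2016.09.17.l1}, taking $P$ to be the set of $X\ge Y$ for which (\ref{2015.06.11.eq5}) and (\ref{2015.06.11.eq6}) hold, establishing the base case from the first halves of (\ref{2016.08.18.eq5}) and (\ref{2016.09.17.eq2}), and the inductive step from their second halves together with the inductive hypothesis (\ref{2015.06.11.eq6}) for $ft(X)$. Your explicit remark about applying the recursive clauses with the morphism $q(f,Y)$ rather than $f$ makes transparent exactly the bookkeeping the paper leaves implicit.
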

\begin{proof}
One proves both statements simultaneously through Lemma \ref{2016.09.17.l1}. One fixes $\Gamma$ and $Y$ and sets $P$ to be the set of $X\ge Y$ such that (\ref{2015.06.11.eq5}) and (\ref{2015.06.11.eq6}) hold for $X$. One has $Y\in P$ by the first halves of (\ref{2016.08.18.eq5}) and (\ref{2016.09.17.eq2}). If $X>Y$ and $ft(X)\in P$ then $X>\Gamma$ and 
$$f^*(X)=q(f,ft(X))^*(X)=q(q(f,Y),ft(X))^*(X)=q(f,Y)^*(X)$$
where the first and the third equalities are by the second half of (\ref{2016.08.18.eq5}) and the second equality is by (\ref{2015.06.11.eq6}) for $ft(X)$. 

Similarly 
$$q(f,X)=q(q(f,ft(X)),X)=q(q(q(f,Y),ft(X)),X)=q(q(f,Y),X)$$
where the first and the third equalities are by the second half of (\ref{2016.09.17.eq2}) and the second equality is by (\ref{2015.06.11.eq6}) for $ft(X)$. 

This proves the second assumption of Lemma \ref{2016.09.17.l1} and completes the proof of our lemma.
\end{proof}
\begin{remark}\rm\llabel{2016.08.18.rem1}
Equations (\ref{2015.06.11.eq5}) and (\ref{2015.06.11.eq6}) are the subject of \cite[Lemma 14.1, p.240]{Cartmell1} and \cite[Lemma 1, p.2.14]{Cartmell0}. Some other constructions and lemmas of our text are used as given in the following few paragraphs of \cite{Cartmell1}. A few more results are stated and proved in \cite{Cartmell0}, which is unfortunately not published at this time. 
\end{remark}
The first assertion of Lemma \ref{2015.06.15.l1} together with (\ref{2015.06.11.eq5}) implies that if $X\ge Y\ge \Gamma$ and $f:\Gamma'\sr \Gamma$ then 
\begin{eq}
\llabel{2016.08.18.eq4}
f^*(X)\ge f^*(Y)
\end{eq}
\begin{lemma}
\llabel{2016.09.21.l3}
If $X\ge Y\ge \Gamma$ and $f:\Gamma'\sr \Gamma$ then the square
\begin{eq}
\llabel{2016.09.21.eq2}
\begin{CD}
f^*(X) @>q(f,X)>> X\\
@Vp(f^*(X),f^*(Y))VV @VVp(X,Y)V\\
f^*(Y) @>q(f,Y)>> Y
\end{CD}
\end{eq}
where the left vertical arrow is defined by (\ref{2016.08.18.eq4}), is a pullback.
\end{lemma}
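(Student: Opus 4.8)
The plan is to prove that the square (\ref{2016.09.21.eq2}) is a pullback by using Lemma \ref{2016.09.17.l1} to reduce to the case where $X$ is directly above $Y$, and then to exhibit the square as a composite of squares already known to be pullbacks. First I would fix $Y$, $\Gamma$, and $f$, and let $P$ be the set of $X\ge Y$ for which the square (\ref{2016.09.21.eq2}) is a pullback. For $X=Y$ the square has identity morphisms along the left and right edges and $q(f,Y)$ along both horizontal edges, since $f^*(Y)=f^*(Y)$ and $p(f^*(Y),f^*(Y))=Id$, $p(Y,Y)=Id$ by (\ref{2016.09.17.eq1}); such a square is trivially a pullback, so $Y\in P$.

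For the inductive step suppose $X>Y$ with $ft(X)\in P$. Then $X>\Gamma$ as well, so by (\ref{2016.08.18.eq5}) we have $f^*(X)=q(f,ft(X))^*(X)$, and by (\ref{2015.06.11.eq6}) (Lemma \ref{2016.09.17.l2}) applied to $X\ge ft(X)\ge\Gamma$ we have $q(f,X)=q(q(f,ft(X)),X)$. The key observation is that the square (\ref{2016.09.21.eq2}) for $X$ decomposes vertically: its top portion is the canonical pullback square of the C-system attached to $X$ and the morphism $q(f,ft(X)):f^*(ft(X))\sr ft(X)$, namely the square with vertices $f^*(X)$, $X$, $f^*(ft(X))$, $ft(X)$, top arrow $q(f,X)=q(q(f,ft(X)),X)$, bottom arrow $q(f,ft(X))$, and vertical $p$-morphisms $p_{f^*(X)}$ and $p_X$; its bottom portion is the square (\ref{2016.09.21.eq2}) for the pair $ft(X)\ge Y$ under $f$, which is a pullback because $ft(X)\in P$. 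To glue them I would use (\ref{2016.09.17.eq1}) to see that the left edge $p(f^*(X),f^*(Y))$ factors as $p_{f^*(X)}\circ p(f^*(ft(X)),f^*(Y))$ — here I use (\ref{2016.05.06.eq1}), $ft(f^*(X))=f^*(ft(X))$, to make sense of this — and similarly the right edge $p(X,Y)=p_X\circ p(ft(X),Y)$. Then the vertical composition of two pullbacks is a pullback, which gives $X\in P$ and completes the induction via Lemma \ref{2016.09.17.l1}.

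The main obstacle is bookkeeping rather than conceptual: one must check carefully that the top portion of the decomposed square really is a canonical square of the C-system (equivalently, one of the already-established pullback squares of Lemma \ref{2015.06.15.l1} in the length-one case, since $X$ is directly above $ft(X)$) and that the identifications $f^*(X)=q(f,ft(X))^*(X)$, $ft(f^*(X))=f^*(ft(X))$, and $q(f,X)=q(q(f,ft(X)),X)$ line up the edges of the two subsquares exactly so the composite is the square we want. I would also double-check that the left vertical arrow $p(f^*(X),f^*(Y))$, which is defined in the statement via (\ref{2016.08.18.eq4}), agrees with the composite $p_{f^*(X)}\circ p(f^*(ft(X)),f^*(Y))$; this follows from (\ref{2015.06.11.eq4}) applied to $f^*(X)\ge f^*(ft(X))\ge f^*(Y)$, the chain of ``over'' relations being supplied by (\ref{2016.08.18.eq4}) together with (\ref{2016.08.18.eq4}) applied again, or simply by (\ref{2016.09.17.eq1}). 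Once these identifications are in place, no genuine calculation remains — the result is the pasting lemma for pullbacks applied to a canonical square stacked on an inductively-obtained pullback.
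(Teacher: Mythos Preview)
Your induction is correct, but the paper takes a much shorter route. It simply invokes Lemma \ref{2016.09.17.l2}: the equalities $f^*(X)=q(f,Y)^*(X)$ and $q(f,X)=q(q(f,Y),X)$ show that the square (\ref{2016.09.21.eq2}) is literally the square (\ref{2015.06.11.sq2}) of Lemma \ref{2015.06.15.l1}, applied with base morphism $q(f,Y):f^*(Y)\sr Y$ and object $X\ge Y$. Since that lemma already proved (by the same induction you carry out) that such a square is a pullback, the result follows in one line.

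What your approach does differently is redo the inductive stacking of canonical squares that already went into the proof of Lemma \ref{2015.06.15.l1}. This is sound and self-contained, but it duplicates work; the paper's argument buys brevity by recognizing that Lemma \ref{2016.09.17.l2} collapses the general relative case $X\ge Y\ge\Gamma$ down to an instance of the absolute case $X\ge Y$ already handled.
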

\begin{proof}
By Lemma \ref{2016.09.17.l2} we have $f^*(X)=q(f,Y)^*(X)$ and $q(f,X)=q(q(f,Y),X)$. Therefore, our square coincides with the square of Lemma \ref{2015.06.15.l1} and is a pullback according to this lemma.
\end{proof}

If $X$ and $Y$ are objects over $\Gamma$ let
$$X\times_{\Gamma}Y=p(X,\Gamma)^*(Y)$$
Lemma \ref{2015.06.15.l1} shows that $X\times_{\Gamma} Y$ is the fiber product of $X$ and $Y$ over $\Gamma$ with the projections $p(X\times Y,X)$ and $q(p(X,\Gamma),Y)$.

The same lemma shows that
\begin{eq}\llabel{2016.06.17.eq3}
l(X\times_{\Gamma} Y)=l(X)+l(Y)-l(\Gamma)
\end{eq}
The product $X\times_{\Gamma} Y$ is an object over $X$ and therefore an object over $\Gamma$:
$$X\times_{\Gamma} Y \ge X\ge \Gamma$$
Note that $X\times_{\Gamma} Y$ is not, in general, an object over $Y$. 

We have two pullbacks
\begin{eq}
\label{2016.09.15.eq1}
\begin{CD}
X\times_{\Gamma} Y @>q(p(X,\Gamma),Y)>> Y\\
@Vp(p(X,\Gamma)^*(Y),X) VV @VVp(Y,\Gamma) V\\
X @>p(X,\Gamma)>> \Gamma
\end{CD}
\spc\spc\spc\spc\spc\spc
\begin{CD}
Y\times_{\Gamma} X @>q(p(Y,\Gamma),X)>> X\\
@Vp(p(Y,\Gamma)^*(X),Y) VV @VVp(X,\Gamma) V\\
Y @>p(Y,\Gamma)>> \Gamma
\end{CD}
\end{eq}
Applying Lemma \ref{2016.05.18.l2} and the construction preceding it to these squares we obtain an  isomorphism 
\begin{eq}\llabel{2016.05.18.eq3}
exch(X,Y;\Gamma):X\times_{\Gamma} Y\sr Y\times_{\Gamma} X
\end{eq}
with the inverse given by $exch(Y,X;\Gamma)$, that is,
\begin{eq}\llabel{2016.05.20.eq3}
\begin{CD}
exch(X,Y;\Gamma)\circ exch(Y,X;\Gamma)=Id_{X\times_{\Gamma} Y}\\
exch(Y,X;\Gamma)\circ exch(X,Y;\Gamma)=Id_{Y\times_{\Gamma} X}
\end{CD}
\end{eq}
This isomorphism is uniquely determined by two equalities
\begin{eq}\llabel{2016.06.18.eq6}
\begin{CD}
exch(X,Y;\Gamma)\circ q(p(Y,\Gamma),X)=p(X\times_{\Gamma} Y,X)\\
exch(X,Y;\Gamma)\circ p(Y\times_{\Gamma} X,Y)=q(p(X,\Gamma),Y)
\end{CD}
\end{eq}

The equalities (\ref{2016.06.18.eq6}) imply in particular that one has 
\begin{eq}\llabel{2016.05.18.eq5}
\begin{CD}
exch(X,\Gamma;\Gamma)=Id_X\\
exch(\Gamma,Y;\Gamma)=Id_Y
\end{CD}
\end{eq}
\begin{definition}\llabel{2016.08.06.def2}
Let $CC$ be a C-system and $\Gamma\in CC$. A morphism $a:X\sr Y$ in $CC$ is called a morphism over $\Gamma$ if $X$ and $Y$ are objects over $\Gamma$ and 
$$a\circ p({Y,\Gamma})=p({X,\Gamma})$$
\end{definition}
\begin{lemma}\llabel{2016.08.14.l3}
One has:
\begin{enumerate}
\item If $X\ge \Gamma$ then $Id_X$ is a morphism over $\Gamma$.
\item If $f:X\sr Y$ and $g:Y\sr Z$ are morphisms over $\Gamma$ then $f\circ g:X\sr Z$ is a morphism over $\Gamma$,
\item if $X,Y\ge \Gamma$ then $exch(X,Y;\Gamma)$ is a morphism over $\Gamma$.
\end{enumerate}
\end{lemma}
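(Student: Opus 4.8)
The plan is to verify each of the three claims directly from the definition of ``morphism over $\Gamma$'', namely that $a\circ p(Y,\Gamma)=p(X,\Gamma)$, using the basic identities for $p$-morphisms established earlier in this section. For part (1), since $p(X,X)=Id_X$ and $p(X,\Gamma)$ composes in the expected way, the identity $Id_X\circ p(X,\Gamma)=p(X,\Gamma)$ is immediate; one only needs to note that $X\ge\Gamma$ means both source and target of $Id_X$ are over $\Gamma$, which is trivial. For part (2), given $f\circ p(Y,\Gamma)=p(X,\Gamma)$ and $g\circ p(Z,\Gamma)=p(Y,\Gamma)$, I would compute $(f\circ g)\circ p(Z,\Gamma)=f\circ(g\circ p(Z,\Gamma))=f\circ p(Y,\Gamma)=p(X,\Gamma)$, using associativity of composition; the hypotheses $X,Y,Z\ge\Gamma$ take care of the ``objects over $\Gamma$'' part of the definition. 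These two parts are essentially bookkeeping.

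The substantive part is (3): showing $exch(X,Y;\Gamma)\circ p(Y\times_\Gamma X,\Gamma)=p(X\times_\Gamma Y,\Gamma)$. Here I would use the defining equalities (\ref{2016.06.18.eq6}) for $exch(X,Y;\Gamma)$ together with the fact (from the discussion around (\ref{2015.06.11.eq4})) that $p$-morphisms compose along chains of ``is over'' relations. Since $X\times_\Gamma Y\ge X\ge\Gamma$, we have $p(X\times_\Gamma Y,\Gamma)=p(X\times_\Gamma Y,X)\circ p(X,\Gamma)$, and similarly $p(Y\times_\Gamma X,\Gamma)=p(Y\times_\Gamma X,Y)\circ p(Y,\Gamma)$. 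The plan is then to write
\[
exch(X,Y;\Gamma)\circ p(Y\times_\Gamma X,\Gamma)=exch(X,Y;\Gamma)\circ p(Y\times_\Gamma X,Y)\circ p(Y,\Gamma),
\]
apply the second equality of (\ref{2016.06.18.eq6}) to replace $exch(X,Y;\Gamma)\circ p(Y\times_\Gamma X,Y)$ by $q(p(X,\Gamma),Y)$, and then recognize that $q(p(X,\Gamma),Y)\circ p(Y,\Gamma)$ equals $p(X\times_\Gamma Y,X)\circ p(X,\Gamma)$ by the pullback square (\ref{2016.09.15.eq1}) (left square), whose commutativity says exactly $p(p(X,\Gamma)^*(Y),X)\circ p(X,\Gamma)=q(p(X,\Gamma),Y)\circ p(Y,\Gamma)$, i.e. $p(X\times_\Gamma Y,X)\circ p(X,\Gamma)=q(p(X,\Gamma),Y)\circ p(Y,\Gamma)$. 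Combining, this last expression is $p(X\times_\Gamma Y,\Gamma)$, as desired.

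The only mild obstacle I anticipate is keeping the notation for the two projections of the canonical pullback square straight — matching ``$p(p(X,\Gamma)^*(Y),X)$'' with ``$p(X\times_\Gamma Y,X)$'' and ``$q(p(X,\Gamma),Y)$'' with the second projection — and making sure the chain-composition identity (\ref{2015.06.11.eq4}) is applied with the correct intermediate object ($X$ for $X\times_\Gamma Y$, $Y$ for $Y\times_\Gamma X$), since $X\times_\Gamma Y$ is over $X$ but not in general over $Y$. Once that is set up, the computation is a short diagram chase. One could alternatively invoke Lemma \ref{2016.08.14.l3}(2) after the fact to conclude that any composition built from exchange morphisms and identities is again a morphism over $\Gamma$, but the direct verification above suffices and is self-contained.
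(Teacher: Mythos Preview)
Your proposal is correct and follows essentially the same approach as the paper: parts (1) and (2) are dismissed as straightforward, and part (3) is the chain of equalities using (\ref{2015.06.11.eq4}), the second identity in (\ref{2016.06.18.eq6}), and the commutativity of the pullback square in (\ref{2016.09.15.eq1}). Your identification of the \emph{left} square in (\ref{2016.09.15.eq1}) as the relevant one is in fact correct; the paper cites the ``second square'' there, which appears to be a slip, since it is the left square whose commutativity reads $p(X\times_\Gamma Y,X)\circ p(X,\Gamma)=q(p(X,\Gamma),Y)\circ p(Y,\Gamma)$.
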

\begin{proof}
The first and the second assertions are verified by straightforward calculation. To verify the third assertion we have
$$exch(X,Y;\Gamma)\circ p(Y\times_{\Gamma} X,\Gamma)=exch(X,Y;\Gamma)\circ p(Y\times_{\Gamma} X,Y)\circ p(Y,\Gamma)=$$$$q(p(X,\Gamma),Y)\circ p(Y,\Gamma)=p(X\times_{\Gamma} Y,X)\circ p(X,\Gamma)=p(X\times_{\Gamma} Y,\Gamma)$$
where the first equality is by (\ref{2015.06.11.eq4}), the second is by (\ref{2016.06.18.eq6}), the third by the commutativity of the second square in (\ref{2016.09.15.eq1}) and the again fourth by (\ref{2015.06.11.eq4}).
\end{proof}
\begin{lemma}\llabel{2016.08.18.l2}
If $a:X\sr Y$ is a morphism over $\Gamma$ and $\Gamma$ is an object over $\Gamma'$ then $a$ is a morphism over $\Gamma'$.
\end{lemma}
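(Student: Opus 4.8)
The plan is to unwind the definition of ``morphism over'' (Definition \ref{2016.08.06.def2}) and reduce everything to the composition law (\ref{2015.06.11.eq4}) for the $p$-morphisms. First I would record the two things that need to be checked: that $X$ and $Y$ are objects over $\Gamma'$, and that $a\circ p(Y,\Gamma')=p(X,\Gamma')$. The first is immediate: by hypothesis $X\ge\Gamma$ and $Y\ge\Gamma$, and $\Gamma\ge\Gamma'$, so $X\ge\Gamma'$ and $Y\ge\Gamma'$ by transitivity of the ``is over'' relation (noted right after Definition \ref{2016.08.06.def1}).

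For the second, since $X\ge\Gamma\ge\Gamma'$, equation (\ref{2015.06.11.eq4}) gives $p(X,\Gamma')=p(X,\Gamma)\circ p(\Gamma,\Gamma')$, and likewise $p(Y,\Gamma')=p(Y,\Gamma)\circ p(\Gamma,\Gamma')$. Then I compute
$$a\circ p(Y,\Gamma')=a\circ p(Y,\Gamma)\circ p(\Gamma,\Gamma')=p(X,\Gamma)\circ p(\Gamma,\Gamma')=p(X,\Gamma'),$$
where the middle equality is exactly the hypothesis that $a$ is a morphism over $\Gamma$. This establishes the required identity, so $a$ is a morphism over $\Gamma'$.

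There is no real obstacle here; the only thing to be careful about is invoking (\ref{2015.06.11.eq4}) with the correct chain of ``is over'' relations, which is guaranteed by the hypotheses $X\ge\Gamma$, $Y\ge\Gamma$, $\Gamma\ge\Gamma'$.
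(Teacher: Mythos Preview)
Your proof is correct and takes essentially the same approach as the paper, which simply says ``Straightforward using (\ref{2015.06.11.eq4}).'' You have supplied exactly the straightforward computation the paper omits.
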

\begin{proof}
Straightforward using (\ref{2015.06.11.eq4}).
\end{proof}
\begin{lemma}\llabel{2016.08.06.l1}
If $X,Y$ are objects over $\Gamma$, $a:X\sr Y$ is a morphism over $\Gamma$ and $f:\Gamma'\sr \Gamma$ is a morphism then there exists a unique morphism
$$f^*(a):f^*(X)\sr f^*(Y)$$
over $\Gamma'$ such that 
\begin{eq}\llabel{2015.06.11.eq7}
f^*(a)\circ q(f,Y)=q(f,X)\circ a
\end{eq}
\end{lemma}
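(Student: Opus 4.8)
The plan is to construct $f^*(a)$ explicitly using the pullback property of the square in Lemma \ref{2015.06.15.l1} and then to verify that the resulting morphism is over $\Gamma'$. First I would recall that, by Lemma \ref{2015.06.15.l1} applied to $Y \ge \Gamma$, the square with corners $f^*(Y)$, $Y$, $\Gamma'$, $\Gamma$, top arrow $q(f,Y)$, left arrow $p(f^*(Y),\Gamma')$, bottom arrow $f$ and right arrow $p(Y,\Gamma)$ is a pullback. To define $f^*(a)$ I would exhibit a cone over this pullback with vertex $f^*(X)$: the two legs are $q(f,X)\circ a : f^*(X) \sr Y$ into the top-right corner and $p(f^*(X),\Gamma') : f^*(X) \sr \Gamma'$ into the bottom-left corner. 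The compatibility condition to check is
\[
(q(f,X)\circ a)\circ p(Y,\Gamma) = p(f^*(X),\Gamma')\circ f,
\]
and this follows by computing the left side as $q(f,X)\circ (a\circ p(Y,\Gamma)) = q(f,X)\circ p(X,\Gamma)$ (since $a$ is over $\Gamma$, using Definition \ref{2016.08.06.def2}), and then $q(f,X)\circ p(X,\Gamma) = p(f^*(X),\Gamma')\circ f$ by the commutativity of the pullback square of Lemma \ref{2015.06.15.l1} for $X \ge \Gamma$. The universal property of the pullback then yields a unique morphism $f^*(a): f^*(X) \sr f^*(Y)$ with $f^*(a)\circ q(f,Y) = q(f,X)\circ a$, which is exactly (\ref{2015.06.11.eq7}), and with $f^*(a)\circ p(f^*(Y),\Gamma') = p(f^*(X),\Gamma')$.

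The second of these two equalities is precisely the statement that $f^*(a)$ is a morphism over $\Gamma'$ in the sense of Definition \ref{2016.08.06.def2}, provided we also know $f^*(X) \ge \Gamma'$ and $f^*(Y) \ge \Gamma'$; but that is the first assertion of Lemma \ref{2015.06.15.l1}. So existence of a morphism over $\Gamma'$ satisfying (\ref{2015.06.11.eq7}) is done.

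For uniqueness, suppose $b: f^*(X) \sr f^*(Y)$ is another morphism over $\Gamma'$ with $b\circ q(f,Y) = q(f,X)\circ a$. Since $b$ is over $\Gamma'$ we also have $b\circ p(f^*(Y),\Gamma') = p(f^*(X),\Gamma')$. Thus $b$ and $f^*(a)$ both sit in the same cone over the pullback square of Lemma \ref{2015.06.15.l1} (the cone with legs $q(f,X)\circ a$ and $p(f^*(X),\Gamma')$), so by the uniqueness clause of the universal property $b = f^*(a)$.

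I do not expect a serious obstacle here; the only point requiring care is making sure the two candidate legs of the cone are genuinely compatible, i.e. the displayed equation above, which is where the hypothesis that $a$ is over $\Gamma$ is used, and making sure that "morphism over $\Gamma'$" is correctly unwound into the extra equation $b\circ p(f^*(Y),\Gamma') = p(f^*(X),\Gamma')$ needed to pin down uniqueness via the pullback. One small bookkeeping remark: in invoking the pullback of Lemma \ref{2015.06.15.l1} I am implicitly using that $p(f^*(Y),\Gamma')$ and $q(f,Y)$ are indeed the projections of that pullback, which is part of the statement of that lemma, so no separate argument is needed.
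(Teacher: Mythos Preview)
Your proof is correct and is exactly the argument the paper has in mind: the paper's proof is the single sentence ``It follows from the fact that the square (\ref{2015.06.11.sq2}) is a pullback,'' and you have simply unpacked that sentence in full detail, exhibiting the cone on $f^*(X)$ with legs $q(f,X)\circ a$ and $p(f^*(X),\Gamma')$ and reading off both (\ref{2015.06.11.eq7}) and the over-$\Gamma'$ condition from the universal property.
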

\begin{proof}
It follows from the fact the the square (\ref{2015.06.11.sq2}) is a pullback.
\end{proof}
\begin{lemma}
\llabel{2016.09.11.l3}
Let $H:CC'\sr CC$ be a homomorphism of C-systems. Then one has
\begin{enumerate}
\item if $\Gamma\in CC'$ and $a:X\sr Y$ is a morphism over $\Gamma$ then $H(a)$ is a morphism over $H(\Gamma)$,
\item if $f:\Gamma'\sr \Gamma$ and $a:X\sr Y$ is a morphism over $\Gamma$ then 
$$H(f^*(a))=H(f)^*(H(a))$$
where the right hand side is defined by the first part of the lemma.
\end{enumerate}
\end{lemma}
\begin{proof}
The first assertion follows from Lemma \ref{2016.09.11.l2}(1) and the fact that $(H_{Ob},H_{Mor})$ is a functor.

To prove the second assertion one needs to verify that $H(f^*(a))$ is a morphism over $H(\Gamma')$ and that it satisfies the defining property (\ref{2015.06.11.eq7}) of $H(f)^*(H(a))$. The first fact follows from the first part of the lemma, the second from Lemma \ref{2016.09.11.l2}(2) and the fact that $(H_{Ob},H_{Mor})$ is a functor.
\end{proof}
\begin{lemma}
\llabel{2016.11.15.l1}
Let $X,Y\ge Z\ge \Gamma$, $a:X\sr Y$ a morphism over $Z$ and $f:\Gamma'\sr \Gamma$ a morphism. Then $a$ is a morphism over $\Gamma$ and one has
$$f^*(a)=q(f,Z)^*(a)$$
\end{lemma}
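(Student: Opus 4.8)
The plan is to deduce both assertions from the uniqueness clause of Lemma~\ref{2016.08.06.l1}, after first reducing the "morphism over $\Gamma$" statement to Lemma~\ref{2016.08.18.l2}.

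First, since $a$ is a morphism over $Z$ and $Z\ge \Gamma$, i.e.\ $Z$ is an object over $\Gamma$, Lemma~\ref{2016.08.18.l2} immediately gives that $a$ is a morphism over $\Gamma$. Hence $f^*(a):f^*(X)\sr f^*(Y)$ is defined via Lemma~\ref{2016.08.06.l1}, and it is the unique morphism over $\Gamma'$ satisfying $f^*(a)\circ q(f,Y)=q(f,X)\circ a$, i.e.\ equation (\ref{2015.06.11.eq7}).

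Next, apply Lemma~\ref{2016.08.06.l1} again, this time to the morphism $a$ over $Z$ and the morphism $q(f,Z):f^*(Z)\sr Z$. This produces a morphism $q(f,Z)^*(a):q(f,Z)^*(X)\sr q(f,Z)^*(Y)$ over $f^*(Z)$, characterized by $q(f,Z)^*(a)\circ q(q(f,Z),Y)=q(q(f,Z),X)\circ a$. By Lemma~\ref{2016.09.17.l2}, equations (\ref{2015.06.11.eq5}) and (\ref{2015.06.11.eq6}), applied to $X\ge Z\ge \Gamma$ and to $Y\ge Z\ge \Gamma$, one has $q(f,Z)^*(X)=f^*(X)$, $q(f,Z)^*(Y)=f^*(Y)$, $q(q(f,Z),X)=q(f,X)$ and $q(q(f,Z),Y)=q(f,Y)$. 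Substituting these identifications, $q(f,Z)^*(a)$ is a morphism $f^*(X)\sr f^*(Y)$ satisfying exactly the defining equation (\ref{2015.06.11.eq7}) of $f^*(a)$.

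It remains to check the side condition in the uniqueness statement: $f^*(a)$ is required to be a morphism over $\Gamma'$, whereas a priori $q(f,Z)^*(a)$ is only known to be a morphism over $f^*(Z)$. But $f^*(Z)$ is an object over $\Gamma'$ by Lemma~\ref{2015.06.15.l1}, so Lemma~\ref{2016.08.18.l2} upgrades $q(f,Z)^*(a)$ to a morphism over $\Gamma'$. Thus $q(f,Z)^*(a)$ is a morphism over $\Gamma'$ satisfying (\ref{2015.06.11.eq7}), and the uniqueness part of Lemma~\ref{2016.08.06.l1} forces $f^*(a)=q(f,Z)^*(a)$. I do not expect any genuine obstacle here; the only point requiring care is the bookkeeping of the various "is over" relations so that the uniqueness clause of Lemma~\ref{2016.08.06.l1} applies verbatim, and this bookkeeping is exactly what the two applications of Lemma~\ref{2016.08.18.l2} take care of.
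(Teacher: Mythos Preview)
Your proof is correct and follows essentially the same approach as the paper: verify that $q(f,Z)^*(a)$ is a morphism over $\Gamma'$ (via $f^*(Z)\ge\Gamma'$ and Lemma~\ref{2016.08.18.l2}) and that it satisfies the defining equation (\ref{2015.06.11.eq7}) of $f^*(a)$ using (\ref{2015.06.11.eq6}), then conclude by uniqueness. The paper's write-up is a bit terser---it leaves the domain/codomain identifications via (\ref{2015.06.11.eq5}) implicit---but the argument is the same.
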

\begin{proof}
We need to show that $q(f,Z)^*(a)$ is a morphism over $\Gamma'$ and that
the equality
$$q(f,Z)^*(a)\circ q(f,Y)=q(f,X)\circ a$$
By construction, $q(f,Z)^*(a)$ is a morphism over $f^*(Z)$. Since $f^*(Z)\ge \Gamma'$ it is a morphism over $\Gamma'$ by Lemma \ref{2016.08.18.l2}.

Next we have
$$q(f,Z)^*(a)\circ q(f,Y)=q(f,Z)^*(a)\circ q(q(f,Z),Y)=q(q(f,Z),X)\circ a=q(f,X)\circ a$$
where the first equality is by (\ref{2015.06.11.eq6}), the second by the definition of $q(f,Z)^*(a)$ and the third is again by (\ref{2015.06.11.eq6}). 

The lemma is proved.
\end{proof}
\begin{lemma}\llabel{2016.08.18.l1}
One has:
\begin{enumerate}
\item if $X\ge \Gamma$ then
\begin{eq}
\llabel{2016.04.25.eq3}
f^*(Id_X)=Id_{f^*(X)}
\end{eq}
where the left hand side is defined by Lemma \ref{2016.08.14.l3}.
\item if $a:X\sr Y$, $b:Y\sr Z$ are morphisms over $\Gamma$ then
\begin{eq}
\llabel{2015.06.11.eq2}
f^*(a\circ b)=f^*(a)\circ f^*(b)
\end{eq}
where the left hand side is defined by Lemma \ref{2016.08.14.l3}.
\end{enumerate}
\end{lemma}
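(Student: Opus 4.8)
The plan is to derive both assertions from the uniqueness clause of Lemma \ref{2016.08.06.l1}: once we know that a morphism over $\Gamma'$ from $f^*(X)$ to $f^*(Y)$ satisfies the defining equation (\ref{2015.06.11.eq7}) for a given morphism $c:X\sr Y$ over $\Gamma$, it must equal $f^*(c)$. So in each part I would (i) name the candidate morphism, (ii) check it is a morphism over $\Gamma'$, (iii) check it satisfies the relevant instance of (\ref{2015.06.11.eq7}), and (iv) invoke uniqueness.

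For the first assertion the candidate is $Id_{f^*(X)}$. By Lemma \ref{2015.06.15.l1} we have $f^*(X)\ge\Gamma'$, so $Id_{f^*(X)}$ is a morphism over $\Gamma'$ by Lemma \ref{2016.08.14.l3}(1). The defining equation for $f^*(Id_X)$ reads $Id_{f^*(X)}\circ q(f,X)=q(f,X)\circ Id_X$, which is trivially true. Hence $f^*(Id_X)=Id_{f^*(X)}$. For the second assertion the candidate is $f^*(a)\circ f^*(b)$. By Lemma \ref{2016.08.06.l1} the morphisms $f^*(a):f^*(X)\sr f^*(Y)$ and $f^*(b):f^*(Y)\sr f^*(Z)$ are morphisms over $\Gamma'$, so their composite is a morphism over $\Gamma'$ by Lemma \ref{2016.08.14.l3}(2). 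It then remains to verify $(f^*(a)\circ f^*(b))\circ q(f,Z)=q(f,X)\circ(a\circ b)$, which follows from a chain of associativity rewrites and two applications of (\ref{2015.06.11.eq7}): $(f^*(a)\circ f^*(b))\circ q(f,Z)=f^*(a)\circ(f^*(b)\circ q(f,Z))=f^*(a)\circ(q(f,Y)\circ b)=(f^*(a)\circ q(f,Y))\circ b=(q(f,X)\circ a)\circ b=q(f,X)\circ(a\circ b)$. Uniqueness in Lemma \ref{2016.08.06.l1} then gives $f^*(a\circ b)=f^*(a)\circ f^*(b)$.

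Since both parts reduce to an appeal to an already-established uniqueness statement, I do not anticipate a genuine obstacle. The only points that need a little care are the bookkeeping that every morphism in sight really is a morphism over $\Gamma'$, so that the uniqueness clause of Lemma \ref{2016.08.06.l1} actually applies, and keeping the diagrammatic order of composition consistent when chaining the equalities in part (2).
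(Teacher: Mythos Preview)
Your proof is correct and follows essentially the same approach as the paper: both verify that the candidate right-hand side is a morphism over $\Gamma'$ (via Lemma \ref{2016.08.14.l3}) and then check the defining equation (\ref{2015.06.11.eq7}) so that uniqueness in Lemma \ref{2016.08.06.l1} applies. Your write-up is in fact slightly more explicit than the paper's, which compresses the chain of equalities in part (2) to two steps and leaves the domain/codomain checks as ``straightforward''.
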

\begin{proof}
In each case we need to verify that the right hand side of the equality is a morphism over $\Gamma'$, that it has the same domain and codomain as the left hand side and that it satisfies the equality of the form (\ref{2015.06.11.eq7}) that characterizes the left hand side. 

In the first case, that the right hand side is a morphism over $\Gamma'$ follows from Lemma \ref{2016.08.14.l3} while the properties of the domain and codomain and the equality (\ref{2015.06.11.eq7}) are straightforward.

In the second case, that the right hand side is a morphism over $\Gamma'$ again follows from Lemma \ref{2016.08.14.l3}, the properties of the domain and codomain are again straightforward; finally for the equality (\ref{2015.06.11.eq7}) we have
$$f^*(a)\circ f^*(b)\circ q(f,Z)=f^*(a)\circ q(f,Y)\circ b=q(f,X)\circ a\circ b$$
\end{proof}
\begin{lemma}
\llabel{2016.09.21.l2}
For $X,Y\ge \Gamma$ and $f:\Gamma'\sr \Gamma$ one has
\begin{eq}
\llabel{2016.09.21.eq1}
f^*(X\times_{\Gamma} Y)=f^*(X)\times_{\Gamma'}f^*(Y)
\end{eq}
and
\begin{eq}
\llabel{2016.08.18.eq1}
f^*(exch(X,Y;\Gamma))=exch(f^*(X),f^*(Y);\Gamma')
\end{eq}
where the left hand side is defined by Lemma \ref{2016.08.14.l3} and the right hand side by Lemma \ref{2015.06.15.l1}.
\end{lemma}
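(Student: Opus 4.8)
The plan is to establish the object-level identity (\ref{2016.09.21.eq1}) first, and then to derive the morphism-level identity (\ref{2016.08.18.eq1}) from it together with the characterisation of the exchange isomorphism by the two equations (\ref{2016.06.18.eq6}).

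For (\ref{2016.09.21.eq1}): since $X\times_{\Gamma}Y = p(X,\Gamma)^*(Y)$ and $X\times_{\Gamma}Y\ge X\ge\Gamma$, equation (\ref{2015.06.11.eq5}) of Lemma \ref{2016.09.17.l2} gives $f^*(X\times_{\Gamma}Y) = q(f,X)^*(p(X,\Gamma)^*(Y))$. By the functoriality of base change on objects, Lemma \ref{2015.06.11.l2}(2), this equals $(q(f,X)\circ p(X,\Gamma))^*(Y)$; by the commutativity of the canonical square (\ref{2015.06.11.sq2}) one has $q(f,X)\circ p(X,\Gamma) = p(f^*(X),\Gamma')\circ f$, and a second application of Lemma \ref{2015.06.11.l2}(2) rewrites $(p(f^*(X),\Gamma')\circ f)^*(Y)$ as $p(f^*(X),\Gamma')^*(f^*(Y)) = f^*(X)\times_{\Gamma'}f^*(Y)$.

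For (\ref{2016.08.18.eq1}) I would first record two auxiliary identities. (a) For $W\ge Z\ge\Gamma$ one has $f^*(p(W,Z)) = p(f^*(W),f^*(Z))$: here $p(W,Z)$ is a morphism over $\Gamma$ by (\ref{2015.06.11.eq4}), so $f^*(p(W,Z))$ is characterised by Lemma \ref{2016.08.06.l1}, and $p(f^*(W),f^*(Z))$ is a morphism over $\Gamma'$ (again by (\ref{2015.06.11.eq4})) which satisfies the characterising relation (\ref{2015.06.11.eq7}) by the commutativity of the square (\ref{2016.09.21.eq2}) of Lemma \ref{2016.09.21.l3}. (b) Under the identification (\ref{2016.09.21.eq1}), $f^*(q(p(X,\Gamma),Y)) = q(p(f^*(X),\Gamma'),f^*(Y))$: the morphism $q(p(X,\Gamma),Y)$ is over $\Gamma$ by the commutativity of the left square of (\ref{2016.09.15.eq1}), so by Lemma \ref{2016.08.06.l1} it suffices to observe that $q(p(f^*(X),\Gamma'),f^*(Y))$ is over $\Gamma'$ (from the left square of (\ref{2016.09.15.eq1}) with $X,Y,\Gamma$ replaced by $f^*(X),f^*(Y),\Gamma'$, together with (\ref{2015.06.11.eq4})) and that
\[
q(p(f^*(X),\Gamma'),f^*(Y))\circ q(f,Y) \;=\; q(f,X\times_{\Gamma}Y)\circ q(p(X,\Gamma),Y).
\]
This last identity is the core computation: starting from the right-hand side, (\ref{2015.06.11.eq6}) gives $q(f,X\times_{\Gamma}Y) = q(q(f,X),X\times_{\Gamma}Y)$, then $X\times_{\Gamma}Y = p(X,\Gamma)^*(Y)$ and Lemma \ref{2015.06.11.l2}(2) combine the two $q$'s into $q(q(f,X)\circ p(X,\Gamma),Y)$, the square (\ref{2015.06.11.sq2}) replaces $q(f,X)\circ p(X,\Gamma)$ by $p(f^*(X),\Gamma')\circ f$, and one final application of Lemma \ref{2015.06.11.l2}(2) splits this back as $q(p(f^*(X),\Gamma'),f^*(Y))\circ q(f,Y)$.

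Given (a) and (b), I would finish by applying $f^*$ to each of the two equations (\ref{2016.06.18.eq6}) (all the morphisms occurring there are over $\Gamma$ by Lemma \ref{2016.08.14.l3} and the commutativity of the squares (\ref{2016.09.15.eq1}), and $f^*$ commutes with composition by Lemma \ref{2016.08.18.l1}); substituting (a), (b), and the versions of (b) with $X$ and $Y$ interchanged, together with (\ref{2016.09.21.eq1}), shows that $f^*(exch(X,Y;\Gamma))$ satisfies the two equations (\ref{2016.06.18.eq6}) with $X,Y,\Gamma$ replaced by $f^*(X),f^*(Y),\Gamma'$. Since those two equations determine a morphism into the fibre product $f^*(Y)\times_{\Gamma'}f^*(X)$ uniquely, this gives (\ref{2016.08.18.eq1}). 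The only delicate step is the displayed $q$-identity in (b): everything else is bookkeeping with functoriality of base change and with the commutativity of canonical squares, but in that step one must be careful about the diagrammatic composition order when repeatedly splitting $q$-morphisms via Lemma \ref{2015.06.11.l2}(2).
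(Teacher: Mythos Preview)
Your proof of (\ref{2016.09.21.eq1}) is essentially identical to the paper's.

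For (\ref{2016.08.18.eq1}) your argument is correct but uses the ``dual'' uniqueness to the paper's. The paper shows that $exch(f^*(X),f^*(Y);\Gamma')$ satisfies the characterising equation (\ref{2015.06.11.eq7}) that \emph{defines} $f^*(exch(X,Y;\Gamma))$, by composing both sides of (\ref{2016.09.23.eq1}) with the two projections of the pullback $Y\times_{\Gamma}X$. You instead show that $f^*(exch(X,Y;\Gamma))$ satisfies the pair of equations (\ref{2016.06.18.eq6}) that \emph{define} $exch(f^*(X),f^*(Y);\Gamma')$, by pushing $f^*$ through those equations via Lemma \ref{2016.08.18.l1}. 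To make this work you need your auxiliary identities (a) and (b); note that (a) is exactly equation (\ref{2016.04.28.eq1}) and (b) is the special case $a=p(X,\Gamma)$, $Z=Y$ of equation (\ref{2015.06.15.eq2}), both proved slightly later in Lemma \ref{2015.06.15.l2}. Your approach is arguably more transparent---it simply says ``$f^*$ respects the data that determine $exch$''---but it front-loads work that the paper postpones to the more general Lemma \ref{2015.06.15.l2}; the paper's route avoids stating (a) and (b) separately at the cost of a longer direct computation with the projections of $Y\times_{\Gamma}X$. Your displayed $q$-identity in (b) is indeed the crux, and your derivation of it via (\ref{2015.06.11.eq6}), Lemma \ref{2015.06.11.l2}(2), and (\ref{2015.06.11.sq2}) is correct.
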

\begin{proof}
For (\ref{2016.09.21.eq1}) we have
$$f^*(X\times_{\Gamma} Y)=f^*(p(X,\Gamma)^*(Y))=q(f,X)^*(p(X,\Gamma)^*(Y))=(q(f,X)\circ p(X,\Gamma))^*(Y)=$$$$(p(f^*(X),\Gamma')\circ f)^*(Y)=p(f^*(X),\Gamma')^*(f^*(Y))=f^*(X)\times_{\Gamma'}f^*(Y)$$
where the first equality is by definition of $\times_{\Gamma}$, second equality is by (\ref{2015.06.11.eq6}), the third by Lemma \ref{2015.06.11.l2}, the fourth by the commutativity of (\ref{2015.06.11.sq2}) the fifth by Lemma \ref{2015.06.11.l2} and the sixth by the definition of $\times_{\Gamma'}$. 

To prove (\ref{2016.08.18.eq1}) we need to verify that the right hand side of the equality is a morphism over $\Gamma'$, that it has the same domain and codomain as the left hand side and that it satisfies the equality of the form (\ref{2015.06.11.eq7}) corresponding to the left hand side.

That the right hand side is a morphism over $\Gamma'$ follows from Lemma \ref{2016.08.14.l3}.

That domain of the left hand side equals the domain of the right hand side follows from (\ref{2016.09.21.eq1}). The identical reasoning with $X$ and $Y$ exchanged proves that the codomains of the left hand side and the right hand side coincide. 

To complete the proof we need to show that
\begin{eq}
\llabel{2016.09.23.eq1}
exch(f^*(X),f^*(Y);\Gamma')\circ q(f,Y\times_{\Gamma} X)=q(f,X\times_{\Gamma} Y)\circ exch(X,Y;\Gamma)
\end{eq}
Both sides of this equality have domain $f^*(X)\times_{\Gamma'} f^*(Y)$ and codomain $Y\times_{\Gamma} X$. The codomain is the fiber product with the projections $p(Y\times_{\Gamma} X,Y)$ and $q(p(Y,\Gamma),X)$. Therefore it is sufficient to show that the compositions of the left and the right hand sides with these morphisms coincide. We have
$$exch(f^*(X),f^*(Y);\Gamma')\circ q(f,Y\times_{\Gamma} X)\circ  p(Y\times_{\Gamma} X,Y)=$$$$
exch(f^*(X),f^*(Y);\Gamma')\circ p(f^*(Y\times_{\Gamma} X),f^*(Y))\circ q(f,Y)=$$$$
exch(f^*(X),f^*(Y);\Gamma')\circ p(f^*(Y)\times_{\Gamma'}f^*(X),f^*(Y))\circ q(f,Y)=$$$$
q(p(f^*(X),\Gamma'),f^*(Y))\circ q(f,Y)=q(p(f^*(X),\Gamma')\circ f,Y)$$
where the first equality is by the commutativity of (\ref{2016.09.21.eq2}), the second by (\ref{2016.09.21.eq1}), the third by (\ref{2016.06.18.eq6}), the fourth by Lemma \ref{2015.06.11.l2}. 

Next we have
$$q(f,X\times_{\Gamma} Y)\circ exch(X,Y;\Gamma)\circ  p(Y\times_{\Gamma} X,Y)=
q(f,X\times_{\Gamma} Y)\circ q(p(X,\Gamma),Y)=$$$$
q(f,p(X,\Gamma)^*(Y))\circ q(p(X,\Gamma),Y)=
q(q(f,X),p(X,\Gamma)^*(Y))\circ q(p(X,\Gamma),Y)=$$$$
q(q(f,X)\circ p(X,\Gamma),Y)$$

where the first equality is by (\ref{2016.06.18.eq6}), the second by the definition of $\times_{\Gamma}$, the third by (\ref{2015.06.11.eq6}) and the fourth by Lemma \ref{2015.06.11.l2}. We conclude that (\ref{2016.09.23.eq1}) holds by the commutativity of (\ref{2015.06.11.sq2}). 
\end{proof}
\begin{lemma}
\llabel{2016.05.20.l1}
Let $\Gamma', X, Y$ be objects over $\Gamma$ and $a:X\sr Y$ a morphism over $\Gamma$. Then one has
\begin{eq}\llabel{2016.05.20.eq1}
p(\Gamma',\Gamma)^*(a)\circ exch(\Gamma',Y;\Gamma)=exch(\Gamma',X;\Gamma)\circ q(a,p(Y,\Gamma)^*(\Gamma'))
\end{eq}
and
\begin{eq}\llabel{2016.05.20.eq2}
exch(X,\Gamma';\Gamma)\circ p(\Gamma',\Gamma)^*(a)=q(a,p(Y,\Gamma)^*(\Gamma'))\circ exch(Y,\Gamma';\Gamma)
\end{eq}
\end{lemma}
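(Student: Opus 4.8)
The plan is to establish (\ref{2016.05.20.eq1}) via the universal property of a fiber product and then to deduce (\ref{2016.05.20.eq2}) from it by conjugating with the exchange isomorphisms. First I would identify the objects that appear. By the definition of $\times_{\Gamma}$ one has $p(\Gamma',\Gamma)^*(X)=\Gamma'\times_{\Gamma}X$ and $p(\Gamma',\Gamma)^*(Y)=\Gamma'\times_{\Gamma}Y$, so that $p(\Gamma',\Gamma)^*(a):\Gamma'\times_{\Gamma}X\sr\Gamma'\times_{\Gamma}Y$ is a morphism over $\Gamma'$ by Lemma \ref{2016.08.06.l1}. Likewise $p(Y,\Gamma)^*(\Gamma')=Y\times_{\Gamma}\Gamma'$; and since $a$ is a morphism over $\Gamma$, i.e.\ $a\circ p(Y,\Gamma)=p(X,\Gamma)$, Lemma \ref{2015.06.11.l2}(2) yields both $a^*(p(Y,\Gamma)^*(\Gamma'))=(a\circ p(Y,\Gamma))^*(\Gamma')=X\times_{\Gamma}\Gamma'$ and the identity $q(a,p(Y,\Gamma)^*(\Gamma'))\circ q(p(Y,\Gamma),\Gamma')=q(a\circ p(Y,\Gamma),\Gamma')=q(p(X,\Gamma),\Gamma')$. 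After these identifications both sides of (\ref{2016.05.20.eq1}) are morphisms $\Gamma'\times_{\Gamma}X\sr Y\times_{\Gamma}\Gamma'$.

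To prove (\ref{2016.05.20.eq1}) I would use that, by Lemma \ref{2015.06.15.l1}, the object $Y\times_{\Gamma}\Gamma'=p(Y,\Gamma)^*(\Gamma')$ is the fiber product of $Y$ and $\Gamma'$ over $\Gamma$ with projections $p(Y\times_{\Gamma}\Gamma',Y)$ and $q(p(Y,\Gamma),\Gamma')$; since the two projections of a pullback square are jointly monic, it is enough to check that the two sides of (\ref{2016.05.20.eq1}) become equal after composition on the right with each of these. Composing with $p(Y\times_{\Gamma}\Gamma',Y)$: on the left, the second equality of (\ref{2016.06.18.eq6}) turns $exch(\Gamma',Y;\Gamma)\circ p(Y\times_{\Gamma}\Gamma',Y)$ into $q(p(\Gamma',\Gamma),Y)$, and then the defining equality (\ref{2015.06.11.eq7}) of $p(\Gamma',\Gamma)^*(a)$ gives $q(p(\Gamma',\Gamma),X)\circ a$; on the right, the commutativity of the pullback square (\ref{2015.06.11.sq2}) for the morphism $a$ and the object $Y\times_{\Gamma}\Gamma'$ turns $q(a,Y\times_{\Gamma}\Gamma')\circ p(Y\times_{\Gamma}\Gamma',Y)$ into $p(X\times_{\Gamma}\Gamma',X)\circ a$, and a second application of the second equality of (\ref{2016.06.18.eq6}), now for $exch(\Gamma',X;\Gamma)$, produces the same $q(p(\Gamma',\Gamma),X)\circ a$. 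Composing with $q(p(Y,\Gamma),\Gamma')$: on the left, the first equality of (\ref{2016.06.18.eq6}) turns $exch(\Gamma',Y;\Gamma)\circ q(p(Y,\Gamma),\Gamma')$ into $p(\Gamma'\times_{\Gamma}Y,\Gamma')$, and the fact that $p(\Gamma',\Gamma)^*(a)$ is a morphism over $\Gamma'$ (Definition \ref{2016.08.06.def2}) then gives $p(\Gamma'\times_{\Gamma}X,\Gamma')$; on the right, the identity $q(a,p(Y,\Gamma)^*(\Gamma'))\circ q(p(Y,\Gamma),\Gamma')=q(p(X,\Gamma),\Gamma')$ from the first paragraph, followed by the first equality of (\ref{2016.06.18.eq6}) for $exch(\Gamma',X;\Gamma)$, gives the same $p(\Gamma'\times_{\Gamma}X,\Gamma')$. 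Hence (\ref{2016.05.20.eq1}) holds.

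Finally, (\ref{2016.05.20.eq2}) follows by precomposing both sides of (\ref{2016.05.20.eq1}) with $exch(X,\Gamma';\Gamma)$ and postcomposing with $exch(Y,\Gamma';\Gamma)$, then collapsing $exch(X,\Gamma';\Gamma)\circ exch(\Gamma',X;\Gamma)=Id_{X\times_{\Gamma}\Gamma'}$ and $exch(\Gamma',Y;\Gamma)\circ exch(Y,\Gamma';\Gamma)=Id_{\Gamma'\times_{\Gamma}Y}$ by (\ref{2016.05.20.eq3}); what remains on the two sides is exactly (\ref{2016.05.20.eq2}). I do not expect a genuine conceptual obstacle; the work is bookkeeping, namely keeping the domains and codomains straight and, at each step, selecting the instance of the two equalities in (\ref{2016.06.18.eq6}) and of the composition law of Lemma \ref{2015.06.11.l2}(2) with the correct letters in the roles of ``$X$'' and ``$Y$''. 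The one step that is not purely mechanical is the identity $q(a,p(Y,\Gamma)^*(\Gamma'))\circ q(p(Y,\Gamma),\Gamma')=q(p(X,\Gamma),\Gamma')$, which comes from $a\circ p(Y,\Gamma)=p(X,\Gamma)$ together with Lemma \ref{2015.06.11.l2}(2) and is precisely what makes the computation with the second projection close up.
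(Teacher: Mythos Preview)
Your proposal is correct and follows essentially the same approach as the paper: you verify (\ref{2016.05.20.eq1}) by composing both sides with the two pullback projections $p(Y\times_{\Gamma}\Gamma',Y)$ and $q(p(Y,\Gamma),\Gamma')$, using exactly the same ingredients (the equalities (\ref{2016.06.18.eq6}), the defining property (\ref{2015.06.11.eq7}), the commutativity of (\ref{2015.06.11.sq2}), Lemma \ref{2015.06.11.l2}(2), and the fact that $p(\Gamma',\Gamma)^*(a)$ is over $\Gamma'$), and then derive (\ref{2016.05.20.eq2}) by conjugating with the exchange isomorphisms via (\ref{2016.05.20.eq3}). The only difference is presentational: you add an explicit preliminary paragraph identifying the domains and codomains in the $\times_{\Gamma}$ notation, which the paper leaves implicit.
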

\begin{proof}
Let us prove (\ref{2016.05.20.eq1}). The domain of both sides of (\ref{2016.05.20.eq1}) is $p(\Gamma',\Gamma)^*(X)$ and the codomain is $p(Y,\Gamma)^*(\Gamma')$. By Lemma \ref{2015.06.15.l1} the codomain is a pullback with projections $p(p(Y,\Gamma)^*(\Gamma'),Y)$ and $q(p(Y,\Gamma),\Gamma')$. Therefore it is sufficient to show that the compositions of the left and right hand sides with each of the projections are equal. 

For the compositions with the first projection we have
$$p(\Gamma',\Gamma)^*(a)\circ exch(\Gamma',Y;\Gamma)\circ 
p(p(Y,\Gamma)^*(\Gamma'),Y)=$$$$p(\Gamma,\Gamma')^*(a)\circ q(p(\Gamma',\Gamma),Y)=q(p(\Gamma',\Gamma),X)\circ a$$
where the first equality is by (\ref{2016.06.18.eq6}) and the second one is by (\ref{2015.06.11.eq7}). Also we have
$$exch(\Gamma',X;\Gamma)\circ q(a,p(Y,\Gamma)^*(\Gamma')\circ p(p(Y,\Gamma)^*(\Gamma'),Y)=$$$$exch(\Gamma',X;\Gamma)\circ p(p(X,\Gamma)^*(\Gamma'),X)\circ a=q(p(\Gamma',\Gamma),X)\circ a$$
where the first equality is by the commutativity of squares of the form (\ref{2015.06.11.sq2}) and the second one by (\ref{2016.06.18.eq6}).

For the compositions with the second projections we have in the first case
$$p(\Gamma',\Gamma)^*(a)\circ exch(\Gamma',Y;\Gamma)\circ q(p(Y,\Gamma),\Gamma')=$$$$p(\Gamma',\Gamma)^*(a)\circ p(p(\Gamma',\Gamma)^*(Y),\Gamma')=p(p(\Gamma,\Gamma')^*(X),\Gamma')$$
where the first equality is by (\ref{2016.06.18.eq6}) and the second is by the fact that $p(\Gamma',\Gamma)^*(a)$ is a morphism over $\Gamma'$. In the second case we have
$$exch(\Gamma',X;\Gamma)\circ q(a,p(Y,\Gamma)^*(\Gamma')\circ q(p(Y,\Gamma),\Gamma')=exch(\Gamma',X;\Gamma)\circ q(a\circ p(Y,\Gamma),\Gamma')=$$$$exch(\Gamma',X;\Gamma)\circ q(p(X,\Gamma),\Gamma')=p(p(\Gamma',\Gamma)^*(X),\Gamma')$$
where the first equality is by Lemma \ref{2015.06.11.l2}(2), the second one is by the fact that $a$ is a morphism over $\Gamma$ and the third one is by (\ref{2016.06.18.eq6}).

The second equality (\ref{2016.05.20.eq2}) follows by taking the composition of (\ref{2016.05.20.eq1}) with $exch(X,\Gamma';\Gamma)$ on the left and on the right and using (\ref{2016.05.20.eq3}).

The lemma is proved. 
\end{proof}
\begin{lemma}
\llabel{2016.04.28.l1}
Let $a:X\sr Y$ be a morphism over $\Gamma$. Then one has:
\begin{enumerate}
\item $Id_{\Gamma}^*(a)=a$,
\item if $f:\Gamma'\sr \Gamma$, $g:\Gamma''\sr \Gamma'$ are two morphisms then 
$$(g\circ f)^*(a)=g^*(f^*(a))$$
\end{enumerate}
\end{lemma}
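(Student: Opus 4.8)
The plan is to prove both parts by appealing to the uniqueness clause of Lemma \ref{2016.08.06.l1}: for a morphism $b:Z\sr W$ over $\Gamma'$, the morphism $f^*(b)$ is \emph{the unique} morphism over $\Gamma'$ from $f^*(Z)$ to $f^*(W)$ satisfying the equation (\ref{2015.06.11.eq7}). So in each case I will exhibit the claimed right hand side, check that it is a morphism over the correct base object, that its domain and codomain agree with those of the left hand side, and that it satisfies the characterizing equation; this is exactly the proof pattern already used for Lemmas \ref{2016.08.18.l1} and \ref{2016.09.21.l2}.

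For part (1), note that $Id_\Gamma:\Gamma\sr\Gamma$, so $Id_\Gamma^*(a)$ is characterized as a morphism over $\Gamma$. By Lemma \ref{2015.06.11.l2}(1) we have $Id_\Gamma^*(X)=X$, $Id_\Gamma^*(Y)=Y$, $q(Id_\Gamma,X)=Id_X$ and $q(Id_\Gamma,Y)=Id_Y$, so the characterizing equation (\ref{2015.06.11.eq7}) for $Id_\Gamma^*(a)$ reads $Id_\Gamma^*(a)\circ Id_Y = Id_X\circ a$, i.e. $Id_\Gamma^*(a)=a$. Since $a$ is by hypothesis a morphism over $\Gamma$ with domain $X$ and codomain $Y$, it meets all the requirements, and uniqueness gives $Id_\Gamma^*(a)=a$.

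For part (2), recall $f:\Gamma'\sr\Gamma$, $g:\Gamma''\sr\Gamma'$, hence $g\circ f:\Gamma''\sr\Gamma$, and $(g\circ f)^*(a)$ is characterized as the unique morphism over $\Gamma''$ from $(g\circ f)^*(X)$ to $(g\circ f)^*(Y)$ satisfying (\ref{2015.06.11.eq7}). Now $f^*(a):f^*(X)\sr f^*(Y)$ is a morphism over $\Gamma'$ by Lemma \ref{2016.08.06.l1}, and then $g^*(f^*(a)):g^*(f^*(X))\sr g^*(f^*(Y))$ is a morphism over $\Gamma''$, again by Lemma \ref{2016.08.06.l1}. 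By Lemma \ref{2015.06.11.l2}(2) we have $g^*(f^*(X))=(g\circ f)^*(X)$ and $g^*(f^*(Y))=(g\circ f)^*(Y)$, so the domains and codomains match. It remains to check the equation, and here the computation is
$$g^*(f^*(a))\circ q(g\circ f,Y) = g^*(f^*(a))\circ q(g,f^*(Y))\circ q(f,Y) = q(g,f^*(X))\circ f^*(a)\circ q(f,Y)$$
$$= q(g,f^*(X))\circ q(f,X)\circ a = q(g\circ f,X)\circ a,$$
where the first and last equalities are by Lemma \ref{2015.06.11.l2}(2) and the middle two are applications of (\ref{2015.06.11.eq7}) to $f^*(a)$ (with the morphism $g$) and to $a$ (with the morphism $f$) respectively. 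By the uniqueness in Lemma \ref{2016.08.06.l1} we conclude $(g\circ f)^*(a)=g^*(f^*(a))$.

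I do not expect any genuine obstacle here; the only thing to be careful about is bookkeeping — tracking which base object each morphism is "over" so that the right instance of Lemma \ref{2016.08.06.l1} and of (\ref{2015.06.11.eq7}) is applied — but this is entirely routine given the identities (\ref{2016.09.17.eq2}) and Lemma \ref{2015.06.11.l2} already established above.
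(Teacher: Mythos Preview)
Your proof is correct and follows essentially the same approach as the paper: both parts are verified by checking that the proposed right hand side satisfies the characterizing equation (\ref{2015.06.11.eq7}), reducing to Lemma \ref{2015.06.11.l2}. Your chain of equalities in part (2) is in fact cleaner than the paper's, which contains a few $X$/$Y$ typos in the intermediate steps, but the underlying argument is identical.
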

\begin{proof}
\begin{enumerate}
\item It is sufficient to show that
$$a\circ q(Id_{\Gamma},Y)=q(Id_{\Gamma},X)\circ a$$
which is straightforward in view of Lemma \ref{2015.06.11.l2}(1). 
\item It is sufficient to show that
$$g^*(f^*(a))\circ q(g\circ f,Y)=q(g\circ f,X)\circ a$$
We have:
$$g^*(f^*(a))\circ q(g\circ f,Y)=g^*(f^*(a))\circ q(g, f^*(X))\circ q(f,X)=$$$$q(g,f^*(Y))\circ f^*(a)\circ q(f,X)=q(g,f^*(Y))\circ q(f,Y)\circ a=q(g\circ f,Y)\circ a$$
where the first and the fourth equalities follow from Lemma \ref{2015.06.11.l2}(2) and the second and the third from (\ref{2015.06.11.eq7}).
\end{enumerate}
\end{proof}
\begin{lemma}[cf. {\cite[Lemma, p.2.17]{Cartmell0}}]
\llabel{2015.06.15.l2}
\llabel{2015.06.11.l1}
Let $X,Y$ be objects over $\Gamma$, $Z$ an object over $Y$ and $a:X\sr Y$ a morphism over $\Gamma$. Let further $f:\Gamma'\sr \Gamma$ be a morphism. Then one has:
\begin{enumerate}
\item $p(Z,Y)$ is a morphism over $\Gamma$ and one has
\begin{eq}
\llabel{2016.04.28.eq1}
f^*(p(Z,Y))=p(f^*(Z),f^*(Y))
\end{eq}
\item $a^*(Z)$ is an object over $\Gamma$ and one has
\begin{eq}\llabel{2015.06.15.eq1}
f^*(a^*(Z))=(f^*(a))^*(f^*(Z))
\end{eq}
\item $q(a,Z)$ is a morphism over $\Gamma$ and one has
\begin{eq}\llabel{2015.06.15.eq2}
f^*(q(a,Z))=q(f^*(a),f^*(Z))
\end{eq}
\end{enumerate}
\end{lemma}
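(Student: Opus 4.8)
The plan is to treat the three parts in order. In each part I would first dispose of the ``is a morphism (resp.\ an object) over $\Gamma$'' assertion, which is a short computation with the factorizations (\ref{2016.09.17.eq1})--(\ref{2015.06.11.eq4}), and then establish the compatibility with $f^*$ by checking that the right-hand side satisfies the property that characterizes the left-hand side via the uniqueness clauses of Lemmas \ref{2015.06.15.l1} and \ref{2016.08.06.l1}.

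For part 1: that $p(Z,Y)$ is a morphism over $\Gamma$ is immediate from $Z\ge Y\ge\Gamma$ and $p(Z,\Gamma)=p(Z,Y)\circ p(Y,\Gamma)$, which is (\ref{2015.06.11.eq4}). By (\ref{2016.08.18.eq4}) one has $f^*(Z)\ge f^*(Y)\ge\Gamma'$, so $p(f^*(Z),f^*(Y))$ is defined and, by the same factorization argument, is a morphism over $\Gamma'$; hence by Lemma \ref{2016.08.06.l1} it suffices to verify the characterizing equation $p(f^*(Z),f^*(Y))\circ q(f,Y)=q(f,Z)\circ p(Z,Y)$, which is exactly the commutativity of the square (\ref{2016.09.21.eq2}) of Lemma \ref{2016.09.21.l3} applied to $Z\ge Y\ge\Gamma$. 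For part 2: $a^*(Z)\ge X\ge\Gamma$ by Lemma \ref{2015.06.15.l1}, so $a^*(Z)$ is over $\Gamma$; for the identity I would use the chain $f^*(a^*(Z))=q(f,X)^*(a^*(Z))=(q(f,X)\circ a)^*(Z)=(f^*(a)\circ q(f,Y))^*(Z)=(f^*(a))^*(q(f,Y)^*(Z))=(f^*(a))^*(f^*(Z))$, where the first and last equalities are (\ref{2015.06.11.eq5}) (for $a^*(Z)\ge X\ge\Gamma$ and $Z\ge Y\ge\Gamma$ respectively), the second and fourth are Lemma \ref{2015.06.11.l2}(2), and the third is the defining equation (\ref{2015.06.11.eq7}) of $f^*(a)$, namely $q(f,X)\circ a=f^*(a)\circ q(f,Y)$.

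For part 3: that $q(a,Z)$ is a morphism over $\Gamma$ follows from $q(a,Z)\circ p(Z,\Gamma)=q(a,Z)\circ p(Z,Y)\circ p(Y,\Gamma)=p(a^*(Z),X)\circ a\circ p(Y,\Gamma)=p(a^*(Z),X)\circ p(X,\Gamma)=p(a^*(Z),\Gamma)$, using the commutativity of (\ref{2015.06.11.sq2}) for $a$ and $Z\ge Y$, the factorization (\ref{2015.06.11.eq4}), and the fact that $a$ is over $\Gamma$. For the identity, part 2 already identifies the domain $f^*(a^*(Z))$ of $f^*(q(a,Z))$ with the domain $(f^*(a))^*(f^*(Z))$ of $q(f^*(a),f^*(Z))$, and $q(f^*(a),f^*(Z))$ is a morphism over $\Gamma'$ by the first half of part 3 applied to $f^*(a)$ and $f^*(Z)$; so by Lemma \ref{2016.08.06.l1} it remains to prove $q(f^*(a),f^*(Z))\circ q(f,Z)=q(f,a^*(Z))\circ q(a,Z)$. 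I would get this by rewriting the right-hand side: $q(f,a^*(Z))=q(q(f,X),a^*(Z))$ by (\ref{2015.06.11.eq6}), then $q(q(f,X),a^*(Z))\circ q(a,Z)=q(q(f,X)\circ a,Z)$ by Lemma \ref{2015.06.11.l2}(2), then replace $q(f,X)\circ a$ by $f^*(a)\circ q(f,Y)$ using (\ref{2015.06.11.eq7}), then $q(f^*(a)\circ q(f,Y),Z)=q(f^*(a),q(f,Y)^*(Z))\circ q(q(f,Y),Z)$ by Lemma \ref{2015.06.11.l2}(2) again, and finally $q(f,Y)^*(Z)=f^*(Z)$ and $q(q(f,Y),Z)=q(f,Z)$ by Lemma \ref{2016.09.17.l2}, landing on the left-hand side.

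I do not anticipate a genuine obstacle: the whole proof is bookkeeping of substitutions into Lemma \ref{2015.06.11.l2}(2), Lemma \ref{2016.09.17.l2}, and the uniqueness clauses of Lemmas \ref{2015.06.15.l1} and \ref{2016.08.06.l1}. The one point requiring care is the ordering — part 3 must invoke part 2 to know that the two morphisms being compared have a common domain before the uniqueness clause of Lemma \ref{2016.08.06.l1} applies — together with checking at each use of (\ref{2015.06.11.eq5}) and (\ref{2015.06.11.eq6}) that the relevant over-chain (either $a^*(Z)\ge X\ge\Gamma$ or $Z\ge Y\ge\Gamma$) is indeed available.
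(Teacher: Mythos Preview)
Your proposal is correct and follows essentially the same route as the paper: parts 1 and 2 are identical to the paper's argument (same chain of equalities, same citations), and in part 3 your computation of the composition with $q(f,Z)$ is exactly the paper's second-projection check. The only cosmetic difference is that for the remaining half of part 3 you invoke the uniqueness clause of Lemma \ref{2016.08.06.l1} together with ``$q(f^*(a),f^*(Z))$ is over $\Gamma'$'', whereas the paper instead presents $f^*(Z)=q(f,Y)^*(Z)$ as a pullback over $f^*(Y)$ and explicitly checks the composition with the other projection $p(f^*(Z),f^*(Y))$ using parts 1 and 2 and Lemma \ref{2016.08.18.l1}; both are the same pullback-uniqueness argument in slightly different packaging.
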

\begin{remark}\rm
The assertions of the lemma can be expressed by the informal equation 
$$
f^*\left(
\begin{CD}
a^*(Z) @>q(a,Z)>> Z\\
@VVV @VVp(Z,Y)V\\
X @>a>> Y
\end{CD}
\spc\,\,\,\,\right)
=
\begin{CD}
(f^*(a))^*(f^*(Z)) @> q(f^*(a),f^*(Z))>> f^*(Z)\\
@VVV @VVp(f^*(Z),f^*(Y))V\\
f^*(X) @>f^*(a)>> f^*(Y)
\end{CD}\spc\spc
$$
\end{remark}
\begin{proof}
\begin{enumerate}
\item That $p(Z,Y)$ is a morphism over $\Gamma$ follows from (\ref{2015.06.11.eq4}). By definition, $f^*(p(Z,Y))$ is the unique morphism over $\Gamma'$ satisfying the equation
\begin{eq}\llabel{2016.04.30.eq1}
f^*(p(Z,Y))\circ q(f,Y)=q(f,Z)\circ p(Z,Y)
\end{eq}
By Lemma \ref{2015.06.15.l1}, objects $f^*(Y)$ and $f^*(Z)$ are over $\Gamma'$ and by (\ref{2016.08.18.eq4}) we have $f^*(Z)\ge f^*(Y)$. Therefore $p(f^*(Z),f^*(Y))$ is defined and by the previous statement it is a morphism over $\Gamma'$. Next we have
$$p(f^*(Z),f^*(Y))\circ q(f,Y)=p(q(f,Y)^*(Z),f^*(Y))\circ q(f,Y)=$$$$
q(q(f,Y),Z)\circ p(Z,Y)=q(f,Z)\circ p(Z,Y)$$
Where the first equality is by (\ref{2015.06.11.eq5}), the second is by the commutativity of (\ref{2015.06.11.sq2}) for $\Gamma'=f^*(Y)$ and $\Gamma=Y$ and the third is by (\ref{2015.06.11.eq6}).

We conclude that $p(f^*(Z),f^*(Y))$ also satisfies (\ref{2016.04.30.eq1}) and therefore (\ref{2016.04.28.eq1}) holds.
\item By Lemma \ref{2015.06.15.l1}, $a^*(Z)$ is an object over $X$ and since $X$ is an object over $\Gamma$, $a^*(Z)$ is an object over $\Gamma$. For the proof of (\ref{2015.06.15.eq1}) 
we have 
$$f^*(a^*(Z))=q(f,X)^*(a^*(Z))=(q(f,X)\circ a)^*(Z)=(f^*(a)\circ q(f,Y))^*(Z)=$$$$(f^*(a))^*(q(f,Y)^*(Z))=(f^*(a))^*(f^*(Z))$$
where the first equality is by (\ref{2015.06.11.eq5}), the second is by Lemma \ref{2016.04.28.l1}(2), the third is by (\ref{2015.06.11.eq7}), the fourth is by Lemma \ref{2016.04.28.l1}(2) and the fifth is by (\ref{2015.06.11.eq5}). 
\item Let us show first that $q(a,Z)$ is a morphism over $\Gamma$. We have
$$q(a,Z)\circ p(Z,\Gamma)=q(a,Z)\circ p(Z,Y)\circ p(Y,\Gamma)=p(a^*(Z),X)\circ a\circ p(Y,\Gamma)=$$$$p(a^*(Z),X)\circ p(X,\Gamma)=p(a^*(Z),\Gamma)$$
where the first equality is by (\ref{2015.06.11.eq4}), the second by the commutativity of the square (\ref{2015.06.11.sq2}), the third is by the assumption that $a$ is a morphism over $\Gamma$ and the fourth is by (\ref{2015.06.11.eq4}).

By (\ref{2015.06.15.eq1}) the morphisms on the left and the right hand side of (\ref{2015.06.15.eq2}) have the same domain. The codomain of the morphisms on both sides of (\ref{2015.06.15.eq2}) is $f^*(Z)$. By (\ref{2015.06.11.eq5}) we have $f^*(Z)=q(f,Y)^*(Z)$ and by Lemma \ref{2015.06.15.l1}
$q(f,Y)^*(Z)$ is a pullback with projections 
$$p(q(f,Y)^*(Z),f^*(Y))=p(f^*(Z),f^*(Y))$$
and
$$q(q(f,Y),Z)=q(f,Z)$$
where the equalities are by (\ref{2015.06.11.eq5}) and (\ref{2015.06.11.eq6}).

It is, therefore sufficient to verify that the compositions of the right and the left hand sides of (\ref{2015.06.15.eq2}) with each of the projections coincide. We have 
$$f^*(q(a,Z))\circ p(f^*(Z),f^*(Y))=f^*(q(a,Z))\circ f^*(p(Z,Y))=$$$$f^*(q(a,Z)\circ p(Z,Y))$$
where the first equality is by (\ref{2016.04.28.eq1}) and the second by (\ref{2015.06.11.eq2}). Next we have
$$q(f^*(a),f^*(Z))\circ p(f^*(Z),f^*(Y))=p((f^*(a))^*(f^*(Z)),f^*(X))\circ f^*(a)=$$$$p(f^*(a^*(Z)),f^*(X))\circ f^*(a)=f^*(p(a^*(Z),X))\circ f^*(a)=f^*(p(a^*(Z),X)\circ a)=$$$$f^*(q(a,Z)\circ p(Z,Y))$$
where the first equality is by the commutativity of square (\ref{2015.06.11.sq2}), the second is by (\ref{2015.06.15.eq1}), the third is by (\ref{2016.04.28.eq1}), the fourth is by (\ref{2015.06.11.eq2}) and the fifth is again by the commutativity of square (\ref{2015.06.11.sq2}). This shows that the compositions with the first projection coincide. 

For the compositions with the second projection we have:
$$f^*(q(a,Z))\circ q(f,Z)=q(f,a^*(Z))\circ q(a,Z)=q(q(f,X),a^*(Z))\circ q(a,Z)=$$$$q(q(f,X)\circ a,Z)$$
where the first equality is by (\ref{2015.06.11.eq7}), the second by (\ref{2015.06.11.eq6}) and the third by Lemma \ref{2015.06.11.l2}(2). Next we have
$$q(f^*(a),f^*(Z))\circ q(f,Z)=q(f^*(a),f^*(Z))\circ q(q(f,Y),Z)=q(f^*(a)\circ q(f,Y),Z)=$$$$q(q(f,X)\circ a,Z)$$
where the first equality is by (\ref{2015.06.11.eq6}), the second by Lemma \ref{2015.06.11.l2}(2) and the third one is by (\ref{2015.06.11.eq7}). This shows that the composition with the second projections coincide and completes the proof of (\ref{2015.06.15.eq2}).
\end{enumerate}
\end{proof}
Lemma \ref{2015.06.15.l2} can be used to show that the fiber product $X\times_{\Gamma} Y$ is strictly associative.
\begin{lemma}
\llabel{2016.06.11.l1}
For any $X,Y,Z$ over $\Gamma$ one has
$$X\times_{\Gamma} (Y\times_{\Gamma} Z)=(X\times_{\Gamma} Y)\times_{\Gamma} Z$$
\end{lemma}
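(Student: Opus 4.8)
The plan is to reduce both sides to iterated pullbacks along $p$-morphisms via the defining formula $A\times_\Gamma B=p(A,\Gamma)^*(B)$ and then manipulate these using Lemmas \ref{2015.06.11.l2} and \ref{2016.09.17.l2}. Unwinding the definitions, the left-hand side is $p(X,\Gamma)^*\big(p(Y,\Gamma)^*(Z)\big)$ and the right-hand side is $p(X\times_\Gamma Y,\Gamma)^*(Z)$. These expressions are legitimate because $Y\times_\Gamma Z\ge Y\ge\Gamma$ and $X\times_\Gamma Y\ge X\ge\Gamma$, so in each case $p(-,\Gamma)^*$ is applied to an object over $\Gamma$.

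The one genuinely non-formal input is that the projection $p(X\times_\Gamma Y,\Gamma)$ factors through $Y$. Applying Lemma \ref{2015.06.15.l1} to the morphism $f=p(X,\Gamma):X\sr\Gamma$ and the object $Y\ge\Gamma$ produces the (pullback) square
$$\begin{CD}
p(X,\Gamma)^*(Y) @>q(p(X,\Gamma),Y)>> Y\\
@Vp(p(X,\Gamma)^*(Y),X)VV @VVp(Y,\Gamma)V\\
X @>p(X,\Gamma)>> \Gamma
\end{CD}$$
whose commutativity, together with (\ref{2015.06.11.eq4}) applied to $X\times_\Gamma Y=p(X,\Gamma)^*(Y)\ge X\ge\Gamma$, gives
$$q(p(X,\Gamma),Y)\circ p(Y,\Gamma)=p(X\times_\Gamma Y,X)\circ p(X,\Gamma)=p(X\times_\Gamma Y,\Gamma).$$

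I would then finish with a short computation. Using this factorization and Lemma \ref{2015.06.11.l2}(2) with the object $Z\ge\Gamma$,
$$(X\times_\Gamma Y)\times_\Gamma Z=p(X\times_\Gamma Y,\Gamma)^*(Z)=\big(q(p(X,\Gamma),Y)\circ p(Y,\Gamma)\big)^*(Z)=q(p(X,\Gamma),Y)^*\big(p(Y,\Gamma)^*(Z)\big).$$
By the first assertion of Lemma \ref{2015.06.15.l1}, $p(Y,\Gamma)^*(Z)$ is an object over $Y$, hence $p(Y,\Gamma)^*(Z)\ge Y\ge\Gamma$; therefore equation (\ref{2015.06.11.eq5}) of Lemma \ref{2016.09.17.l2}, applied with $f=p(X,\Gamma):X\sr\Gamma$, yields
$$q(p(X,\Gamma),Y)^*\big(p(Y,\Gamma)^*(Z)\big)=p(X,\Gamma)^*\big(p(Y,\Gamma)^*(Z)\big)=X\times_\Gamma(Y\times_\Gamma Z).$$
Chaining the two displays gives the asserted equality.

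There is no real obstacle here: once one observes that $p(X\times_\Gamma Y,\Gamma)=q(p(X,\Gamma),Y)\circ p(Y,\Gamma)$, the functoriality of pullback along composites (Lemma \ref{2015.06.11.l2}(2)) and the identity $f^*=q(f,Y)^*$ on objects lying over $Y$ (Lemma \ref{2016.09.17.l2}) do all the work. The only care needed is bookkeeping — tracking which object plays which role in the cited lemmas. If one also wished to record that the two sides carry the same projections to $X$, $Y$, $Z$ — which the statement as given does not demand — this could be verified by the same reductions, comparing the relevant $q$- and $p$-morphisms through (\ref{2016.09.17.eq2}) and (\ref{2015.06.11.eq4}).
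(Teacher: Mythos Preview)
Your proof is correct, but it takes a different route from the paper's. The paper starts from the left-hand side and applies the ``pullback of a pullback'' formula of Lemma~\ref{2015.06.15.l2}, namely (\ref{2015.06.15.eq1}), with $f=p(X,\Gamma)$ and $a=p(Y,\Gamma)$, obtaining $p(X,\Gamma)^*(p(Y,\Gamma)^*(Z))=(p(X,\Gamma)^*(p(Y,\Gamma)))^*(p(X,\Gamma)^*(Z))$; it then identifies $p(X,\Gamma)^*(p(Y,\Gamma))$ as $p(X\times_\Gamma Y,X)$ via (\ref{2016.04.28.eq1}) and uses (\ref{2015.06.11.eq4}) to conclude. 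In effect, the paper factors $p(X\times_\Gamma Y,\Gamma)$ through $X$, whereas you factor it through $Y$ via $q(p(X,\Gamma),Y)\circ p(Y,\Gamma)$ and then invoke (\ref{2015.06.11.eq5}). Your argument is slightly lighter in that it relies only on Lemma~\ref{2016.09.17.l2} and Lemma~\ref{2015.06.11.l2}(2), avoiding the more involved Lemma~\ref{2015.06.15.l2}; the paper's version, on the other hand, illustrates that lemma in action and stays closer to the theme that $f^*$ respects the entire canonical square structure.
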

\begin{proof}
We have
$$X\times_{\Gamma}(Y\times_{\Gamma} Z)=$$$$p(X,\Gamma)^*(p(Y,\Gamma)^*(Z))=(p(X,\Gamma)^*(p(Y,\Gamma)))^*(p(X,\Gamma)^*(Z))=$$$$p(p(X,\Gamma)^*(Y),p(X,\Gamma)^*(\Gamma))^*(p(X,\Gamma)^*(Z))=p(X\times_{\Gamma} Y,X)^*(p(X,\Gamma)^*(Z))=$$$$(p(X\times_{\Gamma} Y,X)\circ p(X,\Gamma))^*(Z)=p(X\times_{\Gamma} Y,\Gamma)^*(Z)=$$$$(X\times_{\Gamma} Y)\times_{\Gamma} Z$$
where the first equality is by definition, the second equality is by (\ref{2015.06.15.eq1}), the third is by (\ref{2016.04.28.eq1}), the fourth is by definitions, the fifth is by Lemma \ref{2015.06.11.l2}(2), the sixth is by (\ref{2015.06.11.eq4}) and the seventh is by definition. The lemma is proved.
\end{proof}
For completeness let us also show that $\Gamma$ is a strict two sided unit of $\times_{\Gamma}$.
\begin{lemma}
\label{2016.09.25.l1}
For any $X\ge \Gamma$ one has
$$X\times_{\Gamma}\Gamma=X\spc\spc \Gamma\times_\Gamma X=X$$
\end{lemma}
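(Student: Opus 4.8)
The plan is to unfold the definition $X\times_{\Gamma}Y=p(X,\Gamma)^*(Y)$ in each of the two cases and then invoke the appropriate identities already established earlier in the section. Neither equation should require any real computation; the only thing to be careful about is citing the correct normalization lemmas.

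For the first equation, I would write $X\times_{\Gamma}\Gamma=p(X,\Gamma)^*(\Gamma)$ by definition of $\times_{\Gamma}$. Now $p(X,\Gamma)$ is a morphism with codomain $\Gamma$, and the first half of (\ref{2016.08.18.eq5}) says precisely that $f^*(\Gamma)$ equals the domain of $f$ for any morphism $f$ with codomain $\Gamma$. Applying this with $f=p(X,\Gamma):X\to\Gamma$ gives $p(X,\Gamma)^*(\Gamma)=X$, which is what we want.

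For the second equation, I would write $\Gamma\times_{\Gamma}X=p(\Gamma,\Gamma)^*(X)$ by definition. By (\ref{2016.09.17.eq1}) we have $p(\Gamma,\Gamma)=Id_{\Gamma}$, so $p(\Gamma,\Gamma)^*(X)=Id_{\Gamma}^*(X)$, and by Lemma \ref{2015.06.11.l2}(1) this equals $X$. This completes the proof.

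The only "obstacle" worth flagging is bookkeeping: one must make sure that $X\ge\Gamma$ guarantees $p(X,\Gamma)$ is defined (it is, by the convention introduced right after Lemma \ref{2016.09.17.l1}) so that both sides are literally the same object, not merely canonically isomorphic — which is the whole point of the lemma. No case analysis or induction via Lemma \ref{2016.09.17.l1} is needed here, since both identities reduce directly to the stated behavior of $f^*$ on the base object and of $Id^*$.
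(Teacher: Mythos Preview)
Your proof is correct and follows essentially the same approach as the paper: unfold the definition of $\times_{\Gamma}$, then apply (\ref{2016.08.18.eq5}) for the first equality and Lemma \ref{2015.06.11.l2}(1) for the second. Your additional citation of (\ref{2016.09.17.eq1}) for $p(\Gamma,\Gamma)=Id_{\Gamma}$ makes explicit a step the paper leaves implicit, but otherwise the arguments are identical.
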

\begin{proof}
We have
$$X\times_{\Gamma}\Gamma=p(X,\Gamma)^*(\Gamma)=X$$
by (\ref{2016.08.18.eq5}) and
$$\Gamma\times_{\Gamma} X=p(\Gamma,\Gamma)^*(X)=Id_{\Gamma}^*(X)=X$$
by Lemma \ref{2015.06.11.l2}(1).
\end{proof}

In the case when $Z=pt$ we obtain a binary direct product on $CC$ that is strictly associative. Following the usual convention we write $X\times Y$ instead of $X\times_{pt} Y$. 

If $X\ge Y\ge \Gamma$ then $p(X,Y)$ is a morphism over $\Gamma$ by Lemma \ref{2015.06.15.l2}(1). Therefore if $ft(X)\ge \Gamma$ then $p_X=p(X,ft(X))$ is a morphism over $\Gamma$. In particular, if $X>\Gamma$ then $p_X$ is a morphism over $\Gamma$ and by Lemma \ref{2015.06.15.l2}(1) and (\ref{2016.05.06.eq1}) we have that for $f:\Gamma'\sr \Gamma$,
\begin{eq}\llabel{2016.05.14.eq1}
f^*(p_X)=p_{f^*(X)}
\end{eq} 

For a morphism $p:X\sr Y$ in a category let $sec(p)$ be the set of sections of $p$, that is,
$$sec(p)=\{s:Y\sr X\,|\,s\circ p=Id_X\}$$
\begin{lemma}
\llabel{2016.09.21.l1}
Let $X,Y,\Gamma\in CC$ then one has:
\begin{enumerate}
\item if $p:X\sr Y$ is a morphism over $\Gamma$ and $s\in sec(p)$ then $s$ is a morphism over $\Gamma$ and for $f:\Gamma'\sr \Gamma$ one has 
$$f^*(s)\in sec(f^*(p))$$
\item if $X\ge Y$ and $s\in sec(p(X,Y))$ then $s$ is a morphism over $\Gamma$ if and only if $Y\ge \Gamma$ and in this case for $f:\Gamma'\sr \Gamma$ one has
$$f^*(s)\in sec(p(f^*(X),f^*(Y)))$$
\end{enumerate}
\end{lemma}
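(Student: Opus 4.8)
The plan is to obtain everything from Definition \ref{2016.08.06.def2}, Lemma \ref{2016.08.06.l1}, Lemma \ref{2016.08.18.l1} and Lemma \ref{2015.06.15.l2}(1), deducing part (2) from part (1).

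For the first claim of part (1): since $p:X\sr Y$ is a morphism over $\Gamma$, Definition \ref{2016.08.06.def2} tells us that $X,Y\ge\Gamma$ and that $p\circ p(Y,\Gamma)=p(X,\Gamma)$. For $s\in sec(p)$, i.e. with $s\circ p=Id_Y$, I would then compute
$$s\circ p(X,\Gamma)=s\circ p\circ p(Y,\Gamma)=Id_Y\circ p(Y,\Gamma)=p(Y,\Gamma),$$
which is exactly the condition for $s$ to be a morphism over $\Gamma$. For the second claim, now that $s$ and $p$ are both morphisms over $\Gamma$ the morphism $f^*(s)$ is defined by Lemma \ref{2016.08.06.l1}, and Lemma \ref{2016.08.18.l1}(2) gives $f^*(s)\circ f^*(p)=f^*(s\circ p)=f^*(Id_Y)$, while Lemma \ref{2016.08.18.l1}(1) identifies $f^*(Id_Y)$ with $Id_{f^*(Y)}$; since $f^*(p):f^*(X)\sr f^*(Y)$ and $f^*(s):f^*(Y)\sr f^*(X)$, this is precisely the statement $f^*(s)\in sec(f^*(p))$.

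For part (2), let $X\ge Y$ and $s\in sec(p(X,Y))$. If $Y\ge\Gamma$ then, since ``is over'' is transitive, $X\ge Y\ge\Gamma$, so $p(X,Y)$ is a morphism over $\Gamma$ by Lemma \ref{2015.06.15.l2}(1), and part (1) shows $s$ is a morphism over $\Gamma$. Conversely, if $s$ is a morphism over $\Gamma$ then its domain $Y$ is an object over $\Gamma$ by Definition \ref{2016.08.06.def2}, i.e. $Y\ge\Gamma$. When $Y\ge\Gamma$ and $f:\Gamma'\sr\Gamma$ is given, part (1) applied to $p(X,Y)$ yields $f^*(s)\in sec(f^*(p(X,Y)))$, and $f^*(p(X,Y))=p(f^*(X),f^*(Y))$ by Lemma \ref{2015.06.15.l2}(1), so $f^*(s)\in sec(p(f^*(X),f^*(Y)))$.

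I do not expect a genuine obstacle here: the only points requiring care are the order of the steps — $f^*(s)$ may be formed only after $s$ has been shown to be a morphism over $\Gamma$, since that is the hypothesis of Lemma \ref{2016.08.06.l1} and Lemma \ref{2016.08.18.l1} — and checking that the domains and codomains match so that the identities computed above genuinely witness the section property; otherwise the argument is a short chain of already established facts.
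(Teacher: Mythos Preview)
Your proposal is correct and follows essentially the same route as the paper's own proof: the same computation $s\circ p(X,\Gamma)=s\circ p\circ p(Y,\Gamma)=p(Y,\Gamma)$ for part (1), the same appeal to Lemma \ref{2016.08.18.l1} for $f^*(s)\circ f^*(p)=Id_{f^*(Y)}$, and the same reduction of part (2) to part (1) via Lemma \ref{2015.06.15.l2}(1) together with the observation that the domain of a morphism over $\Gamma$ is over $\Gamma$. The only cosmetic difference is that the paper cites equation (\ref{2016.04.28.eq1}) directly where you cite Lemma \ref{2015.06.15.l2}(1).
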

\begin{proof}
If $p:X\sr Y$ is a morphism over $\Gamma$ then $X$ and $Y$ are objects over $\Gamma$. Since $s:Y\sr X$ we conclude that domain and codomain of $s$ are objects over $\Gamma$. Next we have
$$s\circ p(X,\Gamma)=s\circ p\circ p(Y,\Gamma)=Id_Y\circ p(Y,\Gamma)=p(Y,\Gamma)$$
which shows that $s$ is a morphism over $\Gamma$. For $f:\Gamma'\sr \Gamma$ we have $f^*(s):f^*(Y)\sr f^*(X)$ and, by Lemma \ref{2016.08.18.l1}, we have $f^*(s)\circ f^*(p)=f^*(s\circ p)=f^*(Id_Y)=Id_{f^*(Y)}$. 

To prove the second assertion note that if $Y\ge \Gamma$ then by the transitivity of $\ge$ we have $X\ge \Gamma$. Therefore, $p(X,Y)$ is a morphism over $\Gamma$ by Lemma \ref{2015.06.15.l2}(1) and $s$ is a morphism over $\Gamma$ by the first part of our lemma. In addition, by the first part of the lemma $f^*(s)\in sec(f^*(p(X,Y)))$ and $f^*(p(X,Y))=p(f^*(X),f^*(Y))$ by (\ref{2016.04.28.eq1}). 

On the other hand, if $s$ is a morphism over $\Gamma$ then $Y$ is an object over $\Gamma$ as the domain of $s$. This completes the proof of the lemma.
\end{proof}

Let us recall the notion of a C0-system from \cite[Definition 2.1]{Csubsystems}. It consists of structure on two sets of morphisms $Ob$ and $Mor$ comprising the length, $ft$, domain, codomain and identity functions together with operations of composition, $p$-morphisms and $q$-morphisms. These are required to satisfy all of the Cartmell's axioms except  the condition that the canonical squares are pullbacks. 

A C-system is defined as a C0-system together with the $s$-morphisms operation that is a function $a\mapsto s_a$ from the subset of morphisms $a:X\sr Y$ such that $l(Y)>0$ to all morphisms and that satisfies the following conditions where $ft(a)=a\circ p_{Y}$:
\begin{eq}
\llabel{2016.05.10.eq1}
s\in sec(p_{(ft(a))^*(Y)})
\end{eq}
\begin{eq}
\llabel{2016.05.10.eq3}
s_a\circ q(ft(a),Y)=a
\end{eq}
and if $b:ft(Y)\sr ft(Z)$, $l(Z)>0$ and $Y=b^*(Z)$ then
\begin{eq}
\llabel{2016.05.10.eq4}
s_a=s_{a\circ q(b,Z)}
\end{eq}
(see \cite[Definition 2.3]{Csubsystems}). We show in \cite[Proposition 2.4]{Csubsystems} that the canonical squares in a C-system are pullbacks and that if the canonical squares in a C0-system are pullbacks then there exists on it a unique $s$-morphisms operation satisfying the required conditions. 
\begin{remark}\rm\llabel{2016.08.18.rem2}
The construction of $s_a$ was also known to Cartmell, see \cite[p. 2.19]{Cartmell0}, who denoted these morphisms by $`a`$ and proved many interesting facts about them. However, his collection of results has very little intersection with the results that we use.
\end{remark}
\begin{lemma}\llabel{2016.05.07.l1}
Let $a:X\sr Y$ be a morphism over $\Gamma$ and $Y>\Gamma$. Then for $f:\Gamma'\sr \Gamma$ one has:
\begin{enumerate}
\item $ft(a)$ is a morphism over $\Gamma$ and one has
\begin{eq}\llabel{2016.05.10.eq5}
f^*(ft(a))=ft(f^*(a))
\end{eq}
\item $s_a$ is a morphism over $\Gamma$ and one has
\begin{eq}\llabel{2016.05.07.eq1}
f^*(s_a)=s_{f^*(a)}
\end{eq}
\end{enumerate}
\end{lemma}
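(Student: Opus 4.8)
The plan is to prove both assertions together, since the definition of $s_a$ via (\ref{2016.05.10.eq1})--(\ref{2016.05.10.eq3}) makes it natural to handle $ft(a)$ first and then feed the result into the analysis of $s_a$. For part (1), I would start from the definition $ft(a) = a\circ p_Y$. Since $Y > \Gamma$ we have $ft(Y) \ge \Gamma$, so $p_Y = p(Y, ft(Y))$ is a morphism over $\Gamma$ by Lemma \ref{2015.06.15.l2}(1); combined with the hypothesis that $a$ is a morphism over $\Gamma$ and Lemma \ref{2016.08.14.l3}(2), $ft(a) = a\circ p_Y$ is a morphism over $\Gamma$, with domain $X$ and codomain $ft(Y)$. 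To prove (\ref{2016.05.10.eq5}), I would expand $f^*(ft(a)) = f^*(a\circ p_Y) = f^*(a)\circ f^*(p_Y)$ using Lemma \ref{2016.08.18.l1}(2), then apply (\ref{2016.05.14.eq1}) to get $f^*(p_Y) = p_{f^*(Y)}$, so $f^*(ft(a)) = f^*(a)\circ p_{f^*(Y)} = ft(f^*(a))$. Here I would need to note that $f^*(a): f^*(X)\sr f^*(Y)$ with $f^*(Y) > \Gamma'$ (by Lemma \ref{2015.06.15.l1} applied to $Y > ft(Y) \ge \Gamma$, so $l(f^*(Y)) - l(\Gamma') = l(Y) - l(\Gamma) \ge 1$, or more directly since $f^*(Y) \ge f^*(ft(Y)) \ge \Gamma'$ by (\ref{2016.08.18.eq4}) with strict inequality of lengths), so that $ft(f^*(a))$ is defined and equals $f^*(a)\circ p_{f^*(Y)}$.

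For part (2), that $s_a$ is a morphism over $\Gamma$: by (\ref{2016.05.10.eq1}), $s_a$ is a section of $p_{(ft(a))^*(Y)}$. By part (1), $ft(a)$ is a morphism over $\Gamma$, so $(ft(a))^*(Y)$ is an object over $\Gamma$ (since its target $X$ is over $\Gamma$, using that $(ft(a))^*(Y) \ge X \ge \Gamma$), and hence $p_{(ft(a))^*(Y)}$ is a morphism over $\Gamma$ by Lemma \ref{2015.06.15.l2}(1), provided $ft((ft(a))^*(Y)) \ge \Gamma$, which holds because $(ft(a))^*(Y) > X \ge \Gamma$ gives $ft((ft(a))^*(Y)) \ge X \ge \Gamma$ via (\ref{2016.09.17.eq4}). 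Then $s_a$ is a morphism over $\Gamma$ by Lemma \ref{2016.09.21.l1}(1).

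To prove (\ref{2016.05.07.eq1}), I would verify that $f^*(s_a)$ and $s_{f^*(a)}$ agree by checking they are the same morphism; the cleanest route is to use the characterization via a section identity plus (\ref{2016.05.10.eq3}). Concretely: $s_{f^*(a)}$ is, by definition, characterized (among morphisms over $\Gamma'$ that are sections of the appropriate $p$-morphism) by $s_{f^*(a)}\circ q(ft(f^*(a)), f^*(Y)) = f^*(a)$. Using part (1), $ft(f^*(a)) = f^*(ft(a))$, and by Lemma \ref{2015.06.15.l2}(3) we have $f^*(q(ft(a), Y)) = q(f^*(ft(a)), f^*(Y))$. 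Then applying $f^*(-)$ to (\ref{2016.05.10.eq3}) for $a$, namely $s_a\circ q(ft(a), Y) = a$, and using functoriality of $f^*$ on morphisms over $\Gamma$ (Lemma \ref{2016.08.18.l1}(2)), gives $f^*(s_a)\circ f^*(q(ft(a),Y)) = f^*(a)$, i.e. $f^*(s_a)\circ q(f^*(ft(a)), f^*(Y)) = f^*(a)$. Combined with the fact (from part (2)) that $f^*(s_a)$ is a morphism over $\Gamma'$ and is a section of $p_{(ft(f^*(a)))^*(f^*(Y))}$ (which follows from Lemma \ref{2016.09.21.l1}(1) applied to the section $s_a$, together with (\ref{2016.04.28.eq1}) or Lemma \ref{2015.06.15.l2}(2) to identify $f^*((ft(a))^*(Y)) = (f^*(ft(a)))^*(f^*(Y))$), the uniqueness of the morphism with these properties forces $f^*(s_a) = s_{f^*(a)}$.

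The main obstacle I anticipate is the bookkeeping around uniqueness: I need to know that a morphism over $\Gamma'$ which is a section of $p_{(ft(f^*(a)))^*(f^*(Y))}$ and satisfies the $q$-composition identity $(\ref{2016.05.10.eq3})$ is unique, so that matching these two properties pins down $s_{f^*(a)}$. This uniqueness should follow from the pullback property of the canonical square defining $q(ft(f^*(a)), f^*(Y))$ (any section composed with $q$ lands in the fiber, and the two projection conditions — the section identity and the $q$-composition identity — determine the map into the pullback). Once that uniqueness is in hand, the rest is the routine verification sketched above; care is needed to cite Lemma \ref{2015.06.15.l2} in the correct form (with $a$ replaced by $ft(a)$ and $Z$ by $Y$) and to confirm all the ``over $\Gamma$'' side conditions, particularly that $Y > \Gamma$ propagates to $f^*(Y) > \Gamma'$.
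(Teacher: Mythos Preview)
Your proposal is correct and follows essentially the same approach as the paper's proof: part (1) is identical, and for part (2) both you and the paper reduce (\ref{2016.05.07.eq1}) to checking that $f^*(s_a)$ agrees with $s_{f^*(a)}$ on the two projections of the canonical pullback $(ft(f^*(a)))^*(f^*(Y))$. The only cosmetic difference is that the paper verifies the two projection identities directly (computing $f^*(s_a)\circ p_{\ldots}=Id_{f^*(X)}$ and $f^*(s_a)\circ q(\ldots)=f^*(a)$ via chains of equalities), whereas you package the same computation as a uniqueness argument; your anticipated ``obstacle'' is exactly the pullback property the paper invokes, so there is no gap.
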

\begin{proof}
Since $Y>\Gamma$ the morphism $p_Y$ is a morphism over $\Gamma$ and $ft(a)$ is a morphism over $\Gamma$ as the composition of two morphisms over $\Gamma$. 

Next we have
$$
f^*(ft(a))=f^*(a\circ p_Y)=f^*(a)\circ f^*(p_Y)=f^*(a)\circ p_{f^*(Y)}=ft(f^*(a))
$$
where the second equality is by (\ref{2015.06.11.eq2}) and the third equality is by (\ref{2016.05.14.eq1}). This proves (\ref{2016.05.10.eq5}). 

We have
$$p_{(ft(a)^*(Y)}=p(ft(a)^*(Y),X)$$
Therefore, the morphism $s_a$ is a morphism over $\Gamma$ by (\ref{2016.05.10.eq1}), Lemma \ref{2016.09.21.l1}(2) and our assumption that $X\ge \Gamma$. 

It remains to prove (\ref{2016.05.07.eq1}). The domains of two sides of (\ref{2016.05.07.eq1}) coincide. The codomain of the left hand side is $f^*(ft(a)^*(Y))$ and the codomain of the right hand side is $ft(f^*(a))^*(f^*(Y))$. We have
\begin{eq}\llabel{2016.05.12.eq1}f^*(ft(a)^*(Y))=(f^*(ft(a)))^*(f^*(Y))=ft(f^*(a))^*(f^*(Y))
\end{eq}
where the first equality is by (\ref{2015.06.15.eq1}) and the second one by (\ref{2016.05.10.eq5}). Therefore the codomains of the two sides of (\ref{2016.05.07.eq1}) coincide. 

Since the canonical squares in a C-system are pullbacks, $ft(f^*(a))^*(f^*(Y))$ is a pullback with projections $p_{ft(f^*(a))^*(f^*(Y))}$ and $q(ft(f^*(a)),f^*(Y))$. Therefore it is sufficient to verify that the compositions of the two sides of (\ref{2016.05.07.eq1}) with these projections coincide. 

We have
$$f^*(s_a)\circ p_{ft(f^*(a))^*(f^*(Y))}=f^*(s_a)\circ p_{f^*(ft(a)^*(Y))}=f^*(s_a)\circ f^*(p_{ft(a)^*(Y)})=$$$$f^*(s_a\circ p_{ft(a)^*(Y)})=f^*(Id_X)=Id_{f^*(X)}=s_{f^*(a)}\circ p_{ft(f^*(a))^*(f^*(Y))}$$
where the first equality is by (\ref{2016.05.12.eq1}), the second equality is by (\ref{2016.05.14.eq1}), the third equality is by (\ref{2015.06.11.eq2}), the fourth equality is by (\ref{2016.05.10.eq1}), the fifth equality is by (\ref{2016.04.25.eq3}) and the sixth is by (\ref{2016.05.10.eq1}). 

Next we have
$$f^*(s_a)\circ q(ft(f^*(a)),f^*(Y))=f^*(s_a)\circ q(f^*(ft(a)),f^*(Y))=f^*(s_a)\circ f^*(q(ft(a),Y))=$$$$f^*(s_a\circ q(ft(a),Y))=f^*(a)=s_{f^*(a)}\circ q(ft(f^*(a)),f^*(Y))$$
where the first equality is by (\ref{2016.05.10.eq5}), the second equality is by (\ref{2015.06.15.eq2}), the third by (\ref{2015.06.11.eq2}), the fourth by (\ref{2016.05.10.eq3}) and  the fifth again by (\ref{2016.05.10.eq3}).

This completes the proof of the lemma.
\end{proof}

\subsection{Presheaves $\Ob_n$ and $\wOb_n$}
\label{Sec.2.2}
Given a C-system $CC$ and an object $\Gamma$ of $CC$ we let 
\begin{eq}
\llabel{2016.09.01.eq1}
\Ob_n(\Gamma)=\{X\in CC\,|\,l(X)=l(\Gamma)+n,\,\,ft^n(X)=\Gamma\}
\end{eq}
In particular, for any $n$ and $X\in\Ob_n(\Gamma)$ we have $X\ge \Gamma$. Therefore, for $f:\Gamma'\sr \Gamma$ the object $f^*(X)$ is defined and by (\ref{2016.04.30.eq2}) we have $f^*(X)\in\Ob_n(\Gamma)$. By Lemma \ref{2015.06.11.l2}, the functions $f^*:\Ob_n(\Gamma)\sr\Ob_n(\Gamma')$ satisfy the axioms of a presheaf (see Remark \ref{2016.09.05.rem1}). We keep the notation $\Ob_n$ for this presheaf and may write $\Ob_n(f)$ for the function $f^*$ on $\Ob_n$.

We also let
\begin{eq}
\llabel{2016.09.01.eq2}
\wt{\Ob}_n(\Gamma)=\{o\in Mor(CC)\,|\,codom(o)\in\Ob_n(\Gamma),\,\,o\in sec(p_{codom(o)}),\,\,codom(o)>\Gamma\}
\end{eq}
The last condition is automatically satisfied if $n>0$ and implies that $\wOb_0(\Gamma)=\emptyset$. 

For $o:X\sr Y$ in $\wt{\Ob}_n(\Gamma)$ we have $dom(o)=ft(codom(o))$, and $codom(o)>\Gamma$. Therefore $dom(o)\ge \Gamma$ by (\ref{2016.09.17.eq4}) and $o$ is a morphism over $\Gamma$ by Lemma \ref{2016.09.21.l1}. 

By the previous conclusion and Lemma \ref{2016.08.06.l1}, for any $f:\Gamma'\sr \Gamma$ there is a unique morphism $f^*(o)$ of the form $f^*(X)\sr f^*(Y)$ over $\Gamma'$ such that $f^*(o)\circ q(f,Y)=q(f,X)\circ o$. 

By (\ref{2016.04.30.eq2}) we have $codom(f^*(o))\in\Ob_n(\Gamma')$. By (\ref{2016.04.28.eq1}) we have $p_{codom(f^*(o))}=f^*(p_{codom(o)})$ and therefore by Lemma \ref{2016.08.18.l1} we have $f^*(o)\in sec(p_{codom(f^*(o))})$. Finally since $n>0$ we have $codom(f^*(o))>\Gamma$. 

We conclude that for $o\in\wt{\Ob}_n(\Gamma)$ and $f:\Gamma'\sr\Gamma$ the morphism $f^*(o)$ is defined and belongs to $\wt{\Ob}_n(\Gamma')$. By Lemma \ref{2016.04.28.l1} the functions $f^*$ on $\wOb_n$ satisfy the axioms of a presheaf, that is, a contravariant functor from $CC$ to $Sets$. We keep the notation $\wt{\Ob}_n$ for this presheaf and may write $\wt{\Ob}_n(f)$ for the function $f^*$ on $\wt{\Ob}_n$.

For $o\in\wt{\Ob}_n(\Gamma)$ we let $\partial(o)$ denote $codom(o)$. It is immediate from the definitions that this defines morphisms of presheaves
$$\partial:\wt{\Ob}_n\sr\Ob_n$$
%

%##Sig_place

%
\begin{problem}\llabel{2016.06.11.prob2}
Let $i,j\in \nn$, $i>0$. To construct a bijection between the following two sets:
\begin{enumerate}
\item The set of functions $F:Ob_{\ge i}\sr Ob$ such that 
\begin{enumerate}
\item for any $X\in Ob_{\ge i}$ one has $ft^j(F(X))=ft^i(X)$,
\item for any $X\in Ob_{\ge i}$ and $f:\Gamma'\sr ft^i(X)$ one has $f^*(F(X))=F(f^*(X))$.
\end{enumerate}
\item The set of morphisms of presheaves $\Ob_i\sr \Ob_j$ on $CC$.
\end{enumerate}
\end{problem}
\begin{remark}\rm\llabel{2016.06.13.rem1}
For $i=0$ there may be functions $F$ that do not correspond to morphisms of presheaves $\Ob_0\sr \Ob_j$. Consider for example  the one point C-system $Pt$ such that $Ob(Pt)=\{pt\}$. Then for any $j$ there is a unique function $Ob_{\ge 0}\sr Ob$ satisfying conditions (a) and (b). On the other hand, $\Ob_0$ is the one point presheaf while $\Ob_j$ for $j>0$ is the empty presheaf and the set of morphisms $\Ob_0\sr \Ob_j$ is empty. However the result may remain valid if the condition that $F(X)\in Ob_{\ge j}$ is added. 
\end{remark}
We start with an intermediate construction and a lemma.
\begin{problem}
\llabel{2016.06.11.prob1}
Let $X\in Ob_{\ge 1}$. For any $Y$ and $i\in \nn$ to construct an object $X'$ such that $X'\ge Y$ and $l(X')=l(Y)+i$.
\end{problem}
\begin{construction}\rm\llabel{2016.06.11.constr1}
Let $X_1=ft^{l(X)-1}(X)$. Then $l(X_1)=1$. For any $Z$, define $Z^n$ inductively by the rule $Z^0=pt$, $Z^1=Z$ and $Z^{n+1}=Z^n\times Z$. By (\ref{2016.06.17.eq3}) we have 
$$l(Z^{n+1})=l(Z^n)+l(Z)$$
This implies that $l(Z^n)=n l(Z)$. In particular, $l(X_1^i)=i$. Then $l(Y\times X_1^i)=l(Y)+i$ and $Y\times X_1^i\ge Y$.
\end{construction}
\begin{lemma}
\llabel{2016.06.17.l1}
Let $j\in\nn$ and let $F:Ob_{\ge i}\sr Ob$ be a function satisfying the conditions (a),(b) of Problem \ref{2016.06.11.prob2} relative to $j$. Then for any $X\in Ob_{\ge i}$ one has 
$$l(F(X))=l(ft^i(X))+j$$
\end{lemma}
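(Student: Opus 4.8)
The plan is to prove the statement by induction on the difference $l(X) - l(ft^i(X))$ using the induction principle of Lemma \ref{2016.09.17.l1}, applied to a suitable subset of objects above $ft^i(X)$. However, since $F$ is only defined on $Ob_{\ge i}$ and the induction naturally wants to decrease the ``top'' of $X$ while keeping $ft^i(X)$ fixed, a cleaner route is a direct induction on $n := l(X) - l(ft^i(X)) \ge i$, comparing $F(X)$ with $F(ft(X))$ at each successor step. For the base case $n = i$, we have $X \in Ob_i(ft^i(X))$; here I expect to reduce to the case $ft^i(X) = pt$ (or simply track lengths through condition (a) directly), since condition (a) of Problem \ref{2016.06.11.prob2} already gives $ft^j(F(X)) = ft^i(X)$, hence $l(F(X)) \ge l(ft^i(X))$ and in fact, once we know $F(X) \ge ft^i(X)$, the defining property $ft^j(F(X)) = ft^i(X)$ forces $l(F(X)) = l(ft^i(X)) + j$ --- provided we can show $l(F(X))$ is \emph{exactly} $l(ft^i(X)) + j$ rather than merely $\ge$. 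This requires knowing $F(X) \ge ft^i(X)$, which is where condition (a) is used: $ft^j(F(X)) = ft^i(X)$ together with the definition of $\ge$ (Definition \ref{2016.08.06.def1}) gives both $l(F(X)) \ge l(ft^i(X))$ and the relation; but to conclude the length is $l(ft^i(X)) + j$ we need $l(F(X)) - l(ft^i(X)) = j$, i.e. that $j$ applications of $ft$ to $F(X)$ land exactly on $ft^i(X)$, which is precisely the content of $ft^j(F(X)) = ft^i(X)$ combined with $l(F(X)) = l(ft^i(X)) + j$ --- so this is essentially immediate from (a) once we observe that $ft$ strictly decreases length on objects of positive length.

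Let me restructure. The key observation is that condition (a) says $ft^j(F(X)) = ft^i(X)$. If $F(X)$ had length $l(ft^i(X)) + m$ with $m < j$, then $ft^j(F(X))$ would not be defined in the naive sense, or would equal $ft^{j-m}$ of something of length $l(ft^i(X))$; in a C-system $ft(pt) = pt$, so $ft^k$ stabilizes at $pt$ once it reaches length $0$. So the subtlety is real: if $ft^i(X) = pt$ and $l(F(X)) < j$, condition (a) could still hold vacuously because $ft^j(F(X)) = pt = ft^i(X)$. This is exactly the phenomenon flagged in Remark \ref{2016.06.13.rem1}. Therefore the lemma genuinely needs condition (b), not just (a), and the right approach is: use Construction \ref{2016.06.11.constr1} to manufacture, for a given $X \in Ob_{\ge i}$, a morphism $f: \Gamma' \to ft^i(X)$ with $\Gamma'$ of arbitrarily large length, then apply condition (b) to get $f^*(F(X)) = F(f^*(X))$ and compare lengths on both sides using Lemma \ref{2015.06.15.l1} (which gives $l(f^*(Z)) - l(\Gamma') = l(Z) - l(ft^i(X))$ for $Z \ge ft^i(X)$) and condition (a) applied to $f^*(X) \in Ob_{\ge i}$ over $\Gamma'$.

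Concretely: fix $X \in Ob_{\ge i}$, set $\Gamma = ft^i(X)$. Let $m = l(F(X)) - l(\Gamma)$, which a priori satisfies $0 \le m$; we want $m = j$. Choose via Construction \ref{2016.06.11.constr1} (with the role of ``$X$'' there played by our $X$, and ``$Y$'' played by $\Gamma$, ``$i$'' played by some large $N$) an object $\Gamma' = \Gamma \times X_1^N \ge \Gamma$ with $l(\Gamma') = l(\Gamma) + N$ for $N$ large, and let $f = p(\Gamma', \Gamma): \Gamma' \to \Gamma$. By condition (b), $f^*(F(X)) = F(f^*(X))$. On one hand, since $F(X) \ge \Gamma$ (which follows from condition (a): $ft^j(F(X)) = \Gamma$ implies $F(X) \ge \Gamma$ whenever $l(F(X)) \ge l(\Gamma)$, and $l(F(X)) \ge l(\Gamma)$ since $ft$ is non-increasing on length and would stabilize only at $pt$ --- wait, this again has the $pt$ issue, so instead: if $l(F(X)) < l(\Gamma)$ then $ft^j(F(X))$, being $ft^{j-k}(pt) = pt$ for the appropriate $k$... ). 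To sidestep all of this cleanly, I will instead argue $l(F(X)) \ge l(\Gamma)$ as follows: by (a), $F(X) \ge \Gamma$ is equivalent to $l(F(X)) \ge l(\Gamma)$ and $ft^{l(F(X)) - l(\Gamma)}(F(X)) = \Gamma$; the hypothesis (a) gives $ft^j(F(X)) = \Gamma$, and if $l(F(X)) < l(\Gamma)$ this would force $\Gamma = pt$, handled separately. Assuming $l(F(X)) \ge l(\Gamma)$, Lemma \ref{2015.06.15.l1} gives $l(f^*(F(X))) = l(\Gamma') + m = l(\Gamma) + N + m$. On the other hand $f^*(X) \in Ob_{\ge i}$ with $ft^i(f^*(X)) = f^*(ft^i(X)) = f^*(\Gamma) = \Gamma'$ by (\ref{2016.05.06.eq1}) iterated, so condition (a) applied to $f^*(X)$ gives $ft^j(F(f^*(X))) = ft^i(f^*(X)) = \Gamma'$, whence $l(F(f^*(X))) \le l(\Gamma') + j$ is NOT immediate either --- it gives $ft^j$ of it is $\Gamma'$. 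Hmm. So I need one more ingredient: that for $Z$ with $ft^j(Z) = \Gamma'$ and $l(\Gamma')$ large (specifically $l(\Gamma') \ge j$... no, $> 0$ suffices together with $l(Z) \ge l(\Gamma')$), we get $l(Z) = l(\Gamma') + j$. The point of choosing $N$ large is exactly to guarantee $l(\Gamma') > 0$ so that $ft$ strictly decreases length all the way from $Z$ down past $\Gamma'$, hence $ft^j(Z) = \Gamma'$ with $l(\Gamma') > 0$ forces $l(Z) = l(\Gamma') + j$ (since $ft$ never stabilizes above length $0$). Combining: $l(\Gamma) + N + m = l(f^*(F(X))) = l(F(f^*(X))) = l(\Gamma') + j = l(\Gamma) + N + j$, so $m = j$, as desired.

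\textbf{Main obstacle.} The genuine difficulty, as Remark \ref{2016.06.13.rem1} signals, is the degenerate behavior of $ft$ near $pt$: condition (a) alone does not pin down $l(F(X))$ because $ft^j$ can ``overshoot'' into $pt = ft(pt)$. The resolution is to pull back along a morphism from an object $\Gamma'$ of large length (available via Construction \ref{2016.06.11.constr1} precisely because $X \in Ob_{\ge 1}$), where this degeneracy cannot occur, and then transport the length equality back via Lemma \ref{2015.06.15.l1} and condition (b). Formalizing ``$ft$ strictly decreases length on objects of positive length, hence $ft^k(Z) = W$ with $l(W) > 0$ implies $l(Z) = l(W) + k$'' is the one small auxiliary fact needed; it is a routine consequence of the C-system axioms ($ft$ decreases length by $1$ except that $ft(pt) = pt$).
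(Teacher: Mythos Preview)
Your proposal is correct and, once the false start on induction is discarded, takes essentially the same approach as the paper: pull back along $p(\Gamma',\Gamma)$ with $\Gamma'\ge\Gamma=ft^i(X)$ of positive length (supplied by Construction \ref{2016.06.11.constr1}), use condition (b) together with (\ref{2016.04.30.eq2}) to compare lengths, and apply condition (a) over the new base where $l(\Gamma')>0$ rules out the $ft(pt)=pt$ degeneracy. The paper does exactly this, choosing $X'$ with $l(X')>0$ rather than your ``$N$ large''; only $l(\Gamma')>0$ is actually needed, so your choice is harmless but overkill, and your worry about establishing $F(X)\ge\Gamma$ is easily dispatched (it follows directly from $ft^j(F(X))=\Gamma$, splitting on whether $l(F(X))\ge j$).
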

\begin{proof}
The issue that we have to address is that in the case $l(X)=i$ we have $l(ft^i(X))=l(ft^j(F(X)))=0$ which only tells us that $l(F(X))\le j$. We will show that this can not occur for functions satisfying the second condition of the problem. 

Let $X\in Ob_{\ge i}$. Since $i>0$ we have $l(X)>0$ and by Construction \ref{2016.06.11.constr1} we obtain an object $X'$ such that $X'\ge ft^i(X)$ and $l(X')=i>0$. Let $p=p(X',ft^i(X))$. Then 
\begin{eq}\llabel{2016.06.17.eq2}
l(F(X))-l(ft^i(X))=l(p^*(F(X)))-l(X')=l(F(p^*(X)))-l(X')
\end{eq}
where the first equation is by (\ref{2016.04.30.eq2}) and the second by our condition on $F$. We also have
$$ft^j(F(p^*(X)))=ft^i(p^*(X))=X'$$
and since $l(X')>0$ this implies that 
$$l(ft^j(F(p^*(X))))=l(F(p^*(X)))-j$$
Therefore, $l(F(p^*(X)))=l(X')+j$ and from (\ref{2016.06.17.eq2}) we get that 
$$l(F(X))=l(ft^i(X))+j$$
The lemma is proved.
\end{proof}
\begin{construction}\rm\llabel{2016.06.17.constr1}
Let us provide a construction for Problem \ref{2016.06.11.prob2}. 

Let $Fn$ be the set of functions satisfying conditions (a),(b) and $Mr$ the set of morphisms of presheaves $\Ob_i\sr \Ob_j$. 

An element of $Mr$ is, by definition, a family of functions $\psi_{\Gamma}:\Ob_i(\Gamma)\sr \Ob_j(\Gamma)$ parametrized by $\Gamma\in Ob$ that satisfy the condition that for any $f:\Gamma'\sr \Gamma$ and $X\in \Ob_i(\Gamma)$ one has 
\begin{eq}\llabel{2016.06.17.eq4}
f^*(\psi_{\Gamma}(X))=\psi_{\Gamma'}(f^*(X))
\end{eq}
Given an element $\psi_*=(\psi_{\Gamma})_{\Gamma\in Ob}$ in $Mr$ define a function $\Phi(\psi_*):Ob_{\ge i}\sr Ob$ by the formula
$$\Phi(\psi_*)(X)=\psi_{ft^i(X)}(X)$$
The right hand side is defined because the assumption that $l(X)\ge i$ implies that $X\in \Ob_i(ft^i(X))$. We have
$$ft^j(\Phi(\psi_*)(X))=ft^j(\psi_{ft^i(X)}(X))=ft^i(X)$$
because $\psi_{\Gamma}:\Ob_i(\Gamma)\sr \Ob_j(\Gamma)$. For a morphism $f:\Gamma'\sr ft^i(X)$ we have
$$f^*(\Phi(\psi_*)(X))=f^*(\psi_{ft^i(X)}(X))=\psi_{\Gamma}(f^*(X))$$
by (\ref{2016.06.17.eq4}). Therefore $\Phi(\psi_*)\in Fn$ and we have constructed a function $\Phi:Mr\sr Fn$.

Let $F\in Fn$. Define a family of functions $\Psi(F)_{\Gamma}:\Ob_i(\Gamma)\sr \Ob_j(\Gamma)$ parametrized by $\Gamma\in Ob$ by the formula
$$\Psi(F)_{\Gamma}(X)=F(X)$$
To show that this formula defines a function to $\Ob_j(\Gamma)$ we need to show that $F(X)\ge \Gamma$ and $l(F(X))=l(\Gamma)+j$. For $X\in \Ob_i(\Gamma)$ we have $\Gamma=ft^i(X)$ and therefore $\Gamma=ft^j(F(X))$ since $F\in Fn$. This shows that $F(X)\ge \Gamma$. The equality $l(F(X))=l(\Gamma)+j$ follows from $\Gamma=ft^i(X)$ and the equality of Lemma \ref{2016.06.17.l1}.

Let $f:\Gamma'\sr \Gamma$. Then 
$$f^*(\Psi(F)_{\Gamma}(X))=f^*(F(X))=F(f^*(X))=\Psi(F)_{\Gamma'}(f^*(X))$$
where the second equality follows from $\Gamma=ft^i(X)$  and the fact that $F$ satisfies condition (b). This shows that the family $\Psi(F)_*$ is a morphism of presheaves and that we have constructed a function $\Psi:Fn\sr Mr$.

For $\psi_*\in Mr$, $\Gamma\in Ob$ and $X\in \Ob_i(\Gamma)$ we have
$$\Psi(\Phi(\psi_*))_{\Gamma}(X)=\Phi(\psi_*)(X)=\psi_{ft^i(X)}(X)=\psi_{\Gamma}(X)$$
because $\Gamma=ft^i(X)$ for $X\in \Ob_i(\Gamma)$. This shows that $\Phi\circ\Psi=Id_{Mr}$.

For $F\in Fn$ and $X\in Ob_{\ge i}$ we have
$$\Phi(\Psi(F))(X)=\Psi(F)_{ft^i(X)}(X)=F(X)$$
This shows that $\Psi\circ \Phi=Id_{Fn}$ and completes the construction. 
\end{construction}

Let us make a few remarks concerning presheaves $\Ob_i$ and $\wOb_i$ and homomorphisms of C-systems.
\begin{lemma}
\llabel{2016.09.11.l1}
Let $H:CC\sr CC'$ be a homomorphism of C-systems. Then for $\Gamma\in CC$ one has:
\begin{enumerate}
\item for $T\in \Ob_n(\Gamma)$ one has $H(T)\in \Ob_n(H(\Gamma))$,
\item for $o\in \wOb_n(\Gamma)$ one has $H(o)\in \wOb_n(H(\Gamma))$.
\end{enumerate}
\end{lemma}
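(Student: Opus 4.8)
The plan is to unwind the definitions of $\Ob_n$ and $\wOb_n$ and translate each defining clause across the homomorphism $H$, using the already-established compatibilities of $H$ with length, $ft$, $p$-morphisms and $q$-morphisms. For the first assertion, recall that $T\in\Ob_n(\Gamma)$ means exactly $l(T)=l(\Gamma)+n$ and $ft^n(T)=\Gamma$. Since a homomorphism of C-systems preserves the length function and commutes with $ft$, we immediately get $l(H(T))=l(H(\Gamma))+n$ and $ft^n(H(T))=H(ft^n(T))=H(\Gamma)$, which is precisely the statement $H(T)\in\Ob_n(H(\Gamma))$. This step is essentially bookkeeping and I expect no difficulty.

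For the second assertion, I would start from the three defining conditions in (\ref{2016.09.01.eq2}) for $o\in\wOb_n(\Gamma)$: namely $codom(o)\in\Ob_n(\Gamma)$, $o\in sec(p_{codom(o)})$, and $codom(o)>\Gamma$. Write $Y=codom(o)$, so $o:ft(Y)\sr Y$. First, $H$ is a functor on the underlying $(Ob,Mor)$-data, so $codom(H(o))=H(codom(o))=H(Y)$, and by part (1) of the present lemma $H(Y)\in\Ob_n(H(\Gamma))$. Second, since $H$ takes $p$-morphisms to $p$-morphisms, $H(p_Y)=p_{H(Y)}$, and since $H$ is functorial, $H(o)\circ p_{H(Y)}=H(o)\circ H(p_Y)=H(o\circ p_Y)=H(Id_{ft(Y)})=Id_{H(ft(Y))}=Id_{ft(H(Y))}$, using again that $H$ commutes with $ft$; hence $H(o)\in sec(p_{codom(H(o))})$. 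Third, $Y>\Gamma$ translates to $H(Y)>H(\Gamma)$ because $H$ preserves length and commutes with $ft$ (this is Lemma \ref{2016.09.11.l2}(1), or can be seen directly from $l(H(Y))=l(Y)>l(\Gamma)=l(H(\Gamma))$ and $ft^{l(Y)-l(\Gamma)}(H(Y))=H(ft^{l(Y)-l(\Gamma)}(Y))=H(\Gamma)$). Combining the three, $H(o)\in\wOb_n(H(\Gamma))$.

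The only point requiring any care — and the closest thing to an obstacle — is making sure that the notion of ``homomorphism of C-systems'' as given in \cite[Definition 3.1]{Cfromauniverse} genuinely supplies the facts I am invoking: preservation of length, commutation with $ft$, functoriality on $(Ob,Mor)$, and the transport of $p$-morphisms to $p$-morphisms. These are all part of that definition (and are exactly the ingredients already used in Lemmas \ref{2016.09.11.l2} and \ref{2016.09.11.l3}), so I would simply cite them. No induction via Lemma \ref{2016.09.17.l1} is needed here since the conditions defining $\Ob_n$ and $\wOb_n$ are ``global'' rather than recursive; the proof is a direct verification of membership.
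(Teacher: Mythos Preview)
Your proposal is correct and follows essentially the same approach as the paper's own proof: both reduce part (1) to $H$ preserving length and commuting with $ft$, and part (2) to those facts together with functoriality of $(H_{Ob},H_{Mor})$ and the fact that $H$ takes $p$-morphisms to $p$-morphisms. Your write-up is simply a more explicit unwinding of the same verification.
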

\begin{proof}
The first assertion follows immediately from the fact that $H_{Ob}$ commutes with $l$-functions and the $ft$-function.

The second assertion follows from the fact that $H_{Ob}$ commutes with the $l$-functions and the $ft$-functions, from the fact that $(H_{Ob},H_{Mor})$ is a functor and from the fact that $H_{Mor}$ commutes with the $p$-functions.
\end{proof}
\begin{lemma}
\llabel{2016.09.11.l4}
Let $H:CC\sr CC'$ be a homomorphism of C-systems. Then for $f:\Gamma'\sr \Gamma$ one has
\begin{enumerate}
\item for $T\in\Ob_n(\Gamma)$ one has $H(f^*(T))=H(f)^*(H(T))$,
\item for $o\in \wOb_n(\Gamma)$ one has $H(f^*(o))=H(f)^*(H(o))$,
\end{enumerate}
where the right hand side of the equalities are defined by Lemma \ref{2016.09.11.l1}.
\end{lemma}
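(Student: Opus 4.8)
The plan is to reduce both assertions to statements about $f^*$ on objects and on morphisms over an object that are already available, namely Lemma \ref{2016.09.11.l2}(2) and Lemma \ref{2016.09.11.l3}(2); the only genuine work is to unwind the definitions so that the various incarnations of $f^*$ — on the presheaves $\Ob_n$ and $\wOb_n$, on objects $X\ge\Gamma$, and on morphisms over $\Gamma$ — are seen to agree in the relevant cases.

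For part (1) I would first note that given $T\in\Ob_n(\Gamma)$ we have $T\ge\Gamma$ by the definition of $\Ob_n(\Gamma)$, so the action of $\Ob_n(f)$ on $T$ is by construction the object $f^*(T)$ of Lemma \ref{2015.06.15.l1}. By Lemma \ref{2016.09.11.l1}(1), $H(T)\in\Ob_n(H(\Gamma))$, hence $H(T)\ge H(\Gamma)$ and $H(f)^*(H(T))$ is again the object-level $f^*$. Then Lemma \ref{2016.09.11.l2}(2), applied to the homomorphism $H$ and to $X=T$, $f:\Gamma'\sr\Gamma$, yields precisely $H(f^*(T))=H(f)^*(H(T))$.

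For part (2) I would use that, given $o\in\wOb_n(\Gamma)$, the discussion preceding the statement already shows $o$ is a morphism over $\Gamma$ and that the action of $\wOb_n(f)$ on $o$ is the morphism $f^*(o)$ supplied by Lemma \ref{2016.08.06.l1}, i.e.\ the unique morphism over $\Gamma'$ with $f^*(o)\circ q(f,Y)=q(f,X)\circ o$. By Lemma \ref{2016.09.11.l1}(2), $H(o)\in\wOb_n(H(\Gamma))$, so $\wOb_n(H(f))$ sends $H(o)$ to the morphism over $H(\Gamma')$ that Lemma \ref{2016.08.06.l1} produces from $H(o)$ and $H(f)$; by the uniqueness clause in that lemma this coincides with the morphism $H(f)^*(H(o))$ of Lemma \ref{2016.09.11.l3}(1). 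It then suffices to invoke Lemma \ref{2016.09.11.l3}(2) with $a=o$, giving $H(f^*(o))=H(f)^*(H(o))$.

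I do not anticipate a substantive obstacle: both parts amount to matching the presheaf-level $f^*$ with the object-level (resp.\ morphism-over-level) $f^*$ and then citing the already-proved Lemmas \ref{2016.09.11.l2}(2) and \ref{2016.09.11.l3}(2). The single point deserving a line of care is the identification, in part (2), of $H(f)^*(H(o))$ in the sense of the presheaf $\wOb_n$ with the morphism furnished by Lemma \ref{2016.09.11.l3}, which follows from the uniqueness in Lemma \ref{2016.08.06.l1}.
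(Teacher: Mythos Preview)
Your proposal is correct and follows essentially the same approach as the paper: the paper's proof simply cites Lemma \ref{2016.09.11.l2}(2) for part (1) and Lemma \ref{2016.09.11.l3} for part (2). Your additional unwinding of the definitions to match the presheaf-level $f^*$ with the object- and morphism-over-level $f^*$ is a reasonable elaboration of what the paper leaves implicit.
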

\begin{proof}
The first assertion follows from Lemma \ref{2016.09.11.l2}(2). The second from Lemma \ref{2016.09.11.l3}.
\end{proof}
Lemma \ref{2016.09.11.l4}(1) shows that the family of functions 
$$H\Ob_{i,\Gamma}:\Ob_i(\Gamma)\sr \Ob_i(H(\Gamma))$$
given by $H\Ob_{i,\Gamma}(T)=H(T)$ and defined in view of Lemma \ref{2016.09.11.l1}(1) is a morphism of presheaves
\begin{eq}
\llabel{2016.09.11.eq5}
H\Ob_i:\Ob_i\sr H^{\circ}(\Ob_i)
\end{eq}
Lemma \ref{2016.09.11.l4}(2) shows that the family of functions 
$$H\wOb_{i,\Gamma}:\wOb_i(\Gamma)\sr \wOb_i(H(\Gamma))$$
given by $H\wOb_{i,\Gamma}(o)=H(o)$ and defined in view of Lemma \ref{2016.09.11.l1}(2) is a morphism of presheaves
\begin{eq}
\llabel{2016.09.11.eq6}
H\wOb_i:\Ob_i\sr H^{\circ}(\wOb_i)
\end{eq}

At the conclusion of this section let us raise the issue that one encounters when trying to formalize the reasoning about presheaves both in the Zermelo-Fraenkel theory (ZF) and in the univalent type-theoretic formalization systems such as UniMath. 

If a presheaf is defined as a contravariant functor to the category of sets then in reasoning about presheaves this category of sets must be specified, that is, the set $U=Ob(Sets)$ must be chosen. This creates an extra parameter in the theory and one must quantify over this parameter formulating all statements that mention presheaves as starting with ``for all 
$U$ such that ...  one has ...'' and then separately proving that such an $U$ exists. In some cases one must also include statements about the independence of the obtained result from $U$, that is, statements that start  with ``for all $U_1$, $U_2$ such that ... one has ...''. Clearly, this would make the text very hard to read. 

Remark \ref{2016.09.05.rem1} below may be considered as outlining the beginnings of a small theory that may be used to make the reasoning about presheaves without specifying $U$ fully precise with respect to the ZF. 

Remark \ref{2016.11.08.rem1} addresses the same issue for the univalent style type-theoretic formalization systems on the example of the UniMath. The theory that is needed in this case is different. 

This in itself may be a reason to consider these theories as lying outside of mathematics proper and in the interfaces between mathematics and formal foundations of mathematics.  

Since in this paper we concentrate on the mathematical part of the theory we will continue to talk about presheaves as is customary today without paying attention to the issues discussed in these comments. 
\begin{remark}\rm
\llabel{2016.09.05.rem1}
The arguments in this remark are specific to the Zermelo-Fraenkel theory. Very different arguments related to the same basic issue in the UniMath formalization are discussed in the next remark.

In set theory, any set $U$ defines a category of sets $Sets(U)$ whose set of objects is $U$ and morphisms $X\sr Y$ are functions from $X$ to $Y$, that is,
$$Mor(Sets(U))=\cup_{X,Y\in U} Fun(X,Y)$$
The required functions of domain, codomain etc. are defined because of our choice of the definition of what a function is, which we take to be the one given in \cite[p.81]{Bourbaki.Sets}, where a function $f$ is a triple $(G,X,Y)$ where $X=dom(f)$, $Y=codom(f)$ and $G\subset X\times Y$ is the graph of $f$. 

For most sets $U$ the category $Sets(U)$ will not have the properties usually expected from ``the'' category of sets. In order for $Sets(U)$ to have limits and colimits etc. the set $U$ should be closed under a number of constructions, i.e., to be a ``universe''. In order for our results concerning contravariant functors to $Sets(U)$ to hold one can choose $U$ whose existence can be proved in the ZF. In many papers one requires $U$ to be a Grothendieck universe (see \cite[I, Appendice par N. Bourbaki]{SGA4}), however existence of a Grothendieck universe is a very strong axiom that is not provable in the ZF. 

Instead of working with $U$ as a parameter one can define the notion of a presheaf of sets differently in a way that does not require a choice of any particular category of sets. 

Let us introduce the following terminology. As we said above a function is a triple $(G,X,Y)$ where $G$ is a functional graph such that $pr_1(G)=X$ and $pr_2(G)\subset Y$ (see {\em loc.cit.}). Define a {\em family} (of sets) parametrized by $X$ as a pair $(G,X)$ where $G$ is a functional graph such that $pr_1(G)=X$.

Any family can be extended to a function by choosing $Y=pr_2(G)$. However, choosing $Y$ to be any set that contains $pr_2(G)$ will work as well which shows that a family can be extended to a function in many ways. 

Given a family $A=(G,X)$ and $x\in X$ there exists a unique $y$ such that $(x,y)\in G$. We will write both $A_x$ and $A(x)$ for this $y$. Given two families $A$ and $F$ parametrized by $X$ we say that $F$ is a family of elements of $A_x$ if for all $x\in X$ one has $F(x)\subset A_x$. 

We can now define a presheaf of sets on $\cal C$ as a pair $(F_{Ob},F_{Mor})$ where:
\begin{enumerate}
\item $F_{Ob}$ is a family of sets parametrized by $Ob({\cal C})$,
\item $F_{Mor}$ is a family of functions of the form 
$$F_{Mor}(f):F_{Ob}(codom(f))\sr F_{Ob}(dom(f))$$
parametrized by $Mor({\cal C})$,
\end{enumerate}
and such that the usual axioms of a presheaf hold. We will call the objects just defined ``presheaves'' while the contravariant functors to $Sets(U)$, the $U$-presheaves. 

It is easy to define the notion of a morphism of presheaves, of the identity morphism and of composition of morphisms. 

The usual axioms of a category are satisfied for these definitions, but presheaves so defined do not form a set and therefore there is no ``category of presheaves''.  Instead what is available to us is a collection of notions - objects, morphisms, compositions etc. with the same logical structure as the one we have for elements of the sets with the corresponding names associated with a category. We will call such a collection a ``meta-category''. Until a precise meaning at the level of the formal first order logic is provided for the idea of the logical structure used above ``meta-category'' can not be considered a precise concept, but it can be used as a convenient verbal tool.

That presheaves do not form a category is a disadvantage of this definition. On the other hand a strong advantage of it is that it does not require an additional parameter $U$. 

As is usual, when no confusion is possible we will omit the indexes $Ob$ and $Mor$ at $F$ both for presheaves and for $U$-presheaves. 

If $F$ is a presheaf and $U$ is a set such that for all $X\in Ob({\cal C})$ one has $F_{Ob}(X)\in U$ then there is a unique $U$-presheaf $F_{U}$ such that for all $X\in Ob({\cal C})$ one has $F_U(X)=F(X)$. If $F_{Ob}(X)\in U$ and $G_{Ob}(X)\in U$ for all $X$ then 
$$Mor(F,G)=Mor_{PreShv({\cal C},U)}(F_U,G_U)$$
where $PreShv({\cal C},U)$ is the category of $U$-presheaves. 

If $F_{Ob}=(G,Ob({\cal C}))$ then taking $U=pr_2(G)$ we can define $F_U$ as above. This proves that for any presheaf there exists $U$ and a $U$-presheaf $F_U$ corresponding to $F$. The same can be shown, using the union axiom, for any family of presheaves parametrized by a set, that is, for any such family there exists a single $U$ such that all members of the family have the corresponding $U$-presheaves. 

In this paper we will be interested in two properties of diagrams of presheaves and their morphisms - when such a diagram is commutative and when, if this diagram is a square, it is a pullback.

Since presheaves do not form a category the latter requires a definition. 
Fortunately, the powerful tool of unbounded quantification is available to us in the ZF and using it we can directly transport the categorical definition of what a pullback is to the meta-category of presheaves. 

A criterion that only requires bounded quantification for when a commutative square of presheaves is a pullback is given in Lemma \ref{2016.08.02.l2}.

In addition one can show that a square $S$ of presheaves that has the corresponding square $S_U$ of $U$-presheaves for a set $U$ that contains the sets $Mor_{\cal C}(X,Y)$ for all $X,Y\in Ob({\cal C})$ is a pullback if and only if $S_U$ is a pullback in $PreShv({\cal C},U)$.

\comment{One can define a different concept, let us call it presheaf', that will not require $U$ in its definition and that will be for most purposes equivalent to the concept of a presheaf. Equivalence here is understood informally, in the sense that it can be used anywhere where the concept of a presheaf is used. 

Such a definition would start not with a function $F:Ob(C)\sr U$ but with a set $FF$ and a function $ff:FF\sr Ob(C)$. The connection to the usual concept is obtained by defining $U$-functions $ff$ as the ones such that $ff^{-1}(X)\in U$ for all $X$.  To a presheaf' $ff$ where $ff$ is a $U$-function one associates a functor $C\sr Sets(U)$ given on objects by $X\mapsto ff^{-1}(X)$. 

It is easy to see that in general one does not get all possible functors $C\sr Sets(U)$ from presheaves' because the sets $ff^{-1}(X)$ must be disjoint for different $X$. However, properly defined category of presheaves' will be equivalent to the category of presheaves defined as functors. 

This is well known, but to the best of my knowledge not written up anywhere remaining a mathematical folklore. }
\end{remark}
\begin{remark}\rm\llabel{2016.11.08.rem1}
In UniMath the choice of universes is a necessity early on in the theory. Type theory does not provide any means  for unbounded quantification. Therefore, to quantify over C-systems as it is required in theorems that start with ``for any C-system'' one has to quantify over some type of sets of which the sets of objects and morphisms of these C-systems are elements. To construct such a type of sets one has first to choose a universe $U$ of types.  

To define the category of sets of which this type of sets is the type of objects requires choosing a second universe of types $U'$ and an element $u$ of $U'$ such that $U=El(u)$ where $El$ is the constructor that associates to an element of a universe the corresponding type. Therefore, to define presheaves of sets in the UniMath as functors to a category of sets one has to work with two universes. One can, alternatively, define the type of presheaves directly, using $U$ only as a type without having a need for an element $u$ of a universe such that $U=El(u)$. Such direct definition is somewhat analogous to the definition that we provide in the ZF but uses 
the inherent mechanisms of working with families that exist in all dependent type theories instead of our set-theoretic definition of a family given above. 
\end{remark}

\subsection{Products of families of types and $(\Pi,\lambda)$-structures}
\label{Sec.2.3}

The ``products of families of types'' structure on a C-system is defined in \cite[pp.3.37 and 3.41]{Cartmell0} and studied further in \cite[p.71]{Streicher}. We will call it here the Cartmell-Streicher structure. Let us recall its definition. We write $Ob$ for $Ob(CC)$, $\wOb$ for the set
$$\wOb=\{o\in Mor(CC)\,|\,o\in sec(p_{codom(o)}),\,\,codom(o)>pt\}$$
and $\partial:\wOb\sr \Ob$ for the function $\partial(o)=codom(o)$.

The notation $Ob_{\ge 2}$ below refers to the set of $B\in Ob$ such that $l(B)\ge 2$. Everywhere below, for $B\in Ob_{\ge 2}$ we let $A=ft(B)$ and $\Gamma=ft^2(B)$.
\begin{definition}
\llabel{2015.03.17.def1}
A Cartmell-Streicher structure on a C-system $CC$ is a collection of data of the form
\begin{enumerate}
\item a function $\PPi:Ob_{\ge 2}\sr Ob$ such that for any $B\in Ob_{\ge 2}$ one has 
\begin{enumerate}
\item $ft(\PPi(B))=\Gamma$,
\item for any $f:\Gamma'\sr \Gamma$ one has $f^*(\PPi(B))=\PPi(f^*(B))$,
\end{enumerate}
\item for any $B\in Ob_{\ge 2}$ a morphism $Ap_B:A\times_{\Gamma}\PPi(B)\sr B$\,\footnote{In \cite[Definition 1.13, p.71]{Streicher} these morphisms are denoted $eval_B$.} such that
\begin{enumerate}
\item $Ap_B$ is a morphism over $A$,
\item for any $f:\Gamma'\sr \Gamma$ one has $f^*(Ap_B)=Ap_{f^*(B)}$,
\item the function $\lambda inv_{B}:\partial^{-1}(\PPi(B))\sr \partial^{-1}(B)$ given by
$$\lambda inv_B(s)=p_A^*(s)\circ Ap_B$$
is a bijection.
\end{enumerate} 
\end{enumerate} 
\end{definition}
The objects appearing in this definition can be seen in the following diagram
\begin{eq}\llabel{2016.05.16.eq10}
\begin{CD}
B @<Ap_B<< A\times_{\Gamma}\PPi(B) @>>> \PPi(B)\\
@Vp(B,A)VV @Vp_A^*(s)\uparrow VV @Vs\uparrow V p(\PPi(B),\Gamma)V\\
A @= A @>p_A>> \Gamma
\end{CD}
\end{eq}
The proper formulation of Definition \ref{2015.03.17.def1} requires a preliminary lemma, as we have done with Lemma \ref{2016.08.14.l2} before the definition of sets $AllAp_2^{\Pi}$. Here we provide this lemma after the definition.
\begin{lemma}\llabel{2016.08.14.l1}
Let $\PPi:Ob_{\ge 2}\sr Ob$ be a function as in Definition \ref{2015.03.17.def1}(1), $B\in Ob_{\ge 2}$ and $Ap_B:A\times_{\Gamma}\PPi(B)\sr B$ a morphism. Assume that $Ap_B$ is a morphism over $A$, then one has:
\begin{enumerate}
\item $Ap_B$ is a morphism over $\Gamma$,
\item For $s\in \partial^{-1}(\PPi(B))$ the morphism $\lambda inv_B(s)=p_A^*(s)\circ Ap_B$ is defined and belongs to $\partial^{-1}(B)$.
\end{enumerate}
\end{lemma}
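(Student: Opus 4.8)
The plan is to prove the two assertions in sequence, using only the basic calculus of morphisms over an object from the earlier part of Section \ref{Sec.2.1}. For part (1), I would invoke Lemma \ref{2016.08.18.l2}: we are given that $Ap_B$ is a morphism over $A$, and since $B\in Ob_{\ge 2}$ we have $A=ft(B)$ and $\Gamma=ft^2(B)$, so $A\ge \Gamma$ (indeed $A>\Gamma$). By Lemma \ref{2016.08.18.l2}, a morphism over $A$ where $A$ is an object over $\Gamma$ is automatically a morphism over $\Gamma$. Here one should note that both the source $A\times_{\Gamma}\PPi(B)$ and the target $B$ are objects over $A$ — the source because $X\times_{\Gamma}Y\ge X$ always, and the target because $B\ge A=ft(B)$ — so the hypothesis of being a morphism over $A$ is meaningful, and the conclusion follows immediately.

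For part (2) I would first check that $\lambda inv_B(s)=p_A^*(s)\circ Ap_B$ is a well-defined composable pair. Given $s\in \partial^{-1}(\PPi(B))$, i.e. $s\in sec(p_{\PPi(B)})$ with $codom(s)=\PPi(B)$, we have $s:\Gamma\sr \PPi(B)$, and $s$ is a morphism over $\Gamma$ by Lemma \ref{2016.09.21.l1}(2) (using $ft(\PPi(B))=\Gamma$ from Definition \ref{2015.03.17.def1}(1a), so $\PPi(B)>\Gamma$). Then $p_A:A\sr \Gamma$ is a morphism over $\Gamma$ since $A>\Gamma$, and by Lemma \ref{2016.08.06.l1} the morphism $p_A^*(s):p_A^*(\Gamma)\sr p_A^*(\PPi(B))$ over $A$ is defined. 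Now $p_A^*(\Gamma)=A$ by the first half of (\ref{2016.08.18.eq5}), and $p_A^*(\PPi(B))=p(A,\Gamma)^*(\PPi(B))=A\times_{\Gamma}\PPi(B)$ by the definition of $\times_{\Gamma}$; hence $p_A^*(s):A\sr A\times_{\Gamma}\PPi(B)$, which is composable with $Ap_B:A\times_{\Gamma}\PPi(B)\sr B$, giving $\lambda inv_B(s):A\sr B$.

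It remains to show $\lambda inv_B(s)\in \partial^{-1}(B)$, that is, $\lambda inv_B(s)\in sec(p_B)$ with $codom(\lambda inv_B(s))=B$; the codomain condition was just checked. For the section condition, since $p_A^*(s)$ is a morphism over $A$ it satisfies $p_A^*(s)\circ p(A\times_{\Gamma}\PPi(B),A)=p(A,A)=Id_A$, i.e. $p_A^*(s)\circ p_{A\times_{\Gamma}\PPi(B)}=Id_A$. By part (1), $Ap_B$ is a morphism over $\Gamma$; combined with the fact that it is a morphism over $A$, we get $Ap_B\circ p(B,A)=p(A\times_{\Gamma}\PPi(B),A)$, i.e. $Ap_B\circ p_B=p_{A\times_{\Gamma}\PPi(B)}$ (using that $p(B,A)=p_B$ since $A=ft(B)$, and similarly on the left). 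Therefore
$$
\lambda inv_B(s)\circ p_B = p_A^*(s)\circ Ap_B\circ p_B = p_A^*(s)\circ p_{A\times_{\Gamma}\PPi(B)} = Id_A,
$$
so $\lambda inv_B(s)\in sec(p_B)=sec(p(B,A))$, hence $\lambda inv_B(s)\in \partial^{-1}(B)$.

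I do not expect any serious obstacle here: the statement is essentially a bookkeeping lemma that justifies the well-definedness of the map appearing in Definition \ref{2015.03.17.def1}(2c). The one place requiring a little care is getting the source and target of $p_A^*(s)$ right — identifying $p_A^*(\Gamma)$ with $A$ and $p_A^*(\PPi(B))$ with $A\times_{\Gamma}\PPi(B)$ via the definitions — and making sure that $B>A$ and $\PPi(B)>\Gamma$ so that the relevant $p$-morphisms and the characterization of sections as morphisms over the base (Lemma \ref{2016.09.21.l1}) actually apply.
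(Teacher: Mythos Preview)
Your proof is correct and follows essentially the same approach as the paper: part (1) via Lemma \ref{2016.08.18.l2}, and part (2) by checking well-definedness of $p_A^*(s)$ (with the right domain/codomain identifications) and then computing $\lambda inv_B(s)\circ p_B=Id_A$ using that $Ap_B$ is over $A$ and that $p_A^*(s)$ is a section of $p(A\times_{\Gamma}\PPi(B),A)$. The only cosmetic difference is that the paper cites Lemma \ref{2016.09.21.l1}(1) and (2) directly for the section properties where you unpack the ``morphism over $A$'' condition by hand, and your invocation of part (1) in the final computation is unnecessary (the ``over $A$'' hypothesis alone gives $Ap_B\circ p_B=p(A\times_{\Gamma}\PPi(B),A)$).
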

\begin{proof}
The first assertion follows from Lemma \ref{2016.08.18.l2}. 

To show that $\lambda inv_B(s)$ is defined we need to check that $p_A^*(s)$ is defined, i.e., that $s$ is a morphism over $\Gamma$ and that $codom(p_A^*(s))=dom(Ap_B)$. We have
$$dom(s)=ft(\partial(s))=ft(\PPi(B))=\Gamma$$
and therefore $s$ is a morphism over $\Gamma$ by Lemma \ref{2016.09.21.l1}(1) and
$$codom(p_A^*(s))=p_A^*(codom(s))=A\times_{\Gamma}\PPi(B)=dom(Ap_B)$$

To prove that $\lambda inv_B(s)$ belongs to $\partial^{-1}(B)$ we need to show that $\lambda inv_B(s)\in \wOb(CC)$ and $codom(\lambda inv_B(s))=B$. The second statement is obvious. To prove the first one we need to show that $\lambda inv_B(s)\circ p(B,A)=Id_{A}$. We have
$$\lambda inv_B(s)\circ p(B,A)=p_A^*(s)\circ Ap_B\circ p(B,A)=$$$$
p_A^*(s)\circ p(A\times_{\Gamma}\PPi(B),A)=Id_{A}$$
where the second equality follows from the assumption that $Ap_B$ is a morphism over $B$ and the third equality follows from Lemma \ref{2016.09.21.l1}(2) for $f=p_A$. 

The lemma is proved. 
\end{proof}
\begin{definition}
\llabel{2015.03.09.def1}\llabel{2015.03.09.def2}
Let $CC$ be a C-system. A pre-$(\Pi,\lambda)$-structure on $CC$ is a pair of morphisms (natural transformations) of presheaves 
$$\Pi:\Ob_2\sr \Ob_1$$
$$\lambda:\wtOb_2\sr \wtOb_1$$
such that the square
\begin{eq}
\llabel{2015.03.09.eq1}
\begin{CD}
\wtOb_2@>\lambda>> \wtOb_1\\
@V\partial VV @VV \partial V\\
\Ob_2 @>\Pi>> \Ob_1
\end{CD}
\end{eq}
commutes.  

A pre-$(\Pi,\lambda)$-structure is called a $(\Pi,\lambda)$-structure if the square (\ref{2015.03.09.eq1}) is a pullback.
\end{definition}
\begin{remark}\rm\llabel{2016.09.25.rem1}
The category of presheaves of sets can be given the structure of a category with fiber products (see \cite[Appendix]{Pilambda} for the precise definition and for the notations used below) using the standard structure of a category with fiber products on the category of sets.

Then any pre-$(\Pi,\lambda)$-structure on $CC$ defines a morphism 
\begin{eq}
\llabel{2016.09.25.eq2}
(\partial\times\lambda)^{\Pi,\partial}:\wtOb_2\sr (\Ob_2,\Pi)\times_{\Ob_1}(\wtOb_1,\partial)
\end{eq}
which is an isomorphism if and only if this pre-$(\Pi,\lambda)$-structure is a $(\Pi,\lambda)$-structure. 

Therefore, there is a bijection between $(\Pi,\lambda)$-structures and $(\Pi,\lambda,app)$-structures where $\Pi$ and $\lambda$ form a pre-$(\Pi,\lambda)$-structure and $app$ is a morphism that is both a right and a left inverse to (\ref{2016.09.25.eq2}).   

One also obtains interesting structures by specifying in addition to $\Pi$ and $\lambda$ a morphism $app$ over $\Ob_2$ that is only a left or only a right inverse to (\ref{2016.09.25.eq2}). 

The $(\Pi,\lambda)$-structures are connected to the $(\Pi,\lambda,app,\beta,\eta)$-system of inference rules. 

The $(\Pi,\lambda,app)$-structures where $app$ is a right inverse to (\ref{2016.09.25.eq2}), that is, a morphism in the opposite direction such that
$$app\circ (\partial\times\lambda)^{\Pi,\partial}=Id_{(\Ob_2,\Pi)\times_{\Ob_1}(\wtOb_1,\partial)}$$
correspond to the $(\Pi,\lambda,app,\eta)$-system of inference rules.  

The $(\Pi,\lambda,app)$-structures where $app$ is a left inverse to (\ref{2016.09.25.eq2}), that is, a morphism in the opposite direction such that
$$(\partial\times\lambda)^{\Pi,\partial}\circ app=Id_{\wtOb_2}$$
correspond to the $(\Pi,\lambda,app,\beta)$-systems of inference rules.

Precise formulations and proofs of these correspondences require an algebraic theory of inference rules that lies outside the scope of the present paper. 

Syntactically, even a pre-$(\Pi,\lambda)$-structure may have different forms. For example, given the $\Pi$-operations defined by the scheme
$$
\frac{\Gamma,x:A\rhd B\,type}{\Gamma\rhd \Pi(A,x.B)\,type}
$$
we can have $\lambda$-operations defined by either of the two schemes
$$
\frac{\Gamma,x:A\rhd o:B}{\Gamma\rhd \lambda(A,x.o):\Pi(A,x.B)}\spc\spc{\rm or}\spc\spc  \frac{\Gamma,x:A\rhd o:B}{\Gamma\rhd \lambda(x.o):\Pi(A,x.B)}
$$
In both cases we obtain a pre-$(\Pi,\lambda)$-structure. The choice of the syntactic form for $\lambda$ may affect whether the resulting operation is in an appropriate sense free, but not whether it forms a part of a pre-$(\Pi,\lambda)$-structure. 

Operation $app$ that directly corresponds to a morphism in the direction opposite to (\ref{2016.09.25.eq2}) can have the syntactic form
\begin{eq}
\llabel{2016.09.29.eq1}
\frac{
\begin{array}{l}
\Gamma,x:A\rhd B\,type\\
\Gamma\rhd f:\Pi(A,x.B)
\end{array}}
{\Gamma,x:A\rhd app(f,x):B}
\end{eq}
or the form with $app(A,x.B,f,x)$ or any of the two intermediate forms. 

To give a precise meaning to these comments about the syntactic forms one needs a theory of syntactic C-systems. The basics of such a theory can be found in \cite{CandJf}.
\end{remark}

A homomorphism of C-systems with (pre-)$(\Pi,\lambda)$-structures is defined in \cite{Pilambda}.

In this section we construct a solution for the following problem.
\begin{problem}\llabel{2016.08.10.prob2fromold}
Let $CC$ be a C-system. To construct a bijection between the set of Cartmell-Streicher structures and the set of $(\Pi,\lambda)$-structures on $CC$.
\end{problem}
Later in the paper we will show how to construct and, sometimes, fully classify, $(\Pi,\lambda)$-structures on C-systems of the form $CC({\cal C},p)$. 

Constructing bijections is often very ``expensive'' in the sense of the time and effort required. This fact will be well illustrated by the construction of this section. 

A structure of Cartmell-Streicher on $CC$ can be seen as a pair $(\PPi,Ap)$ where $\PPi$ is a function $Ob_{\ge 2}\sr Ob$ satisfying certain conditions and $Ap$ is a function $Ob_{\ge 2}\sr Mor$ satisfying another set of conditions that depend on $\PPi$.

A $(\Pi,\lambda)$-structure is a pair $(\Pi,\lambda)$ where $\Pi$ is a morphism of presheaves $\Ob_2\sr \Ob_1$ and $\lambda$ is a morphism of presheaves $\wtOb_2\sr \wtOb_1$ satisfying certain conditions that depend on $\Pi$. 

Substituting $i=2$ and $j=1$ in Construction (\ref{2016.06.17.constr1}) we obtain a bijection $\Phi$ from the set of morphisms of presheaves on $CC$ of the form $\Pi:\Ob_2\sr \Ob_1$ to the set of functions $\PPi:Ob_{\ge 2}\sr Ob$ satisfying the conditions of Definition \ref{2015.03.17.def1}(1). 

Let $All\lambda_1^{\Pi}$ be the set of morphisms $\lambda:\wtOb_2\sr \wtOb_1$ that make the square (\ref{2015.03.09.eq1}) a pullback. 

Let $AllAp_1^\PPi$ be the set of functions $Ap:Ob_{\ge 2}\sr Mor$ that satisfy the conditions of Definition \ref{2015.03.17.def1} relative to $\PPi$. 

It remains to construct, for any morphism of presheaves $\Pi:\Ob_2\sr \Ob_1$,  a bijection of the form $All\lambda_1^{\Pi}\sr AllAp_1^{\Phi(\Pi)}$. 

Our bijection will be the composition of three bijections
\begin{eq}\llabel{2016.08.14.eq4}
All\lambda_1^{\Pi}\sr All\lambda_2^{\Pi}\sr AllAp_2^{\Phi(\Pi)}\sr AllAp_1^{\Phi(\Pi)}
\end{eq}
To define the set $AllAp_2^{\Phi(\Pi)}$ we need a lemma.
\begin{lemma}
\llabel{2016.08.14.l2}
Let $\PPi$ be a function $Ob_{\ge 2}\sr Ob$ satisfying the conditions of Definition \ref{2015.03.17.def1}(1). Let $B\in Ob_{\ge 2}$ and let $Ap_B:\PPi(B)\times_{\Gamma}A\sr B$ be a morphism such that 
\begin{eq}\llabel{2016.08.12.eq1}
Ap_B\circ p_B=q(p_{\PPi(B)},A)
\end{eq}
then one has:
\begin{enumerate}
\item $Ap_B$ is a morphism over $\Gamma$,
\item for any $s\in \partial^{-1}(\PPi(B))$  the morphism   
\begin{eq}\llabel{2016.08.10.eq1}
\lambda inv_{B}(s)=q(s,\PPi(B)\times_{\Gamma}A)\circ Ap_B
\end{eq}
is defined and belongs to $\partial^{-1}(B)$.
\end{enumerate}
\end{lemma}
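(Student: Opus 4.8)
The two assertions mirror those of Lemma \ref{2016.08.14.l1}, with the factors in the fiber product exchanged, so the plan is to follow the same structure while being careful that now $Ap_B$ lives on $\PPi(B)\times_{\Gamma}A$ rather than on $A\times_{\Gamma}\PPi(B)$. For the first assertion, note that $\PPi(B)\ge\Gamma$ by Definition \ref{2015.03.17.def1}(1)(a), hence $\PPi(B)\times_{\Gamma}A\ge \PPi(B)\ge\Gamma$, and $B\ge A\ge\Gamma$, so both the domain and the codomain of $Ap_B$ are objects over $\Gamma$; it then remains to check the identity $Ap_B\circ p(B,\Gamma)=p(\PPi(B)\times_\Gamma A,\Gamma)$. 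This follows from hypothesis (\ref{2016.08.12.eq1}): compose it with $p(A,\Gamma)$ on the right, use (\ref{2015.06.11.eq4}) to rewrite $p_B\circ p(A,\Gamma)=p(B,\Gamma)$ on the left side and $q(p_{\PPi(B)},A)\circ p(A,\Gamma)$ via the commutativity of the canonical square (the second square in (\ref{2016.09.15.eq1}), with the roles of $X,Y$ played by $A,\PPi(B)$) to get $p(\PPi(B),\Gamma)\circ p_{\PPi(B)} = p(\PPi(B)\times_\Gamma A,\Gamma)$, again by (\ref{2015.06.11.eq4}). Alternatively, and more cheaply, one can observe that (\ref{2016.08.12.eq1}) says exactly that $Ap_B$ is a morphism over $\PPi(B)$ (composing with $p_B=p(B,A)$ lands back on the projection $q(p_{\PPi(B)},A)=p(\PPi(B)\times_\Gamma A,\PPi(B))$ — wait, one must check this projection identification), and then invoke Lemma \ref{2016.08.18.l2} since $\PPi(B)\ge\Gamma$, exactly as in Lemma \ref{2016.08.14.l1}. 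I would go with whichever identification of $q(p_{\PPi(B)},A)$ with a $p$-morphism over $\PPi(B)$ is cleanest; by Lemma \ref{2015.06.15.l1} applied to the square for $\PPi(B)\times_\Gamma A$, the morphism $q(p(\PPi(B),\Gamma),A)$ is the projection to $A$ and $p(\PPi(B)\times_\Gamma A,\PPi(B))$ the projection to $\PPi(B)$, so (\ref{2016.08.12.eq1}) literally states $Ap_B$ is over $\PPi(B)$, and Lemma \ref{2016.08.18.l2} finishes it.

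For the second assertion I would first check that $\lambda inv_B(s)$ is defined, i.e. that the composite in (\ref{2016.08.10.eq1}) makes sense. Since $s\in\partial^{-1}(\PPi(B))$ we have $codom(s)=\PPi(B)$ with $\PPi(B)>\Gamma$ (as $ft(\PPi(B))=\Gamma$ forces $l(\PPi(B))=l(\Gamma)+1$ when $\PPi(B)$ is a genuine object above $\Gamma$ — this uses that sections in $\wOb$ have codomain strictly above the base), hence $dom(s)=ft(\PPi(B))=\Gamma$ and $s$ is a morphism over $\Gamma$ by Lemma \ref{2016.09.21.l1}(1); therefore $q(s,\PPi(B)\times_\Gamma A)$ is defined since $\PPi(B)\times_\Gamma A\ge \PPi(B)\ge\Gamma$ and $s:\Gamma\to\PPi(B)$. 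Here $\Gamma$ plays the role of the base, $\PPi(B)$ an intermediate object, and we pull back $\PPi(B)\times_\Gamma A$ along $s$; its codomain is $\PPi(B)\times_\Gamma A = dom(Ap_B)$, so the composition with $Ap_B$ is legitimate and $codom(\lambda inv_B(s))=codom(Ap_B)=B$.

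It then remains to show $\lambda inv_B(s)\in\wOb(CC)$, i.e. that $\lambda inv_B(s)\in sec(p_B)$, equivalently $\lambda inv_B(s)\circ p(B,A)=Id_{dom(\lambda inv_B(s))}$. The domain of $\lambda inv_B(s)$ is $s^*(\PPi(B)\times_\Gamma A)$, and by Lemma \ref{2016.09.21.l2} (or directly, Lemma \ref{2015.06.11.l2} together with $s^*(\PPi(B))=\Gamma$ since $s$ is a section of $p_{\PPi(B)}$) one has $s^*(\PPi(B)\times_\Gamma A)=s^*(\PPi(B))\times_\Gamma s^*(A)$; but $s^*(\PPi(B))=\Gamma$ and so, using Lemma \ref{2016.09.25.l1} ($\Gamma$ is a strict unit), this equals $A$ provided $s^*(A)=A$ — which holds because $A\ge\Gamma$ and $s$ composed with $p_{\PPi(B)}$ is the identity, so $s^*(A)=s^*(p_{\PPi(B)}^{\,*}(A))=(s\circ p_{\PPi(B)})^*(A)=Id_\Gamma^*(A)=A$ by Lemma \ref{2015.06.11.l2}. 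Then, as in Lemma \ref{2016.08.14.l1},
$$\lambda inv_B(s)\circ p(B,A)=q(s,\PPi(B)\times_\Gamma A)\circ Ap_B\circ p(B,A)=q(s,\PPi(B)\times_\Gamma A)\circ q(p_{\PPi(B)},A)=Id_A,$$
where the second equality is hypothesis (\ref{2016.08.12.eq1}) and the third follows from Lemma \ref{2015.06.11.l2}(2) applied to the composite $s\circ p_{\PPi(B)}=Id_\Gamma$, together with $q(Id_\Gamma, A)=Id_A$ (Lemma \ref{2015.06.11.l2}(1)) — indeed $q(s,\PPi(B)\times_\Gamma A)\circ q(p_{\PPi(B)},A)=q(s\circ p_{\PPi(B)},A)=q(Id_\Gamma,A)=Id_A$, using $\PPi(B)\times_\Gamma A=p_{\PPi(B)}^{\,*}(A)$ to see $q(p_{\PPi(B)},A)$ has the right domain for this composition law. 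The main obstacle, as usual in this style of argument, is purely bookkeeping: correctly identifying which object $\PPi(B)\times_\Gamma A$ is being pulled back along $s$, confirming $q(p_{\PPi(B)},A)$ coincides with the projection $p(\PPi(B)\times_\Gamma A,\PPi(B))$ and has domain $p_{\PPi(B)}^{\,*}(A)$, and making sure the $q$-composition identity of Lemma \ref{2015.06.11.l2}(2) applies with the correct object in the slot. Once these identifications are pinned down, the computation is mechanical.
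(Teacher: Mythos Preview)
Your direct computation for part (1) and your argument for part (2) are correct and match the paper's proof essentially line for line: the paper also composes (\ref{2016.08.12.eq1}) with $p_A$ and uses the commutativity of the canonical square to reach $p(\PPi(B)\times_\Gamma A,\Gamma)$, and for part (2) it checks well-definedness and then computes $\lambda inv_B(s)\circ p_B = q(s,\PPi(B)\times_\Gamma A)\circ q(p_{\PPi(B)},A)=q(Id_\Gamma,A)=Id_A$ exactly as you do. (Minor slip: in your direct computation you wrote $p(\PPi(B),\Gamma)\circ p_{\PPi(B)}$ where you meant $p(\PPi(B)\times_\Gamma A,\PPi(B))\circ p_{\PPi(B)}$.)

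However, your proposed ``alternative, more cheaply'' route for part (1) is wrong and should be deleted. The claim that (\ref{2016.08.12.eq1}) ``literally states $Ap_B$ is over $\PPi(B)$'' fails on two counts. First, the codomain $B$ of $Ap_B$ is not an object over $\PPi(B)$ at all ($B\ge A\ge\Gamma$, but there is no relation $B\ge\PPi(B)$), so the phrase ``$Ap_B$ is a morphism over $\PPi(B)$'' is not even well-formed in the sense of Definition \ref{2016.08.06.def2}, and Lemma \ref{2016.08.18.l2} cannot be invoked. Second, $q(p_{\PPi(B)},A)$ is the projection $\PPi(B)\times_\Gamma A\to A$, \emph{not} the projection $p(\PPi(B)\times_\Gamma A,\PPi(B))$ to $\PPi(B)$; you in fact note this yourself a few lines later, so the identification you need for the shortcut is simply false. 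Stick with the direct computation, which is what the paper does.
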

\begin{proof}
The first assertion is proved by the equalities
$$Ap_B\circ p_B=Ap_B\circ p_B\circ p_{A}=q(p_{\PPi(B)},A)\circ p_{A}=$$$$p_{\PPi(B)\times_{\Gamma}A}\circ p_{\PPi(B)}=p(\PPi(B)\times_{\Gamma}A,\Gamma)$$
where the second equality follows from (\ref{2016.08.12.eq1}) and the third one from the commutativity of the right hand side canonical square in the diagram:
$$
\begin{CD}
A @>q(s,\PPi(B)\times_{\Gamma}A)>> \PPi(B)\times_{\Gamma}A@>q(p_{\PPi(B)},A)>> A\\
@Vp_{A}VV @Vp_{\PPi(B)\times_{\Gamma}A}VV @VVp_{A}V\\
\Gamma @>s>> \PPi(B) @>p_{\PPi(B)}>> \Gamma
\end{CD}
$$

For $\lambda inv_{B}(s)$ to be defined we need to have
$$codom(p_{\PPi(B)})=\Gamma$$
$$codom(s)=ft(\PPi(B)\times_{\Gamma}A)$$
$$codom(q(s,\PPi(B)\times_{\Gamma}A))=\PPi(B)\times_{\Gamma}A$$
The first equality follows from Definition \ref{2015.03.17.def1}(1a), the second equality follows from the equality $ft(f^*(X))=codom(f)$, which is one of the axioms of a C0-system, and the assumption that $s\in \partial^{-1}(\PPi(B))$. The third equality follows directly from the form of the morphisms $q(f,X)$. 

To prove that $\lambda inv_{B}(s)$ belongs to $\partial^{-1}(B)$ we need to show that $\lambda inv_{B}(s)\in \wOb(CC)$ and that $codom(\lambda inv_{B}(s))=B$. The second equality is obvious. To prove the first fact we need to show that
$$\lambda inv_{B}(s)\circ p_B=Id_{A}$$
We have:
$$\lambda inv_{B}(s)\circ p_B=q(s,\PPi(B)\times_{\Gamma}A)\circ Ap_B\circ p_B=q(s,\PPi(B)\times_{\Gamma}A)\circ q(p_{\PPi(B)},A)=Id_{A}$$
where the second equality is by (\ref{2016.08.12.eq1}) and the third one is by the composition property of the $q(-,-)$ morphisms and the fact that $s\circ p_{\PPi(B)}=Id_{\Gamma}$. The lemma is proved.
\end{proof}

Let us now defined the sets $All\lambda_2^{\Pi}$ and $AllAp_2^\PPi$:
\begin{enumerate}
\item Let $\Pi:\Ob_2\sr\Ob_1$ be a morphism of presheaves. Then the set $All\lambda_2^{\Pi}$ is the set of double families of bijections of the form
\begin{eq}
\llabel{2016.05.20.eq4}
\lambda_{\Gamma,B}:\partial^{-1}(B)\sr \partial^{-1}(\Pi_{\Gamma}(B))
\end{eq}
parametrized by $\Gamma\in Ob$ and $B\in \Ob_2(\Gamma)$ such that for any $f:\Gamma'\sr \Gamma$ and any $B\in\Ob_2(\Gamma)$ the square 
\begin{eq}\llabel{2016.04.25.eq1b}
\begin{CD}
\partial^{-1}(B) @>\lambda_{\Gamma,B} >> \partial^{-1}(\Pi_{\Gamma}(B))\\
@Vf^*_B VV @VV f^*_B V\\
\partial^{-1}(f^*(B)) @>\lambda_{\Gamma',f^*(B)} >> \partial^{-1}(\Pi_{\Gamma'}(f^*(B)))=\partial^{-1}(f^*(\Pi_{\Gamma}(B)))
\end{CD}
\end{eq}
commutes.
\item Let $\PPi:Ob_{\ge 2}\sr Ob$ be a function satisfying the conditions of Definition \ref{2015.03.17.def1}(1). Then the set $AllAp_2^\PPi$ is the set of families of morphisms of the form 
$$Ap_{B}:\PPi(B)\times_{\Gamma}A\sr B$$
parametrized by $B\in Ob_{\ge 2}$ such that:
\begin{enumerate}
\item for any $B$, (\ref{2016.08.12.eq1}) holds,
\item for any $B$ and any morphism $f:\Gamma'\sr \Gamma$ one has $f^*(Ap_B)=Ap_{f^*(B)}$, where $f^*(Ap_B)$ is defined by Lemma \ref{2016.08.14.l2}(1).
\item for any $B$, the function 
$$\lambda inv_{B}:\partial^{-1}(\PPi(B))\sr \partial^{-1}(B)$$
defined by (\ref{2016.08.10.eq1}) is a bijection.
\end{enumerate}
\end{enumerate}

Let us construct now the bijections of the sequence (\ref{2016.08.14.eq4}).
\begin{problem}
\llabel{2016.10.03.prob1}
For a given morphism of presheaves $\Pi:\Ob_2\sr\Ob_1$ to construct a bijection $All\lambda_1^{\Pi}\sr All\lambda_2^{\Pi}$.
\end{problem}
\begin{construction}\rm\llabel{2016.10.03.constr1}
We define our bijection as the particular case of the bijection $(\Psi^N_{{\cal D},0},\Phi^N_{{\cal D},0})$ of Lemma \ref{2016.07.07.l2} corresponding to the diagram of the form (\ref{2016.06.09.eq6}) where $a=\partial$, $b=\partial$ and $P=\Pi$.
\end{construction}

For a morphism of presheaves $\Pi:\Ob_2\sr\Ob_1$, the function $\Phi(\Pi):Ob_{\ge 2}\sr Ob$ is of the form
$$\Phi(\Pi)(B)=\Pi_{\Gamma}(B)$$
\begin{problem}
\llabel{2015.03.13.prob1}
Let $CC$ be a C-system and let $\Pi:\Ob_2\sr \Ob_1$ be a morphism of presheaves.
To construct a bijection between the sets  $All\lambda_2^{\Pi}$ and $AllAp_2^{\Phi(\Pi)}$.
\end{problem}
We will construct the solution in four steps - first a function from $All\lambda_2^{\Pi}$ to $AllAp_2^{\Phi(\Pi)}$, then a function in the opposite direction and then  two lemmas proving that the first function is a left and a right inverse to the second. 

Let us denote the function $\Phi(\Pi)$ by $\PPi$. 
\begin{problem}\llabel{2016.08.08.prob1}
Let $\Pi:\Ob_2\sr \Ob_1$ be a morphism of presheaves. To construct a function
$$All\lambda_2^{\Pi}\sr AllAp_2^\PPi$$
\end{problem}
\begin{construction}
\llabel{2015.03.13.constr1}\rm
For each $B\in Ob_{\ge 2}$ we need to construct a morphism of the form
$$Ap_B:\PPi(B)\times_{\Gamma}A\sr B$$
Since $\PPi$ commutes with pullbacks, that is, Definition \ref{2015.03.17.def1}(1b) holds, we have 
$$\PPi(\PPi(B)\times_{\Gamma}B)=\PPi(B)\times_{\Gamma}\PPi(B)$$
and therefore $\delta=s_{Id_{\PPi(B)}}$, which is the diagonal, gives us an element in $\partial^{-1}(\PPi(\PPi(B)\times_{\Gamma}B))$. Applying to it the inverse of $\lambda_{\PPi(B), \PPi(B)\times_{\Gamma}B}$ we get an element 
%### Need to change Ap into ap and ap into something else, because Ap is an element former?
%
\begin{eq}
\llabel{2016.05.02.eq1}
ap_B=(\lambda_{\PPi(B),\PPi(B)\times_{\Gamma}B})^{-1}(\delta)
\end{eq}
in $\partial^{-1}(\PPi(B)\times_{\Gamma}B)$:
$$
\begin{CD}
B @>q(s,p_{\PPi(B)\times_{\Gamma}B})>> \PPi(B)\times_{\Gamma}B @>q(p_{\PPi(B)},B)>> B\\
@Vs^*(ap_B)\,\uparrow VV @Vap_B\,\uparrow VV @VVV\\
A @>q(s,p_{\PPi(B)\times_{\Gamma}A})>>  \PPi(B)\times_{\Gamma}A @>q(p_{\PPi(B)},A)>> A\\
@VVV @VVV @VVV\\
\Gamma @>s>> \PPi(B) @>p_{\PPi(B)}>> \Gamma
\end{CD}
$$
Define:
$$Ap_B=ap_B\circ  q(p_{\PPi(B)},B)$$

Condition (\ref{2016.08.12.eq1}) holds because
$$Ap_B\circ p_B=ap_B\circ q(p_{\PPi(B)},B)\circ p_B=ap_B\circ p_{\PPi(B)\times_{\Gamma}B}\circ q(p_{\PPi(B)},A)=$$
$$Id_{p_{\PPi(B)\times_{\Gamma}B}}\circ q(p_{\PPi(B)},A)=q(p_{\PPi(B)},A)$$

Let us prove that morphisms $Ap_B$ satisfy the conditions of stability under  base change and of the bijectivity of the corresponding functions $\lambda inv_*$. 

Let $f:\Gamma'\sr \Gamma$ be a morphism. Let us show first that $f^*(ap_B)=ap_{f^*(B)}$. Omitting indexes of $\lambda$ for clarity, we have where , 
\begin{eq}
\llabel{2016.04.25.eq4}
\begin{CD}
f^*(ap_B)=f^*(\lambda^{-1}(\delta))=\lambda^{-1}(f^*(\delta))=\lambda^{-1}(s_{f^*(Id_{\PPi(B)})})=\\
\lambda^{-1}(s_{Id_{f^*(\PPi(B))}})=\lambda^{-1}(s_{Id_{\PPi(f^*(B))}})=ap_{f^*(B)}
\end{CD}
\end{eq}
where the second equality follows from the commutativity of (\ref{2016.04.25.eq1b}), the third from (\ref{2016.05.07.eq1}), the fourth from (\ref{2016.04.25.eq3}) and the fifth from Definition \ref{2015.03.17.def1}(1b). 

Now we have:
$$f^*(Ap_B)=f^*(ap_B\circ  q(p_{\PPi(B)},B))=f^*(ap_B)\circ f^*(q(p_{\PPi(B)},B))=ap_{f^*(B)}\circ f^*(q(p_{\PPi(B)},B))=$$$$ap_{f^*(B)}\circ q(f^*(p_{\PPi(B)}),f^*(B))=
ap_{f^*(B)}\circ q(p_{f^*(\PPi(B))},f^*(B))=$$$$ap_{f^*(B)}\circ q(p_{\PPi(f^*(B))},f^*(B))=Ap_{f^*(B)}$$
where the second equality follows from (\ref{2015.06.11.eq2}), the third equality from (\ref{2016.04.25.eq4}), the fourth equality 
from (\ref{2015.06.15.eq2}), the fifth equality from (\ref{2016.05.14.eq1}) and the sixth from Definition \ref{2015.03.17.def1}(1b).

It remains to show that the functions $\lambda inv_{B}:\partial^{-1}(\PPi(B))\sr \partial^{-1}(B)$ defined as:
$$s\mapsto q(s,\PPi(B)\times_{\Gamma}A)\circ Ap_B$$
are bijective. It is sufficient to show that the function $\lambda inv_{B}$ is inverse to the function $\lambda_{\Gamma,B}$ from at least one side as any inverse to a bijection is a bijection. 

We will show that
\begin{eq}\llabel{2016.08.10.eq3}
\lambda inv_{B}\circ \lambda_{\Gamma,B}=Id_{\partial^{-1}(B)}
\end{eq}
For simplicity of notation we omit the index $\Gamma$ of $\lambda$. We proceed in two steps. First, for $s\in \partial^{-1}(\PPi(B))$, let
$$\lambda inv'_B(s)=s^*(ap_B)=q(s,\PPi(B)\times_{\Gamma}A)^*(ap_B)$$
where the second equality is by (\ref{2015.06.11.eq6}). Let us show that $\lambda inv'_B= \lambda inv_{B}$. 
Indeed:
\begin{eq}
\llabel{2016.06.25.eq1int}
\begin{CD}
q(s,\PPi(B)\times_{\Gamma}A)^*(ap_B)=q(s,\PPi(B)\times_{\Gamma}A)^*(ap_B)\circ q(s,\PPi(B)\times_{\Gamma}B)\circ q(p_{\PPi(B)},B)=\\
q(s,\PPi(B)\times_{\Gamma}A)\circ ap\circ q(p_{\PPi(B)},B) = q(s,\PPi(B)\times_{\Gamma}A)\circ Ap_B
\end{CD}
\end{eq}
where the first equality follows from Lemma \ref{2015.06.11.l2}(2) and the assumption that $s\circ \pi_{\PPi(B)}=Id_{\Gamma}$ and the second equality from (\ref{2015.06.11.eq7}).

Now we have:
$$\lambda_B(\lambda inv'_B(s))=\lambda_B(s^*(ap))=s^*(\lambda_{\PPi(B)\times_{\Gamma}B}(ap))=s^*(s_{Id_{\PPi(B)}})=s_{s\circ Id_{\PPi(B)}}=s_s=s$$
where the second equality follows from the commutativity of (\ref{2016.04.25.eq1b}) since $s^*(\PPi(B)\times_{\Gamma}B)=B$, the third from (\ref{2016.05.02.eq1}), the fourth from the formula $s_{f\circ g}=f^*(s_g)$ and the sixth from the formula $s=s_s\circ q(ft(s),X)$ (see \cite[Def. 2.3]{Csubsystems}) since $ft(s)=Id$. This completes Construction \ref{2015.03.13.constr1}.
\end{construction}
\begin{problem}\llabel{2016.08.10.prob1}
Let $\Pi:\Ob_2\sr \Ob_1$ be a morphism of presheaves. To construct a function
$$AllAp_2^\PPi\sr All\lambda_2^{\Pi}$$
\end{problem}
\begin{construction}
\llabel{2015.03.15.constr1}\rm
For $\Gamma\in Ob$ and $B\in\Ob_2(\Gamma)$ we set:
\begin{eq}\llabel{2016.08.10.eq2}
\lambda_{\Gamma,B}=(\lambda inv_{B})^{-1}
\end{eq}
where on the right hand side $B$ is considered as an element of $Ob_{\ge 2}$ such that $ft^2(B)=\Gamma$, and
$$\lambda inv_B:\partial^{-1}(\PPi(B))\sr \partial^{-1}(B)$$
is the bijection given by the formula (\ref{2016.08.10.eq1}).

To show that the bijections that we obtain in this way commute with pullbacks, in the sense that the squares (\ref{2016.04.25.eq1b}) commute, it is sufficient to show that for $\Gamma$ and $B$ as above and $f:\Gamma'\sr \Gamma$ one has 
$$\lambda inv_B\circ f^*=f^*\circ \lambda inv_{f^*(B)}$$
Let $s\in \partial^{-1}(\PPi(B))$, then we have
$$f^*(\lambda inv_{B}(s))=f^*(q(s,\PPi(B)\times_{\Gamma}A)\circ Ap_B)=f^*(q(s,\PPi(B)\times_{\Gamma}A))\circ f^*(Ap_B)=$$
$$q(f^*(s),f^*(\PPi(B)\times_{\Gamma}A))\circ Ap_{f^*(B)}=q(f^*(s),(f^*(p_{\PPi(B)}))^*(f^*(A)))\circ Ap_{f^*(B)}=$$
$$q(f^*(s),f^*(\PPi(B))\times_{\Gamma'}(ft(f^*(B)))\circ Ap_{f^*(B)}= q(f^*(s),\PPi(f^*(B))\times_{\Gamma'}(ft(f^*(B)))\circ Ap_{f^*(B)}=$$$$\lambda inv_{f^*(B)}(f^*(s))$$
where the second equality is by (\ref{2015.06.11.eq2}), the third equality is by (\ref{2015.06.15.eq2}) and the second property of $Ap_*$, the fourth equality is by (\ref{2015.06.15.eq1}), the fifth is by (\ref{2016.05.14.eq1}) and (\ref{2016.05.06.eq1}) and finally the sixth is by the fact that the condition (1b) of Definition \ref{2015.03.17.def1} holds for $\PPi$. This completes Construction \ref{2015.03.15.constr1}.
\end{construction}

Let us denote the function of Construction \ref{2015.03.13.constr1} by $C1$ and the function of Construction \ref{2015.03.15.constr1} by $C2$.
\begin{lemma}
\llabel{2015.03.15.l1}
For $\lambda_{*,*}\in All\lambda_2^{\Pi}$ one has $C2(C1(\lambda_{*,*})_*)_{*,*}=\lambda_{*,*}$. 
\end{lemma}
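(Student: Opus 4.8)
### Proof proposal

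The plan is to unwind both constructions and show that the composite $C2 \circ C1$ is the identity by comparing the two families of bijections componentwise. Fix $\Gamma \in Ob$ and $B \in \Ob_2(\Gamma)$; write $A = ft(B)$. Starting from $\lambda_{*,*} \in All\lambda_2^{\Pi}$, Construction \ref{2015.03.13.constr1} produces $Ap_B = ap_B \circ q(p_{\PPi(B)}, B)$ where $ap_B = (\lambda_{\PPi(B), \PPi(B)\times_\Gamma B})^{-1}(\delta)$ and $\delta = s_{Id_{\PPi(B)}}$; then Construction \ref{2015.03.15.constr1} sets $C2(C1(\lambda_{*,*}))_{\Gamma,B} = (\lambda inv_B)^{-1}$ where $\lambda inv_B(s) = q(s, \PPi(B)\times_\Gamma A)\circ Ap_B$. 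So what I must show is that $\lambda inv_B$ is a two-sided inverse of $\lambda_{\Gamma,B}$, i.e.\ that $\lambda_{\Gamma,B}$ recovers the original family.

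The crucial observation is already essentially carried out inside Construction \ref{2015.03.13.constr1}: the calculation culminating in equation (\ref{2016.08.10.eq3}) shows that $\lambda inv_B \circ \lambda_{\Gamma,B} = Id_{\partial^{-1}(B)}$. I would reproduce this here, or simply cite it. The key chain is: first, via (\ref{2016.06.25.eq1int}), rewrite $\lambda inv_B(s)$ for $s \in \partial^{-1}(\PPi(B))$ as $\lambda inv'_B(s) = s^*(ap_B) = q(s, \PPi(B)\times_\Gamma A)^*(ap_B)$ (using Lemma \ref{2015.06.11.l2}(2), the fact that $s$ is a section, and (\ref{2015.06.11.eq7})). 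Then compute $\lambda_B(\lambda inv'_B(s)) = \lambda_B(s^*(ap_B)) = s^*(\lambda_{\PPi(B)\times_\Gamma B}(ap_B))$ — here the second step is exactly the naturality square (\ref{2016.04.25.eq1b}) applied to the morphism $s : \Gamma \to \PPi(B)$, noting $s^*(\PPi(B)\times_\Gamma B) = B$ since $\PPi(B)\times_\Gamma B = p(\PPi(B),\Gamma)^*(B)$ and $s \circ p_{\PPi(B)} = Id_\Gamma$. Continuing, $s^*(\lambda_{\PPi(B)\times_\Gamma B}(ap_B)) = s^*(\delta) = s^*(s_{Id_{\PPi(B)}}) = s_{s \circ Id_{\PPi(B)}} = s_s = s$, using the definition of $ap_B$, the formula $s_{f \circ g} = f^*(s_g)$, and $s_s = s$ for sections of $p$-morphisms over the base (since $ft(s) = Id$). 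This gives $\lambda inv_B \circ \lambda_{\Gamma,B} = Id$ once we identify $\lambda inv_B$ with $\lambda inv'_B$.

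From $\lambda inv_B \circ \lambda_{\Gamma,B} = Id$ and the fact that both $\lambda_{\Gamma,B}$ (given, being an element of $All\lambda_2^{\Pi}$) and $\lambda inv_B$ (which is a bijection because $Ap_B \in AllAp_2^{\PPi}$, so condition (c) in the definition of $AllAp_2^{\PPi}$ holds) are bijections between the same pair of sets, it follows formally that $\lambda inv_B = (\lambda_{\Gamma,B})^{-1}$, hence $(\lambda inv_B)^{-1} = \lambda_{\Gamma,B}$. Since $C2(C1(\lambda_{*,*}))_{\Gamma,B} = (\lambda inv_B)^{-1}$ by (\ref{2016.08.10.eq2}), we conclude $C2(C1(\lambda_{*,*}))_{\Gamma,B} = \lambda_{\Gamma,B}$ for all $\Gamma, B$, which is the claim.

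I expect no serious obstacle here, since the heart of the argument — the identity (\ref{2016.08.10.eq3}) — has already been established in Construction \ref{2015.03.13.constr1}; the only work left is the bookkeeping of identifying $\lambda inv_B$ with $\lambda inv'_B$ and invoking the elementary fact that a one-sided inverse between bijections is the two-sided inverse. The mildest point of care is making sure that the naturality square (\ref{2016.04.25.eq1b}) is being applied to the correct morphism $s$ and that the base-change identity $s^*(\PPi(B)\times_\Gamma B) = B$ is justified; this is immediate from the definitions of $\times_\Gamma$ and of a section.
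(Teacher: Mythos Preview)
Your proposal is correct and follows essentially the same approach as the paper: unwind $C2\circ C1$ to $(\lambda inv_B)^{-1}$, invoke the identity $\lambda inv_B\circ\lambda_{\Gamma,B}=Id$ already established in Construction~\ref{2015.03.13.constr1}, and use bijectivity to conclude that $(\lambda inv_B)^{-1}=\lambda_{\Gamma,B}$. The only minor difference is that you invoke bijectivity of both $\lambda_{\Gamma,B}$ and $\lambda inv_B$, whereas the paper notes that bijectivity of $\lambda_{\Gamma,B}$ alone suffices (a bijection has a unique one-sided inverse).
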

\begin{proof}
This is almost a tautology, but we will provide a detailed argument for it.  We need to show that for any $\Gamma\in Ob$ and $B\in\Ob_2(\Gamma)$ we have
$$C2(C1(\lambda_{*,*}))_{\Gamma,B}=\lambda_{\Gamma,B}$$
By (\ref{2016.08.10.eq2}) we have
$$C2(C1(\lambda_{*,*}))_{\Gamma,B}=(\lambda inv_B)^{-1}$$
where $\lambda inv_*$ is defined by (\ref{2016.08.10.eq1}) with $Ap_*=C1(\lambda_{*,*})_*$, that is, it is the same family of functions as appear in (\ref{2016.08.10.eq3}) and therefore we know that 
$$C2(C1(\lambda_{*,*}))_{\Gamma,B}=\lambda_{\Gamma,B}$$
because $\lambda_{\Gamma,B}$ is a bijection and a bijection has only one left inverse.
\end{proof}
\begin{lemma}
\llabel{2015.03.15.l2}
For $Ap_*\in AllAp_2^\PPi$ one has $C1(C2(Ap_*)_{*,*})_*=Ap_*$. 
\end{lemma}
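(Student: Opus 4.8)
The plan is to mirror the argument of Lemma \ref{2015.03.15.l1}, exploiting the same observation that a bijection has a unique two-sided inverse, but now running it in the opposite direction. Fix $B\in Ob_{\ge 2}$, and as usual set $A=ft(B)$, $\Gamma=ft^2(B)$. Write $\lambda_{*,*}=C2(Ap_*)$, so that by (\ref{2016.08.10.eq2}) each $\lambda_{\Gamma,B}$ is by definition $(\lambda inv_B)^{-1}$, where $\lambda inv_B$ is the bijection $\partial^{-1}(\PPi(B))\sr\partial^{-1}(B)$ attached to $Ap_*$ by formula (\ref{2016.08.10.eq1}). I then need to show that the morphism $Ap'_B$ produced by running Construction \ref{2015.03.13.constr1} on this $\lambda_{*,*}$ agrees with the original $Ap_B$.

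The key step is to identify $Ap'_B$ concretely. By Construction \ref{2015.03.13.constr1}, $Ap'_B = ap'_B\circ q(p_{\PPi(B)},B)$ where $ap'_B=(\lambda_{\PPi(B),\PPi(B)\times_{\Gamma}B})^{-1}(\delta)$ and $\delta=s_{Id_{\PPi(B)}}$ is the diagonal section. Since $\lambda_{\PPi(B),\PPi(B)\times_{\Gamma}B}=(\lambda inv_{\PPi(B)\times_{\Gamma}B})^{-1}$, this reads $ap'_B=\lambda inv_{\PPi(B)\times_{\Gamma}B}(\delta)$. Now apply formula (\ref{2016.08.10.eq1}) for the object $\PPi(B)\times_{\Gamma}B\in Ob_{\ge 2}$ (whose $\PPi$ is $\PPi(B)\times_{\Gamma}\PPi(B)$ by Definition \ref{2015.03.17.def1}(1b), and whose $ft$ is $\PPi(B)\times_{\Gamma}A$ — here one must carefully unwind which object plays the roles of $B$, $A$, $\Gamma$): this expresses $ap'_B = q(\delta,\,-)\circ Ap_{\PPi(B)\times_{\Gamma}B}$, and $Ap_{\PPi(B)\times_{\Gamma}B}=p(\PPi(B),\Gamma)^*(Ap_B)$ by the base-change property (b) of $AllAp_2^{\PPi}$ applied to $f=p_{\PPi(B)}$. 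Composing with $q(p_{\PPi(B)},B)$ and using the definition (\ref{2016.08.10.eq1}) once more — now in reverse — should collapse the whole expression back to $Ap_B$; the clean way to organize this is to observe that $\lambda inv_B(ap'_B\text{-section associated data})$ recovers $Ap_B$ via the defining relation (\ref{2016.08.12.eq1}) together with $Ap_B\circ p_B = q(p_{\PPi(B)},A)$.

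Alternatively, and perhaps more economically, I would avoid the explicit pullback bookkeeping by arguing as follows. The function $\lambda inv_B$ built from $Ap'_B$ equals (by (\ref{2016.08.10.eq3}) in Construction \ref{2015.03.13.constr1}) a left inverse of $\lambda_{\Gamma,B}=C2(Ap_*)_{\Gamma,B}=(\lambda inv_B^{\,Ap})^{-1}$; hence it equals $\lambda inv_B^{\,Ap}$, the function attached to the original $Ap_B$, since a bijection has a unique one-sided inverse. So the functions $\lambda inv_B$ coming from $Ap'_B$ and from $Ap_B$ coincide. It remains to deduce $Ap'_B=Ap_B$ from the equality of these two functions together with the fact that both $Ap'_B$ and $Ap_B$ satisfy (\ref{2016.08.12.eq1}); this is a rigidity statement saying that a morphism $\PPi(B)\times_{\Gamma}A\sr B$ over $\Gamma$ satisfying (\ref{2016.08.12.eq1}) is determined by the induced map on sections $\partial^{-1}(\PPi(B))\sr\partial^{-1}(B)$, $s\mapsto q(s,\PPi(B)\times_{\Gamma}A)\circ Ap_B$. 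I would prove this rigidity by evaluating at the canonical section, much as $\delta=s_{Id_{\PPi(B)}}$ was used in Construction \ref{2015.03.13.constr1}: pulling $Ap_B$ back along $p_{\PPi(B)}$ and feeding in the diagonal recovers $Ap_B$ itself up to the composition with a $q$-morphism, because $q(-,-)$ and $^*$ are related by (\ref{2015.06.11.eq7}) and (\ref{2015.06.11.eq6}).

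The main obstacle I anticipate is purely bookkeeping: correctly tracking, in Construction \ref{2015.03.13.constr1} applied to the auxiliary object $\PPi(B)\times_{\Gamma}B$, which concrete objects and morphisms instantiate the schematic $B,A,\Gamma,\PPi(B)$, and then matching the resulting chain of $q(-,-)$, $p(-,-)$ and $f^*$ operations against the original $Ap_B$ using Lemmas \ref{2015.06.11.l2}, \ref{2016.08.06.l1}, \ref{2015.06.15.l2} and equations (\ref{2015.06.11.eq7}), (\ref{2015.06.11.eq6}), (\ref{2016.08.12.eq1}). No genuinely new idea beyond Construction \ref{2015.03.13.constr1} and the uniqueness of one-sided inverses of bijections should be needed; the work is in the diagram chase, which is why I would prefer the second, more structural route that isolates the one rigidity lemma and otherwise runs on formal properties already established.
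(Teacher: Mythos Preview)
Your proposal is correct and your first route is exactly the paper's approach: unwind $C1(C2(Ap_*))_B$ to $\lambda inv_{\PPi(B)\times_{\Gamma}B}(\delta)\circ q(p_{\PPi(B)},B)$, expand via (\ref{2016.08.10.eq1}), use the base-change property of $Ap_*$ to rewrite $Ap_{\PPi(B)\times_{\Gamma}B}$ in terms of $Ap_B$, and then collapse the resulting chain of $q$-morphisms using Lemma \ref{2015.06.11.l2} and the identity $\delta\circ q(p_{\PPi(B)},\PPi(B))=Id_{\PPi(B)}$.

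Your second ``more structural'' route is not genuinely different. The step you isolate as a \emph{rigidity lemma} --- that a morphism $\PPi(B)\times_{\Gamma}A\sr B$ satisfying (\ref{2016.08.12.eq1}) is determined by its induced $\lambda inv$ family --- is exactly the content of the computation (\ref{2016.04.26.eq1})--(\ref{2016.05.04.eq2}) in the paper, and you correctly note that its proof goes by evaluating at the diagonal after pulling back along $p_{\PPi(B)}$. So the second route does not avoid the diagram chase; it only relocates it. The preliminary observation that the two $\lambda inv_B$ functions agree (via uniqueness of one-sided inverses of bijections) is pleasant but buys nothing, since you still need the rigidity step and that step \emph{is} the paper's computation. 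There is no shortcut here: unlike Lemma \ref{2015.03.15.l1}, where the map $C2$ is defined by inverting so that the round-trip is tautological, the map $C1$ is defined by a genuine construction and verifying the other round-trip requires showing that construction is inverse to $\lambda inv$, which is the work you identified.
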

\begin{proof}
Let $Ap_*\in AllAp_2^\PPi$. Then $\lambda_{*,*}=C2(Ap_*)_{*,*}$ is the double family of functions of the form
$$\lambda_{\Gamma,B}:\partial^{-1}(B)\sr \partial^{-1}(\PPi(B))$$
parametrized by $\Gamma\in Ob$ and $B\in \Ob_2(\Gamma)$ defined by the formula
$$\lambda_{\Gamma,B}=(\lambda inv_B)^{-1}$$
where on the right hand side $B$ is considered as an element of $Ob_{\ge 2}$, and for $s\in \partial^{-1}(\PPi(B))$ one has
$$\lambda inv_B(s)=q(s,\PPi(B)\times_{\Gamma}A)\circ Ap_B$$
As before, let $\delta=s_{Id_{\PPi(B)}}$Next we have
$$C1(\lambda_{*,*})_B=ap_B\circ q(p_{\PPi(B)},B)=(\lambda_{\PPi(B),\PPi(B)\times_{\Gamma}B})^{-1}(\delta)\circ q(p_{\PPi(B)},B)=$$
$$\lambda inv_{\PPi(B)\times_{\Gamma}B}(\delta)\circ q(p_{\PPi(B)},B)=$$
$$q(\delta, (\PPi(\PPi(B)\times_{\Gamma}B))\times_{\Gamma}(ft(\PPi(B)\times_{\Gamma}B)))\circ Ap_{\PPi(B)\times_{\Gamma}B}\circ q(p_{\PPi(B)},B)$$
Using that $\PPi(\PPi(B)\times_{\Gamma}B)=\PPi(B)\times_{\Gamma}\PPi(B)$ and $ft(\PPi(B)\times_{\Gamma}B)=\PPi(B)\times_{\Gamma}A$ we obtain, as our goal, the equality:
\begin{eq}\llabel{2016.04.26.eq1}
q(\delta,(\PPi(B)\times_{\Gamma}\PPi(B))\times_{\Gamma}( \PPi(B)\times_{\Gamma}A)) \circ Ap_{\PPi(B)\times_{\Gamma}B}\circ q(p_{\PPi(B)},B)=Ap_B
\end{eq}
For any $f:\Gamma'\sr \Gamma$ we have:
$$Ap_{f*(B)}\circ q(f,B)=f^*(Ap_B)\circ q(f,B)=q(f,\PPi(B)\times_{\Gamma}A)\circ Ap_B$$
where the first equality is by stability of $Ap$ under pullbacks (condition (b) in the definition of the set $AllAp_2^\PPi$) and the second by (\ref{2015.06.11.eq7}). Applying it to (\ref{2016.04.26.eq1}) and $f=p_{\PPi(B)}$ we get:
\begin{eq}\llabel{2016.04.26.eq2}q(\delta,(\PPi(B)\times_{\Gamma}\PPi(B))\times_{\Gamma}(\PPi(B)\times_{\Gamma}A)) \circ q(p_{\PPi(B)},\PPi(B)\times_{\Gamma}A)\circ Ap_B=Ap_B
\end{eq}
Next we have
$$(\PPi(B)\times_{\Gamma}\PPi(B))\times_{\Gamma}(\PPi(B)\times_{\Gamma}A)=(p_{\PPi(B)\times_{\Gamma}\PPi(B)}\circ p_{\PPi(B)})^*(A)=$$$$(q(p_{\PPi(B)},\PPi(B))\circ p_{\PPi(B)})^*(A)=q(p_{\PPi(B)},\PPi(B))^*(\PPi(B)\times_{\Gamma}A)$$
where the first equality is by Lemma \ref{2015.06.11.l2}(2), the second by the commutativity of the canonical squares in C-systems or by the commutativity clause of Lemma \ref{2015.06.15.l1} and the third is again by Lemma \ref{2015.06.11.l2}(2). 

That each of these equalities is applicable can be seen from the lower square of the following commutative diagram:
$$
\begin{CD}
\PPi(B)\times_{\Gamma}A @>>> \PPi(B)\times_{\Gamma}(\PPi(B)\times_{\Gamma}A) @>q(p_{\PPi(B)}, \PPi(B)\times_{\Gamma}A)>> \PPi(B)\times_{\Gamma}A\\
@VVV @VVV @VVV\\
\PPi(B) @>\delta>> \PPi(B)\times_{\Gamma}\PPi(B) @>q(p_{\PPi(B)},\PPi(B))>> \PPi(B)\\
@. @Vp_{\PPi(B)\times_{\Gamma}\PPi(B)}VV @VVp_{\PPi(B)}V\\ 
{} @. \PPi(B) @>p_{\PPi(B)}>> \Gamma
\end{CD}
$$
We can now rewrite (\ref{2016.04.26.eq2}) as 
\begin{eq}
\llabel{2016.05.04.eq1}
q(\delta,q(p_{\PPi(B)},\PPi(B))^*(\PPi(B)\times_{\Gamma}A))\circ q(p_{\PPi(B)},\PPi(B)\times_{\Gamma}A)\circ Ap_B=Ap_B
\end{eq}

Computing the composition of the first two morphisms we get
\begin{eq}
\llabel{2016.05.04.eq2}
\begin{CD}
q(\delta,q(p_{\PPi(B)},\PPi(B))^*(\PPi(B)\times_{\Gamma}A))\circ q(p_{\PPi(B)},\PPi(B)\times_{\Gamma}A)=\\
q(\delta,q(p_{\PPi(B)},\PPi(B))^*(\PPi(B)\times_{\Gamma}A))\circ q(q(p_{\PPi(B)},\PPi(B)),\PPi(B)\times_{\Gamma}A)=\\
q(\delta\circ q(p_{\PPi(B)},\PPi(B)),\PPi(B)\times_{\Gamma}A)=q(Id_{\PPi(B)},\PPi(B)\times_{\Gamma}A)=\\
=Id_{\PPi(B)\times_{\Gamma}A}
\end{CD}
\end{eq}
where the first equality is by (\ref{2015.06.11.eq6}), the second equality by Lemma \ref{2015.06.11.l2}(2) for $f=\delta$ and $g=q(p_{\PPi(B)},\PPi(B))$ and the fourth equality by Lemma \ref{2015.06.11.l2}(1). The third equality follows from the formula $s_f\circ q(ft(f),codom(f))=f$ (see \cite[Definition 2.3(3)]{Csubsystems}) since $ft(Id_{\PPi(B)})=Id_{\PPi(B)}\circ p_{\PPi(B)}=p_{\PPi(B)}$.

Finally, (\ref{2016.05.04.eq2}) implies (\ref{2016.05.04.eq1}), which completes the proof of the lemma and with it our construction for Problem \ref{2015.03.13.prob1}.
\end{proof}

To construct a solution for the Problem \ref{2016.08.10.prob2fromold} it remains to construct a solution to the following problem.
\begin{problem}\llabel{2016.08.12.prob1}
Let $\PPi$ be a function $Ob_{\ge 2}\sr Ob$ satisfying the conditions (1a), (1b) of Definition \ref{2015.03.17.def1}. To construct a bijection between the set $AllAp_2^\PPi$ and the set $AllAp_1^\PPi$.
\end{problem}
\begin{construction}\rm\llabel{2016.08.12.constr1}
Recall that 
\begin{enumerate}
\item $AllAp_1^\PPi$ is the set of families of morphisms of the form
$$Ap_B:A\times_{\Gamma}\PPi(B)\sr B$$
parametrized by $B\in Ob_{\ge 2}$ such that the following conditions hold:
\begin{enumerate}
\item for any $B$, $Ap_B$ is a morphism over $A$,
\item for any $f:\Gamma'\sr \Gamma$ one has $f^*(Ap_B)=Ap_{f^*(B)}$, where $f^*(Ap_B)$ is defined in view of Lemma \ref{2016.08.14.l1}(1),
\item let $\lambda inv_{B}:\partial^{-1}(\PPi(B))\sr \partial^{-1}(B)$ be the function defined
by the formula
$$\lambda inv_B(s)=p_{A}^*(s)\circ Ap_B$$
in view of Lemma \ref{2016.08.14.l1}(2), then for any $B$, $\lambda inv_B$
is a bijection.
\end{enumerate} 
\item $AllAp_2^\PPi$ is the set of families of morphisms of the form
$$Ap_B:\PPi(B)\times_{\Gamma} A\sr B$$
parametrized by $B\in Ob_{\ge 2}$ such that the following conditions hold:
\begin{enumerate}
\item for any $B$ one has $Ap_B\circ p_B=q(p_{\PPi(B)},A)$, i.e., (\ref{2016.08.12.eq1}) holds,
\item for any morphism $f:\Gamma'\sr \Gamma$ one has $f^*(Ap_B)=Ap_{f^*(B)}$, where $f^*(Ap_B)$ is defined in view of Lemma \ref{2016.08.14.l2}(1),
\item let $\lambda inv_{B}:\partial^{-1}(\PPi(B))\sr \partial^{-1}(B)$ be the function
defined by the formula
$$\lambda inv_{B}(s)=q(s,\PPi(B)\times_{\Gamma}A)\circ Ap_B$$
in view of Lemma \ref{2016.08.14.l2}(2), then for any $B$, $\lambda inv_B$ is a bijection.
\end{enumerate}
\end{enumerate}
Let $X$ be the set of (all) families of morphisms of the form 
$$A\times_{\Gamma}\PPi(B)\sr B$$
parametrized by $B\in Ob_{\ge 2}$ and $Y$ the set of (all) families of morphisms of the form 
$$\PPi(B)\times_{\Gamma} A\sr B$$
also parametrized by $B\in Ob_{\ge 2}$. 

Let $\Phi:X\sr Y$ be the function that maps $Ap^2_*\in X$ to the family $\Phi(Ap^2_*)_*$ where for $B\in Ob_{\ge 2}$ one has
$$\Phi(Ap^2_*)_B=exch(A,\PPi(B);\Gamma)\circ Ap^2_B$$
and $\Psi:Y\sr X$ the function that maps $Ap^1_*\in Y$ to the family $\Psi(Ap^1_*)_*$ where for $B\in Ob_{\ge 2}$ one has
$$\Psi(Ap^1_*)_B=exch(\PPi(B),A;\Gamma)\circ Ap^1_B$$
Because of (\ref{2016.05.20.eq3}) we have $\Phi\circ \Psi=Id_X$ and $\Psi\circ \Phi=Id_Y$, that is, $\Phi$ and $\Psi$ are mutually inverse bijections between $X$ and $Y$.

We have $AllAp^\PPi_2\subset X$ and $AllAp^\PPi_1\subset Y$. It remains to show that
\begin{eq}\llabel{2016.08.16.eq1}
\Phi(AllAp^\PPi_2)\subset AllAp^\PPi_1
\end{eq}
\begin{eq}\llabel{2016.08.16.eq2}
\Psi(AllAp^\PPi_1)\subset AllAp^\PPi_2
\end{eq}
Then Lemma \ref{2016.06.09.l1} implies that the functions
$$\Phi_0:AllAp^\PPi_2\sr AllAp^\PPi_1$$
$$\Psi_0:AllAp^\PPi_1\sr AllAp^\PPi_2$$
defined by $\Phi$ and $\Psi$ are mutually inverse bijections.

Let us prove (\ref{2016.08.16.eq1}). Let $Ap^2_*\in AllAp^\PPi_2$. Let us denote the family $\Phi(Ap^2_*)_*$ by $Ap^1_*$. For $B\in Ob_{\ge 2}$ we have
$$Ap^1_B\circ p_B=exch(A,\PPi(B);\Gamma)\circ Ap^2_B\circ p_B=$$$$exch(A,\PPi(B);\Gamma)\circ q(p_{\PPi(B)},A)=p(A\times_{\Gamma}\PPi(B)),A)$$
where the third equality is by (\ref{2016.06.18.eq6}). We conclude that $Ap^1_B$ is a morphism over $A$.

Let $f:\Gamma'\sr \Gamma$ be a morphism. Then one has
$$f^*(Ap^1_B)=
f^*(exch(A,\PPi(B);\Gamma)\circ Ap^2_B)=$$$$
f^*(exch(A,\PPi(B);\Gamma))\circ f^*(Ap^2_B)=
exch(f^*(A),f^*(\PPi(B));\Gamma')\circ Ap^2_{f^*(B)}=$$$$ exch(f^*(A),f^*(\PPi(B));ft^2(f^*(B)))\circ Ap^2_{f^*(B)}=$$$$
exch(ft(f^*(B)),\PPi(f^*(B)));ft^2(f^*(B)))\circ Ap^2_{f^*(B)}=
Ap^1_{f^*(B)}$$
where the second equality is by Lemma \ref{2016.08.14.l3}(3), the fact that $Ap^2_B$ is a morphism over $\Gamma$ and Lemma \ref{2016.08.18.l1}(2), the third equality is by Lemma \ref{2016.08.18.l1}(3) and the assumption that $f^*(Ap^2_B)=Ap^2_{f_*(B)}$, the fourth equality is by Lemma \ref{2015.06.15.l1}, including (\ref{2016.04.30.eq2}), and the fifth equality is by (\ref{2016.05.06.eq1}) and the assumption that $f^*(\PPi(B))=\PPi(f^*(B))$. We conclude that the second property of $Ap^1_B$ holds.

Let $\lambda inv^1_B$ be the function $\partial^{-1}(\PPi(B))\sr\partial^{-1}(B)$ defined, because of the two properties of $Ap^1_B$ that we have proved, by the formula
$$\lambda inv^1_B(s)=p_{A}^*(s)\circ Ap^1_B$$
and $\lambda inv^2_B$ the function $\partial^{-1}(\PPi(B))\sr\partial^{-1}(B)$ defined by the formula
$$\lambda inv^2_B(s)=q(s,\PPi(B)\times_{\Gamma}A)\circ Ap^2_B$$
We know that this latter function is a bijection.

We need to show that $\lambda inv^1_B$ is a bijection. We have
$$\lambda inv^1_B(s)=p_{A}^*(s)\circ Ap^1_B=p_{A}^*(s)\circ exch(A,\PPi(B);\Gamma)\circ Ap^2_B=$$$$exch(A,\Gamma;\Gamma)\circ q(s,\PPi(B)\times_{\Gamma}A)\circ Ap^2_B=$$$$exch(A,\Gamma;\Gamma)\circ \lambda inv^2_B(s)=\lambda inv^2_B(s)$$
where the third equality is by (\ref{2016.05.20.eq1}) and the fifth equality is by (\ref{2016.05.18.eq5}). 

Therefore $\lambda inv^1_B=\lambda inv^2_B$ and also is a bijection. We conclude that the third property of $Ap^1_B$ holds.

This completes the proof of (\ref{2016.08.16.eq1}).

Let us prove (\ref{2016.08.16.eq2}). Let $Ap^1_*\in AllAp^\PPi_1$. Let us denote the family $\Psi(Ap^1_*)_*$ by $Ap^2_*$. For $B\in Ob_{\ge 2}$ we have
$$Ap^2_B\circ p_B=exch(\PPi(B),A;\Gamma)\circ Ap^1_B\circ p_B=$$$$exch(\PPi(B),A;\Gamma)\circ p(A\times_{\Gamma}\PPi(B)),A)=q(p_{\PPi(B)},A)$$
where the third equality is by (\ref{2016.06.18.eq6}). We conclude that the first property of $Ap^2_B$ holds.

Let $f:\Gamma'\sr \Gamma$ be a morphism. Then one has
$$f^*(Ap^2_B)=
f^*(exch(\PPi(B),A;\Gamma)\circ Ap^1_B)=$$$$
f^*(exch(\PPi(B),A;\Gamma))\circ f^*(Ap^1_B)=
exch(f^*(\PPi(B)),f^*(A);\Gamma')\circ Ap^1_{f^*(B)}=$$$$ exch(f^*(\PPi(B)),f^*(A);ft^2(f^*(B)))\circ Ap^1_{f^*(B)}=$$$$ exch(\PPi(f^*(B))),ft(f^*(B));ft^2(f^*(B)))\circ Ap^1_{f^*(B)}=
Ap^2_{f^*(B)}$$
where the second equality is by Lemma \ref{2016.08.14.l3}(3), the fact that $Ap^1_B$ is a morphism over $\Gamma$ and Lemma \ref{2016.08.18.l1}(2), the third equality is by Lemma \ref{2016.08.18.l1}(3) and the assumption that $f^*(Ap^1_B)=Ap^1_{f_*(B)}$, the fourth equality is by Lemma \ref{2015.06.15.l1}, including (\ref{2016.04.30.eq2}), and the fifth equality is by (\ref{2016.05.06.eq1}) and the assumption that $f^*(\PPi(B))=\PPi(f^*(B))$. We conclude that the second property of $Ap^2_B$ holds.

Let $\lambda inv^2_B$ be the function $\partial^{-1}(\PPi(B))\sr\partial^{-1}(B)$ defined, because of the two properties of $Ap^2_B$ that we have proved, by the formula
$$\lambda inv^2_B(s)=q(s,\PPi(B)\times_{\Gamma}A)\circ Ap^2_B$$
and $\lambda inv^1_B$ the function $\partial^{-1}(\PPi(B))\sr\partial^{-1}(B)$ defined by the formula
$$\lambda inv^1_B(s)=p_{A}^*(s)\circ Ap^1_B$$
We know that this latter function is a bijection.

We need to show that $\lambda inv^2_B$ is a bijection. We have
$$\lambda inv^2_B(s)=q(s,\PPi(B)\times_{\Gamma}A)\circ Ap^2_B=q(s,\PPi(B)\times_{\Gamma}A)\circ exch(\PPi(B),A;\Gamma)\circ Ap^1_B=$$$$exch(\Gamma,A;\Gamma)\circ p_{A}^*(s)\circ Ap^1_B=$$$$exch(A,\Gamma;\Gamma)\circ \lambda inv^1_B(s)=\lambda inv^1_B(s)$$
where the third equality is by (\ref{2016.05.20.eq2}) and the fifth equality is by (\ref{2016.05.18.eq5}). 

Therefore $\lambda inv^2_B=\lambda inv^1_B$ and also is a bijection. We conclude that the third property of $Ap^2_B$ holds.

This completes the proof of (\ref{2016.08.16.eq2}) and with it Construction \ref{2016.08.12.constr1}.
\end{construction}

This completes our construction for Problem \ref{2016.08.10.prob2fromold}.

\newpage

\subsection{Appendix A. Functions and families - the case of sets}
\label{App.2A}
We start with two preliminary lemmas.
\begin{lemma}
\llabel{2016.06.09.l1}
Let $\Phi:X\sr Y$ be a bijection of sets with the inverse bijection $\Psi$. Let $X_0$, $Y_0$ be subsets in $X$ and $Y$ respectively. Assume that
$$\Phi(X_0)\subset Y_0$$
$$\Psi(Y_0)\subset X_0$$
Then the functions 
$$\Phi_0:X_0\sr Y_0$$
$$\Psi_0:Y_0\sr X_0$$
defined by $\Phi$ and $\Psi$ are mutually inverse bijections.
\end{lemma}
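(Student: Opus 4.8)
The plan is to first check that the two restricted functions are well-defined, then verify directly that they compose to the identities on $X_0$ and $Y_0$.

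First I would observe that since $\Phi(X_0)\subset Y_0$, the assignment $x\mapsto \Phi(x)$ for $x\in X_0$ lands in $Y_0$, so it defines a function $\Phi_0:X_0\sr Y_0$; this is the function ``defined by $\Phi$'' referred to in the statement. Symmetrically, since $\Psi(Y_0)\subset X_0$, the assignment $y\mapsto\Psi(y)$ for $y\in Y_0$ defines a function $\Psi_0:Y_0\sr X_0$. By construction one has $\Phi_0(x)=\Phi(x)$ for all $x\in X_0$ and $\Psi_0(y)=\Psi(y)$ for all $y\in Y_0$.

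Next I would compute the two composites. For $x\in X_0$ we have $\Phi_0(x)\in Y_0$, hence $\Psi_0(\Phi_0(x))=\Psi(\Phi_0(x))=\Psi(\Phi(x))=x$, where the last equality uses that $\Psi$ is the inverse of $\Phi$. Thus $\Phi_0\circ\Psi_0=Id_{X_0}$ (in the diagrammatic order of the paper, $\Phi_0$ followed by $\Psi_0$ is $Id_{X_0}$). Similarly, for $y\in Y_0$ we have $\Psi_0(y)\in X_0$, hence $\Phi_0(\Psi_0(y))=\Phi(\Psi_0(y))=\Phi(\Psi(y))=y$, so $\Psi_0\circ\Phi_0=Id_{Y_0}$. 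Therefore $\Phi_0$ and $\Psi_0$ are mutually inverse, and in particular each is a bijection.

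There is essentially no obstacle here: the only point requiring any care is the well-definedness of the restrictions, which is exactly what the two displayed inclusion hypotheses provide; everything else is an immediate consequence of $\Psi$ being the two-sided inverse of $\Phi$. I would keep the write-up to these few lines.
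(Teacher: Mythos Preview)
Your proof is correct and follows essentially the same approach as the paper: verify the two composites $\Psi_0(\Phi_0(x))=\Psi(\Phi(x))=x$ and $\Phi_0(\Psi_0(y))=\Phi(\Psi(y))=y$ using that $\Psi$ is the inverse of $\Phi$. The paper's version is terser (it omits the well-definedness check, which is immediate from the hypotheses), but the argument is identical.
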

\begin{proof}
We have
$$\Phi_0(\Psi_0(y))=\Phi(\Psi(y))=y$$
and similarly for $\Psi_0(\Phi_0(x))$.
\end{proof}
\begin{lemma}
\llabel{2016.07.23.l4}
Let $\Phi:X\sr Y$ be a bijection of sets. Let $X_0$, $Y_0$ be subsets in $X$ and $Y$ respectively. Assume that
\begin{eq}\llabel{2016.07.23.eq5}
\Phi(X_0)\subset Y_0
\end{eq}
Then the following two conditions are equivalent:
\begin{enumerate}
\item the function 
$$\Phi_0:X_0\sr Y_0$$
defined by the inclusion (\ref{2016.07.23.eq5}), is a bijection.
\item for any $x\in X$ such that $\Phi(x)\in Y_0$ one has $x\in X_0$. 
\end{enumerate}
\end{lemma}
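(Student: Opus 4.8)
The plan is to prove the equivalence of the two conditions in Lemma \ref{2016.07.23.l4} by establishing both implications, using that $\Phi$ is a bijection between the ambient sets $X$ and $Y$. The only slightly subtle point is that $\Phi_0$ is a restriction/corestriction of $\Phi$, so injectivity of $\Phi_0$ is free but surjectivity is exactly what needs to be controlled.

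First I would prove that (1) implies (2). Suppose $\Phi_0:X_0\sr Y_0$ is a bijection, and let $x\in X$ satisfy $\Phi(x)\in Y_0$. Since $\Phi_0$ is surjective, there is some $x'\in X_0$ with $\Phi_0(x')=\Phi(x)$, that is $\Phi(x')=\Phi(x)$. Since $\Phi$ is injective on $X$, we get $x=x'\in X_0$, which is exactly condition (2).

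Next I would prove that (2) implies (1). The function $\Phi_0$ is injective because it is the restriction of the injective function $\Phi$ to $X_0$. For surjectivity, let $y\in Y_0$. Since $\Phi:X\sr Y$ is a bijection there is a unique $x\in X$ with $\Phi(x)=y$; as $y\in Y_0$ we have $\Phi(x)\in Y_0$, so by condition (2) we conclude $x\in X_0$. Then $\Phi_0(x)=\Phi(x)=y$, so $\Phi_0$ is surjective, hence a bijection.

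I do not expect any real obstacle here: the statement is essentially unwinding definitions, and the proof uses nothing beyond the bijectivity of $\Phi$ and the inclusion (\ref{2016.07.23.eq5}). The one place to be careful in the formalization-ready write-up is to explicitly note that $\Phi_0$ being well-defined as a function $X_0\sr Y_0$ is precisely what (\ref{2016.07.23.eq5}) provides, and that injectivity of $\Phi_0$ is inherited from $\Phi$ without any further hypothesis. Everything is constructive: no choice or excluded middle is needed, since the preimage of $y$ under the bijection $\Phi$ is given, not merely asserted to exist.
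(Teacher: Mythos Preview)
Your proof is correct and follows essentially the same approach as the paper. The only minor difference is that for the direction (2) $\Rightarrow$ (1) the paper phrases the argument as showing $\Psi(Y_0)\subset X_0$ (where $\Psi=\Phi^{-1}$) and then invokes the preceding Lemma \ref{2016.06.09.l1}, whereas you argue injectivity and surjectivity of $\Phi_0$ directly; these are the same reasoning unpacked slightly differently.
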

\begin{proof}
To show that the first condition implies the second let $\Psi_0$ be the bijection inverse to $\Phi_0$ and let $x\in X$ be such that $\Phi(x)\in Y_0$. Since $\Psi_0(\Phi(x))\in X_0$ it is sufficient to prove that $\Psi_0(\Phi(x))=x$. We have
$$\Phi(\Psi_0(\Phi(x)))=\Phi_0(\Psi_0(\Phi(x)))=\Phi(x)$$
Since $\Phi$ is, in particular, injective we conclude that $\Psi_0(\Phi(x))=x$.

To prove that the second condition implies the first one, let $\Psi$ be an inverse to $\Phi$ and let $y\in Y_0$. Then $\Phi(\Psi(y))=y\in Y_0$ and therefore $\Psi(y)\in X_0$ by our assumption. We conclude that $\Psi(Y_0)\subset X_0$ and applying Lemma \ref{2016.06.09.l1} conclude that $\Phi_0$ is a bijection.
\end{proof}

Consider a diagram $D$ of sets and functions of the form
\begin{eq}\llabel{2016.06.09.eq2}
\begin{CD}
\wt{X} @. \wt{Y}\\
@VaVV @VVbV\\
X @>f>> Y
\end{CD}
\end{eq}
Let $Fam_D$ be the set of families of functions of the form
\begin{eq}\llabel{2016.06.09.eq3}
\wt{f}_A:a^{-1}(A)\sr b^{-1}(f(A))
\end{eq}
parametrized by $A\in X$. 

We will write such a family as $(\wt{f}_A)_{A\in X}$ with $A$ in this expression being a bound variable or, when an abbreviated notation is called for, $\wt{f}_*$. The same convention will be applied to other families. 

Let $Fun_D$ be the set of functions $\wt{f}:\wt{X}\sr \wt{Y}$ such that the square 
\begin{eq}\llabel{2016.07.07.eq4}
\begin{CD}
\wt{X} @>\wt{f}>> \wt{Y}\\
@VaVV @VVbV\\
X @>f>> Y
\end{CD}
\end{eq}
commutes.

Let $\Phi_D:Fam_D\sr Fun_D$ be the function given by the formula
\begin{eq}\llabel{2016.07.23.eq1old}
\Phi_D((\wt{f}_A)_{A\in X})(x)=\wt{f}_{a(x)}(x)
\end{eq}
and $\Psi_D:Fun_D\sr Fam_D$ the function given by the formula 
\begin{eq}\llabel{2016.07.23.eq2old}
\Psi_D(\wt{f})_A(x)=\wt{f}(x)
\end{eq}
\begin{lemma}
\llabel{2016.05.22.l1new}
The functions $\Phi_D$ and $\Psi_D$ are mutually inverse bijections.
\end{lemma}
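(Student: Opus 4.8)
The plan is to verify directly that $\Psi_D \circ \Phi_D = Id_{Fam_D}$ and $\Phi_D \circ \Psi_D = Id_{Fun_D}$ by unwinding the defining formulas (\ref{2016.07.23.eq1old}) and (\ref{2016.07.23.eq2old}); no clever idea is needed, only bookkeeping and a check that the intermediate expressions are well-typed. Before the two composition computations, I would record the one genuinely non-formal point: the formulas define functions into the asserted codomains. For $\Phi_D$, given $(\wt{f}_A)_{A\in X}$ and $x\in \wt{X}$, the element $\wt{f}_{a(x)}(x)$ makes sense because $x\in a^{-1}(a(x))$, and it lies in $b^{-1}(f(a(x)))$; hence $b(\Phi_D((\wt f_A))( x)) = f(a(x))$, so the square (\ref{2016.07.07.eq4}) commutes and $\Phi_D((\wt f_A))\in Fun_D$. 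For $\Psi_D$, given $\wt f\in Fun_D$ and $A\in X$, for $x\in a^{-1}(A)$ we have $b(\wt f(x)) = f(a(x)) = f(A)$ by commutativity of (\ref{2016.07.07.eq4}), so $\wt f(x)\in b^{-1}(f(A))$ and $\Psi_D(\wt f)_A$ is indeed a function $a^{-1}(A)\sr b^{-1}(f(A))$; thus $\Psi_D(\wt f)\in Fam_D$.

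Next I would compute the first composite. For $(\wt f_A)_{A\in X}\in Fam_D$, $B\in X$, and $x\in a^{-1}(B)$ we have
$$\Psi_D(\Phi_D((\wt f_A)_{A\in X}))_B(x) = \Phi_D((\wt f_A)_{A\in X})(x) = \wt f_{a(x)}(x) = \wt f_B(x),$$
using $a(x) = B$ since $x\in a^{-1}(B)$. As this holds for all $B$ and all $x\in a^{-1}(B)$, the families $\Psi_D(\Phi_D((\wt f_A)_{A\in X}))$ and $(\wt f_A)_{A\in X}$ agree, so $\Psi_D\circ\Phi_D = Id_{Fam_D}$.

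For the other composite, let $\wt f\in Fun_D$ and $x\in \wt X$. Then
$$\Phi_D(\Psi_D(\wt f))(x) = \Psi_D(\wt f)_{a(x)}(x) = \wt f(x),$$
where the first equality is the definition of $\Phi_D$ and the second is the definition of $\Psi_D$ (applied with $A = a(x)$, which is legitimate since $x\in a^{-1}(a(x))$). Hence $\Phi_D\circ\Psi_D = Id_{Fun_D}$, and the two functions are mutually inverse bijections. I do not anticipate a real obstacle here; the only thing that requires care — and the only place a formalization-ready writeup must be explicit — is checking at each step that the arguments fed to $\wt f_A$ or to $\wt f$ lie in the correct fibers, which is exactly the well-definedness bookkeeping done in the first paragraph.
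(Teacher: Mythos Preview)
Your proof is correct and follows essentially the same approach as the paper's: both verify the two composites are identities by direct unwinding of the defining formulas, using $a(x)=A$ for $x\in a^{-1}(A)$. You additionally spell out the well-definedness of $\Phi_D$ and $\Psi_D$ (that they land in $Fun_D$ and $Fam_D$ respectively), which the paper leaves implicit in the definitions preceding the lemma; note also that the paper uses diagrammatic composition order, so its ``$\Phi_D\circ\Psi_D=Id_{Fam_D}$'' matches your $\Psi_D\circ\Phi_D=Id_{Fam_D}$.
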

\begin{proof}
Let $\wt{f}_*\in Fam_D$, $A\in X$ and $x\in a^{-1}(A)$, then one has
$$\Psi_D(\Phi_D(\wt{f}_*))_A(x)=\Phi_D(\wt{f}_*)(x)=\wt{f}_{a(x)}(x)$$
For $x\in a^{-1}(A)$ we have $a(x)=A$ and therefore 
$$\Phi_D\circ \Psi_D=Id_{Fam_D}$$
Next, let $\wt{f}\in Fun_D$ and $x\in \wt{X}$. Then one has
$$\Phi_D(\Psi_D(\wt{f}))(x)=(\Psi_D(\wt{f}))_{a(x)}(x)=\wt{f}(x)$$
and therefore
$$\Psi_D\circ \Phi_D=Id_{Fun_D}$$
This completes the proof of the lemma.
\end{proof}
Let $Fam_{D,0}$ be the subset of $Fam_D$ that consists of families of bijections.

Let $Fun_{D,0}$ be the subset of $Fun_D$ that consists of functions $\wt{f}$ such that the square (\ref{2016.07.07.eq4}) is a pullback.
\begin{lemma}
\llabel{2016.06.09.l2}
One has
\begin{eq}
\llabel{2016.06.09.eq5}
\begin{CD}
\Phi_D(Fam_{D,0})\subset Fun_{D,0}\spc\spc(a)\\
\Psi_D(Fun_{D,0})\subset Fam_{D,0}\spc\spc(b)
\end{CD}
\end{eq}
and the corresponding functions
$$\Phi_{D,0}:Fam_{D,0}\sr Fun_{D,0}$$
$$\Psi_{D,0}:Fun_{D,0}\sr Fam_{D,0}$$
are mutually inverse bijections.
\end{lemma}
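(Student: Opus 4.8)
The plan is to use Lemma \ref{2016.05.22.l1new} together with Lemma \ref{2016.06.09.l1} applied to the subsets $Fam_{D,0}\subset Fam_D$ and $Fun_{D,0}\subset Fun_D$. By Lemma \ref{2016.05.22.l1new} the functions $\Phi_D$ and $\Psi_D$ are mutually inverse bijections between $Fam_D$ and $Fun_D$, so once the two inclusions (\ref{2016.06.09.eq5})(a) and (b) are established, Lemma \ref{2016.06.09.l1} immediately yields that the restricted functions $\Phi_{D,0}$ and $\Psi_{D,0}$ are well-defined and mutually inverse bijections. Thus the whole proof reduces to verifying the two inclusions.

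For inclusion (a), I would start with a family $\wt f_*=(\wt f_A)_{A\in X}\in Fam_{D,0}$, i.e. each $\wt f_A:a^{-1}(A)\sr b^{-1}(f(A))$ is a bijection, and show that the square (\ref{2016.07.07.eq4}) with $\wt f=\Phi_D(\wt f_*)$ is a pullback. The cleanest route is to check the universal property directly: given a set $T$ with maps $u:T\sr \wt Y$ and $v:T\sr X$ such that $u\circ b=v\circ f$, I need a unique $w:T\sr \wt X$ with $w\circ\wt f=u$ and $w\circ a=v$. For each $t\in T$, the element $u(t)\in\wt Y$ lies over $f(v(t))\in Y$ (by the commuting condition), hence $u(t)\in b^{-1}(f(v(t)))$; since $\wt f_{v(t)}$ is a bijection onto this fiber, there is a unique $w(t)\in a^{-1}(v(t))$ with $\wt f_{v(t)}(w(t))=u(t)$. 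One checks $a(w(t))=v(t)$ by construction and $\wt f(w(t))=\Phi_D(\wt f_*)(w(t))=\wt f_{a(w(t))}(w(t))=\wt f_{v(t)}(w(t))=u(t)$, and uniqueness of $w$ follows fiberwise from injectivity of each $\wt f_A$. Hence the square is a pullback and $\Phi_D(\wt f_*)\in Fun_{D,0}$.

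For inclusion (b), I would take $\wt f\in Fun_{D,0}$ and show each component $\Psi_D(\wt f)_A:a^{-1}(A)\sr b^{-1}(f(A))$, which sends $x\mapsto\wt f(x)$, is a bijection. The standard fact is that in a pullback square of sets the induced map on fibers over a point of the base is a bijection; concretely, given $y\in b^{-1}(f(A))$, the pair $(y,A)\in\wt Y\times X$ satisfies $b(y)=f(A)$, so by the pullback property there is a unique $x\in\wt X$ with $\wt f(x)=y$ and $a(x)=A$, i.e. a unique $x\in a^{-1}(A)$ with $\Psi_D(\wt f)_A(x)=y$. This gives surjectivity and injectivity simultaneously, so $\Psi_D(\wt f)_A$ is a bijection for every $A$, hence $\Psi_D(\wt f)\in Fam_{D,0}$.

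I do not anticipate a serious obstacle here; the only mild subtlety is being careful that the fiberwise reasoning is genuinely constructive — no choice is needed because in each case the element of $\wt X$ is produced uniquely, so one is defining a function rather than choosing values. Once (a) and (b) are in hand, the final sentence of the lemma is a direct citation of Lemma \ref{2016.06.09.l1} with $X\rightsquigarrow Fam_D$, $Y\rightsquigarrow Fun_D$, $X_0\rightsquigarrow Fam_{D,0}$, $Y_0\rightsquigarrow Fun_{D,0}$, $\Phi\rightsquigarrow\Phi_D$, $\Psi\rightsquigarrow\Psi_D$.
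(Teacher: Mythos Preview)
Your proof is correct and follows the same overall architecture as the paper: reduce to the two inclusions (\ref{2016.06.09.eq5})(a),(b) and then invoke Lemmas \ref{2016.05.22.l1new} and \ref{2016.06.09.l1}. The difference lies only in how the inclusions are established. You verify the universal property of the pullback directly, producing for each cone $(T,u,v)$ the unique mediating map $w$ fiber by fiber using the bijections $\wt f_A$; conversely you read off fiberwise bijectivity straight from the universal property. The paper instead introduces the standard fiber product $(X,f)\times_Y(\wt Y,b)$, forms the comparison map $g=a\times_Y\wt f$, and uses the factorization $\wt f_A=g_A\circ pr_{2,A}$ (with $pr_{2,A}$ always a bijection) to reduce both directions to the single statement that $g$ is a bijection iff all $g_A$ are. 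Your argument is a bit more elementary and self-contained; the paper's is slightly more structural, packaging both directions into one comparison-map argument. Either route is fine and neither hides any subtlety.
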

\begin{proof}
Let us show that $\Phi_D(Fam_{D,0})\subset Fun_{D,0}$. 

Let $(X,f)\times_Y (\wt{Y},b)$ be the standard fiber product of $f$ and $b$, that is, the subset in $X\times \wt{Y}$ that consists of pairs $(A,y)$ such that $f(A)=b(y)$. Let $pr_1:(X,f)\times_Y (\wt{Y},b)\sr X$ and $pr_2:(X,f)\times_Y (\wt{Y},b)\sr \wt{Y}$ be the corresponding projections. 

Let $\wt{f}\in Fun_D$ and  $\wt{f}_*=\Psi_D(\wt{f})$. Let $g=a\times_Y \wt{f}$ be the canonical function $\wt{X}\sr (X,f)\times_Y (\wt{Y},b)$. For $A\in X$ let $g_A:a^{-1}(A)\sr pr_1^{-1}(A)$ be the function given by $g_A(x)=g(x)$ and let $pr_{2,A}:pr_1^{-1}(A)\sr b^{-1}(A)$ be the similar function defined by $pr_2$. 

One verifies easily that all functions $pr_{2,A}$ are bijections. 

Since $\wt{f}=g\circ pr_2$, for any $A\in X$ we have $\wt{f}_A=g_A\circ pr_{2,A}$. Therefore, $\wt{f}_A$ is a bijection if and only if $g_A$ is a bijection. In particular, if all functions $\wt{f}_A$ are bijections then all functions $g_A$ are bijections. This implies that $g$ is a bijection. Let $(D,\wt{f})$ be the square obtained from $D$ by adding $\wt{f}$. Then $g$ defines an isomorphism from $(D,\wt{f})$ to the canonical pullback based on $f$ and $b$ and therefore it is itself a pullback. This proves (\ref{2016.06.09.eq5}(a)). 

Let us assume now that that $(D,\wt{f})$ is a pullback. Then, by the uniqueness of pullbacks, we know that $g$ is a bijection. Then all functions $g_A$ are bijections and therefore all functions $\wt{f}_A$ are bijections. This proves (\ref{2016.06.09.eq5}(b)).

The fact that $\Phi_{D,0}$ and $\Psi_{D,0}$ are mutually inverse bijections follows now from (\ref{2016.06.09.eq5}), Lemma \ref{2016.06.09.l1} and Lemma \ref{2016.05.22.l1new}.
\end{proof}
\begin{lemma}\llabel{2016.08.02.l6}
Let $D$ be of the form (\ref{2016.06.09.eq2}). Then one has:
\begin{enumerate}
\item Let $\wt{f}_*$ be a family of functions of the form (\ref{2016.06.09.eq3}). Then the square of sets of the form (\ref{2016.07.07.eq4}) with $\wt{f}=\Phi_D(\wt{f}_*)$ is a pullback if and only if all functions $\wt{f}_A$ are bijections.
\item Let $\wt{f}$ be a function such that the square (\ref{2016.07.07.eq4}) is commutative. Then all functions $\Psi_D(\wt{f})_A$ are bijections if and only if the square (\ref{2016.07.07.eq4}) is a pullback.
\end{enumerate}
\end{lemma}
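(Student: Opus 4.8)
The statement to prove is Lemma \ref{2016.08.02.l6}, which packages together the two directions established in Lemma \ref{2016.06.09.l2} into a clean ``if and only if'' form, now phrased in terms of individual families $\wt{f}_*$ and individual functions $\wt{f}$ rather than in terms of the subsets $Fam_{D,0}$ and $Fun_{D,0}$.

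\bigskip

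The plan is to deduce both parts directly from Lemma \ref{2016.06.09.l2} together with Lemma \ref{2016.05.22.l1new}, using the fact that $\Phi_D$ and $\Psi_D$ are mutually inverse bijections between $Fam_D$ and $Fun_D$. For part (1), suppose $\wt{f}_*\in Fam_D$ and set $\wt{f}=\Phi_D(\wt{f}_*)$; since $\Phi_D$ and $\Psi_D$ are mutually inverse, $\Psi_D(\wt{f})=\wt{f}_*$. If the square (\ref{2016.07.07.eq4}) for $\wt{f}$ is a pullback, i.e. $\wt{f}\in Fun_{D,0}$, then by (\ref{2016.06.09.eq5}(b)) we get $\wt{f}_*=\Psi_D(\wt{f})\in Fam_{D,0}$, so all $\wt{f}_A$ are bijections. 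Conversely, if all $\wt{f}_A$ are bijections, i.e. $\wt{f}_*\in Fam_{D,0}$, then by (\ref{2016.06.09.eq5}(a)) we have $\wt{f}=\Phi_D(\wt{f}_*)\in Fun_{D,0}$, so the square is a pullback. Part (2) is the mirror image: given $\wt{f}\in Fun_D$, set $\wt{f}_*=\Psi_D(\wt{f})$, so that $\Phi_D(\wt{f}_*)=\wt{f}$; then ``all $\Psi_D(\wt{f})_A$ are bijections'' means $\wt{f}_*\in Fam_{D,0}$, and by the two inclusions of (\ref{2016.06.09.eq5}) this is equivalent to $\wt{f}\in Fun_{D,0}$, i.e. to the square being a pullback.

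\bigskip

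In slightly more detail, for part (1): the forward implication (square is a pullback $\Rightarrow$ all $\wt{f}_A$ bijections) uses (\ref{2016.06.09.eq5}(b)) applied to $\wt{f}\in Fun_{D,0}$, giving $\Psi_D(\wt{f})\in Fam_{D,0}$; since $\Psi_D(\Phi_D(\wt{f}_*))=\wt{f}_*$ by Lemma \ref{2016.05.22.l1new}, this says exactly that $\wt{f}_*$ is a family of bijections. The reverse implication uses (\ref{2016.06.09.eq5}(a)) directly. For part (2): ``all $\Psi_D(\wt{f})_A$ bijections'' is by definition the statement $\Psi_D(\wt{f})\in Fam_{D,0}$; by (\ref{2016.06.09.eq5}(a)) this implies $\Phi_D(\Psi_D(\wt{f}))=\wt{f}\in Fun_{D,0}$, which is the pullback condition; and by (\ref{2016.06.09.eq5}(b)) the pullback condition $\wt{f}\in Fun_{D,0}$ implies $\Psi_D(\wt{f})\in Fam_{D,0}$. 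Both directions of each biconditional are thus immediate.

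\bigskip

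There is no real obstacle here: this lemma is a bookkeeping corollary of the work already done in Lemma \ref{2016.06.09.l2}. The only thing requiring a moment of care is matching up the two formulations, namely remembering that $\Psi_D(\Phi_D(\wt{f}_*))=\wt{f}_*$ and $\Phi_D(\Psi_D(\wt{f}))=\wt{f}$ (Lemma \ref{2016.05.22.l1new}), so that one can freely pass between a family and the corresponding function without changing the content of the bijectivity or pullback conditions. I would write the proof as two short paragraphs, one per part, each citing Lemma \ref{2016.05.22.l1new} for the round-trip identity and the relevant half of (\ref{2016.06.09.eq5}) for each implication.
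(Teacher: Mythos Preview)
Your argument is correct. It differs from the paper's only in packaging: the paper writes ``Both assertions follow from Lemma \ref{2016.06.09.l2} and Lemma \ref{2016.07.23.l4}'', invoking the abstract bijection-and-subsets lemma to pass from the inclusions (\ref{2016.06.09.eq5}) to the elementwise ``if and only if'' statements, whereas you unpack this directly using the round-trip identities from Lemma \ref{2016.05.22.l1new} together with both inclusions of (\ref{2016.06.09.eq5}). The two routes are interchangeable --- indeed the proof of Lemma \ref{2016.07.23.l4} itself proceeds via the inverse bijection, so your argument is essentially an inlined version of the paper's citation.
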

\begin{proof}
Both assertions follow from Lemma \ref{2016.06.09.l2} and Lemma \ref{2016.07.23.l4}.
\end{proof}
%###We are deducing something that does not require function extensionality from something that does. Need to change. 
%###(Aug.04.2016)No need to change since the statement is about pullback squares in the category of sets and the proof would require function extensionality anyway.

\subsection{Appendix B. Functions and families - the case of presheaves of sets}
\label{App.2B}

Let $\cal C$ be a category. Consider a diagram $\cal D$ of presheaves of sets on $\cal C$ of the form
\begin{eq}\llabel{2016.06.09.eq6}
\begin{CD}
\wt{F} @. \wt{G}\\
@VaVV @VVbV\\
F @>P>> G
\end{CD}
\end{eq}
For $X\in {\cal C}$ let ${\cal D}(X)$ be the corresponding diagram of sets 
\begin{eq}\llabel{2016.06.09.eq7}
\begin{CD}
\wt{F}(X) @. \wt{G}(X)\\
@Va_XVV @VVb_XV\\
F(X) @>P_X>> G(X)
\end{CD}
\end{eq}
Let $Fam_{\cal D}$ be the set of double families of functions of the form 
\begin{equation}\llabel{2016.08.04.eq1}
\wt{P}_{X,A}:a_X^{-1}(A)\sr b_X^{-1}(P_X(A))
\end{equation}
parametrized by $X\in {\cal C}$ and $A\in F(X)$. 

Let $Fun_{\cal D}$ be the set of families of functions of the form 
\begin{equation}\llabel{2016.08.04.eq2}
\wt{P}_X:\wt{F}(X)\sr \wt{G}(X)
\end{equation}
such that the squares 
\begin{eq}\llabel{2016.07.07.eq5}
\begin{CD}
\wt{F}(X) @>\wt{P}_X>> \wt{G}(X)\\
@Va_XVV @VVb_XV\\
F(X) @>P_X>> G(X)
\end{CD}
\end{eq}
commute.

Applying our construction of $\Phi_D$ and $\Psi_D$ to the diagrams ${\cal D}(X)$ we get two functions
$$\Phi_{\cal D}:Fam_{\cal D}\sr Fun_{\cal D}$$
$$\Psi_{\cal D}:Fun_{\cal D}\sr Fam_{\cal D}$$
\begin{lemma}\llabel{2016.08.02.l3}
The functions $\Phi_{\cal D}$ and $\Psi_{\cal D}$ are mutually inverse bijections.
\end{lemma}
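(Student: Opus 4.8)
The plan is to reduce Lemma \ref{2016.08.02.l3} to the already-established Lemma \ref{2016.05.22.l1new} by working objectwise, since $\Phi_{\cal D}$ and $\Psi_{\cal D}$ are defined by applying the constructions $\Phi_{{\cal D}(X)}$ and $\Psi_{{\cal D}(X)}$ separately at each $X\in{\cal C}$. First I would unwind the definitions: an element of $Fam_{\cal D}$ is a double family $\wt{P}_{*,*}$, and fixing $X$ gives an element $\wt{P}_{X,*}\in Fam_{{\cal D}(X)}$ in the sense of Appendix A; likewise an element of $Fun_{\cal D}$ is a family $\wt{P}_*$ with each $\wt{P}_X$ making the square (\ref{2016.07.07.eq5}) commute, i.e.\ $\wt{P}_X\in Fun_{{\cal D}(X)}$. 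With this identification, by construction one has $\Phi_{\cal D}(\wt{P}_{*,*})_X=\Phi_{{\cal D}(X)}(\wt{P}_{X,*})$ and $\Psi_{\cal D}(\wt{P}_*)_{X,A}=\Psi_{{\cal D}(X)}(\wt{P}_X)_A$.

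Then the two composites are computed objectwise. For $\wt{P}_{*,*}\in Fam_{\cal D}$, $X\in{\cal C}$, $A\in F(X)$ and $x\in a_X^{-1}(A)$ we have
$$\Psi_{\cal D}(\Phi_{\cal D}(\wt{P}_{*,*}))_{X,A}(x)=\Psi_{{\cal D}(X)}(\Phi_{{\cal D}(X)}(\wt{P}_{X,*}))_A(x)=\wt{P}_{X,A}(x)$$
where the last equality is exactly the identity $\Psi_{{\cal D}(X)}\circ\Phi_{{\cal D}(X)}=Id$ from Lemma \ref{2016.05.22.l1new} applied to the diagram of sets ${\cal D}(X)$. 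Hence $\Psi_{\cal D}\circ\Phi_{\cal D}=Id_{Fam_{\cal D}}$. Symmetrically, for $\wt{P}_*\in Fun_{\cal D}$, $X\in{\cal C}$ and $x\in\wt{F}(X)$ we have
$$\Phi_{\cal D}(\Psi_{\cal D}(\wt{P}_*))_X(x)=\Phi_{{\cal D}(X)}(\Psi_{{\cal D}(X)}(\wt{P}_X))(x)=\wt{P}_X(x)$$
again by Lemma \ref{2016.05.22.l1new}, so $\Phi_{\cal D}\circ\Psi_{\cal D}=Id_{Fun_{\cal D}}$. This proves the lemma.

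The only point requiring care — and the closest thing to an obstacle — is the bookkeeping verifying that $\Phi_{\cal D}$ and $\Psi_{\cal D}$ genuinely do land in $Fun_{\cal D}$ and $Fam_{\cal D}$ respectively, i.e.\ that applying the set-level constructions objectwise produces honest families indexed as required; this is immediate from the fact that $\Phi_{{\cal D}(X)}$ maps into $Fun_{{\cal D}(X)}$ (so each square (\ref{2016.07.07.eq5}) commutes) and $\Psi_{{\cal D}(X)}$ maps into $Fam_{{\cal D}(X)}$ (so each $\wt{P}_{X,A}$ has the correct source and target $a_X^{-1}(A)\sr b_X^{-1}(P_X(A))$), which is precisely Lemma \ref{2016.05.22.l1new}. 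Everything else is a one-line objectwise computation, so there is no real difficulty beyond organizing the notation; in the formalization-ready style of the paper one would simply state the objectwise identities explicitly and invoke Lemma \ref{2016.05.22.l1new} at each $X$.
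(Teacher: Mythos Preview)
Your proposal is correct and takes essentially the same approach as the paper: the paper's proof is the one-liner ``The functions on sets of families defined by families of mutually inverse bijections are mutually inverse bijections,'' which is exactly the objectwise reduction to Lemma \ref{2016.05.22.l1new} that you spell out in detail. One minor notational point: the paper uses diagrammatic order for composition, so where you write $\Psi_{\cal D}\circ\Phi_{\cal D}=Id_{Fam_{\cal D}}$ the paper would write $\Phi_{\cal D}\circ\Psi_{\cal D}=Id_{Fam_{\cal D}}$; this does not affect the correctness of your argument.
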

\begin{proof}
The functions on sets of families defined by families of mutually inverse bijections are mutually inverse bijections.
\end{proof}
For any $f:Y\sr X$, the squares
\begin{eq}\llabel{2016.07.27.eq1}
\begin{CD}
\wt{F}(X) @>\wt{F}(f)>> \wt{F}(Y)\\
@Va_X VV @VVa_Y V\\
F(X) @>F(f)>> F(Y)
\end{CD}
\spc\spc
\begin{CD}
\wt{G}(X) @>\wt{G}(f)>> \wt{G}(Y)\\
@Vb_X VV @VVb_Y V\\
G(X) @>G(f)>> G(Y)
\end{CD}
\end{eq}
commute. Therefore, for every $A\in F(X)$ our construction $\Psi$ gives us a function
$$\wt{F}(f)_A:a_X^{-1}(A)\sr a_Y^{-1}(F(f)(A))$$
and a function
$$\wt{G}(f)_{P_X(A)}:b_X^{-1}(P_X(A))\sr b_{Y}^{-1}(G(f)(P_X(A)))$$
Let $Fam^N_{\cal D}$ be the subset in $Fam_{\cal D}$ of double families that are ``natural in $X$'', i.e., such that for all $f:Y\sr X$ and $A\in F(X)$ the squares
\begin{eq}\llabel{2016.07.05.eq1}
\begin{CD}
a_X^{-1}(A) @>\wt{P}_{X,A}>> b_X^{-1}(P_X(A))\\
@V\wt{F}(f)_A VV @VV \wt{G}(f)_{P_X(A)}V\\
a_Y^{-1}(F(f)(A)) @>\wt{P}_{Y,F(f)(A)}>> b_Y^{-1}(P_Y(F(f)(A)))=b_Y^{-1}(G(f)(P_X(A)))
\end{CD}
\end{eq}
where the equality reflects the fact that $P$ is a morphism of presheaves, commute.  

Let $Fun^N_{\cal D}$ be the subset in $Fun_{\cal D}$ that consists of families $\wt{P}_X$ that are natural transformations (morphisms of presheaves), i.e., such that for all $f:Y\sr X$ the squares 
\begin{eq}\llabel{2016.07.07.eq3}
\begin{CD}
\wt{F}(X) @>\wt{P}_X>> \wt{G}(X)\\
@V\wt{F}(f) VV @VV\wt{G}(f) V\\
\wt{F}(Y) @>\wt{P}_Y>> \wt{G}(Y)
\end{CD}
\end{eq}
commute. 

Note that $Fun^N_{\cal D}$ is exactly the set of morphisms of presheaves $\wt{P}$ such that the square 
\begin{eq}\llabel{2016.07.07.eq6}
\begin{CD}
\wt{F} @>\wt{P}>> \wt{G}\\
@VaVV @VVbV\\
F @>P>> G
\end{CD}
\end{eq}
commutes.
\begin{lemma}
\llabel{2016.07.07.l1}
One has
\begin{eq}\llabel{2016.07.07.eq1}
\Phi_{\cal D}(Fam^N_{\cal D})\subset Fun^N_{\cal D}
\end{eq}
\begin{eq}\llabel{2016.07.07.eq2}
\Psi_{\cal D}(Fun^N_{\cal D})\subset Fam^N_{\cal D}
\end{eq}
and the corresponding functions
$$\Phi^N_{\cal D}:Fam^N_{\cal D}\sr Fun^N_{\cal D}$$
$$\Psi^N_{\cal D}:Fun^N_{\cal D}\sr Fam^N_{\cal D}$$
are mutually inverse bijections.
\end{lemma}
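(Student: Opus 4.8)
The plan is to derive the lemma from Lemma \ref{2016.08.02.l3}, which already gives that $\Phi_{\cal D}$ and $\Psi_{\cal D}$ are mutually inverse bijections between $Fam_{\cal D}$ and $Fun_{\cal D}$, together with Lemma \ref{2016.06.09.l1}. Once the two inclusions (\ref{2016.07.07.eq1}) and (\ref{2016.07.07.eq2}) are established, Lemma \ref{2016.06.09.l1} (applied with the sets $Fam_{\cal D}$, $Fun_{\cal D}$ and their subsets $Fam^N_{\cal D}$, $Fun^N_{\cal D}$) yields at once that $\Phi^N_{\cal D}$ and $\Psi^N_{\cal D}$ are mutually inverse bijections. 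So the entire task reduces to verifying the two inclusions, and each is a matter of unwinding the formulas $\Phi_D((\wt{f}_A)_{A\in X})(x)=\wt{f}_{a(x)}(x)$ and $\Psi_D(\wt{f})_A(x)=\wt{f}(x)$ and comparing the two naturality conditions.

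For (\ref{2016.07.07.eq1}), take $\wt{P}_{*,*}\in Fam^N_{\cal D}$ and set $\wt{P}_*=\Phi_{\cal D}(\wt{P}_{*,*})$, so that $\wt{P}_X(x)=\wt{P}_{X,a_X(x)}(x)$ for $x\in\wt{F}(X)$. Fix $f:Y\sr X$; I must show that the square (\ref{2016.07.07.eq3}) commutes, i.e. $\wt{G}(f)(\wt{P}_X(x))=\wt{P}_Y(\wt{F}(f)(x))$ for all $x\in\wt{F}(X)$. Write $A=a_X(x)$. By the commutativity of the left square in (\ref{2016.07.27.eq1}) one has $a_Y(\wt{F}(f)(x))=F(f)(A)$, hence $\wt{P}_Y(\wt{F}(f)(x))=\wt{P}_{Y,F(f)(A)}(\wt{F}(f)(x))$. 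Since $\wt{F}(f)_A$ and $\wt{G}(f)_{P_X(A)}$ are, by the formula for $\Psi$, just the (co)restrictions of $\wt{F}(f)$ and $\wt{G}(f)$, the equality to be proved becomes $\wt{G}(f)_{P_X(A)}(\wt{P}_{X,A}(x))=\wt{P}_{Y,F(f)(A)}(\wt{F}(f)_A(x))$, which is exactly the content of the naturality square (\ref{2016.07.05.eq1}) for $\wt{P}_{*,*}$ with parameter $A$, evaluated at $x$. Thus $\wt{P}_*\in Fun^N_{\cal D}$.

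For (\ref{2016.07.07.eq2}), the computation runs in the opposite direction. Take $\wt{P}_*\in Fun^N_{\cal D}$ and set $\wt{P}_{*,*}=\Psi_{\cal D}(\wt{P}_*)$, so $\wt{P}_{X,A}(x)=\wt{P}_X(x)$ for $x\in a_X^{-1}(A)$. Given $f:Y\sr X$ and $A\in F(X)$, I evaluate (\ref{2016.07.05.eq1}) at $x\in a_X^{-1}(A)$: using again that $\wt{F}(f)_A$ and $\wt{G}(f)_{P_X(A)}$ are the restrictions of $\wt{F}(f)$ and $\wt{G}(f)$, the required equality reads $\wt{G}(f)(\wt{P}_X(x))=\wt{P}_Y(\wt{F}(f)(x))$, which is precisely the commutativity of the square (\ref{2016.07.07.eq3}) — and that holds since $\wt{P}_*$ is a morphism of presheaves. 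Hence $\wt{P}_{*,*}\in Fam^N_{\cal D}$. This establishes both inclusions, and with Lemma \ref{2016.06.09.l1} finishes the proof. I do not expect a genuine obstacle here: the argument is pure bookkeeping. The single point requiring care is identifying the auxiliary functions $\wt{F}(f)_A$ and $\wt{G}(f)_{P_X(A)}$ with co-restrictions of $\wt{F}(f)$ and $\wt{G}(f)$ and invoking the compatibility $a_Y(\wt{F}(f)(x))=F(f)(a_X(x))$ from (\ref{2016.07.27.eq1}); once that is done, the two naturality conditions (\ref{2016.07.05.eq1}) and (\ref{2016.07.07.eq3}) coincide pointwise and the inclusions are immediate.
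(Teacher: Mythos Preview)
Your proof is correct and follows essentially the same approach as the paper: reduce to the two inclusions via Lemma \ref{2016.06.09.l1}, then verify each inclusion by unwinding the definitions of $\Phi_{\cal D}$ and $\Psi_{\cal D}$ and observing that the naturality squares (\ref{2016.07.05.eq1}) and (\ref{2016.07.07.eq3}) coincide pointwise. The paper carries out the same verifications as explicit chains of equalities, while you phrase them slightly more conceptually, but there is no substantive difference.
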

\begin{proof}
The second statement follows from the first one by Lemma \ref{2016.06.09.l1}. 

Let us prove (\ref{2016.07.07.eq1}).  Let $\wt{P}_{*,*}\in Fam_{\cal D}^N$. Let $f:Y\sr X$ be a morphism. We know that the squares (\ref{2016.07.05.eq1}) commute for all $A\in F(X)$. We need to show that the square (\ref{2016.07.07.eq3}) commutes, i.e., that for any $B\in \wt{F}(X)$ we have 
$$\wt{G}(f)(\wt{P}_X(B))=\wt{P}_Y(\wt{F}(f)(B))$$
where $\wt{P}_*=\Phi_{\cal D}(\wt{P}_{*,*})$. 

We have
$$\wt{G}(f)(\wt{P}_X(B))=\wt{G}(f)(\wt{P}_{X,a_X(B)}(B))=\wt{G}(f)_{b_X(\wt{P}_{X,a_X(B)}(B))}(\wt{P}_{X,a_X(B)}(B))=$$$$\wt{G}(f)_{P_{X}(a_X(B))}(\wt{P}_{X,a_X(B)}(B))=
\wt{P}_{Y,F(f)(a_X(B))}(\wt{F}(f)_{a_X(B)}(B))=$$$$=\wt{P}_{Y,a_Y(\wt{F}(f)(B))}(\wt{F}(f)_{a_X(B)}(B))=\wt{P}_{Y,a_Y(\wt{F}(f)(B))}(\wt{F}(f)(B))=\wt{P}_Y(\wt{F}(f)(B))$$
where the fourth equality is by commutativity of (\ref{2016.07.05.eq1}) and the rest of the equalities are by definitions. 

Let us prove (\ref{2016.07.07.eq2}). Let $\wt{P}_*\in Fun^N_{\cal D}$. Let $f:Y\sr X$ be a morphism. Let $A\in F(X)$. We know that the square (\ref{2016.07.07.eq3}) commutes. We need to show that the square (\ref{2016.07.05.eq1}) commutes, i.e., that for any $B\in a_X^{-1}(A)$ we have
$$\wt{G}(f)_{P_X(A)}(\wt{P}_{X,A}(B))=\wt{P}_{Y,F(f)(A)}(\wt{F}(f)_A(B))$$
We have
$$\wt{G}(f)_{P_X(A)}(\wt{P}_{X,A}(B))=\wt{G}(f)_{P_X(A)}(\wt{P}_{X}(B))=\wt{G}(f)(\wt{P}_X(B))=\wt{P}_Y(\wt{F}(f)(B))=$$
$$\wt{P}_Y(\wt{F}(f)_A(B))=\wt{P}_{Y,F(f)(A)}(\wt{F}(f)_A(B))$$
where the third equality holds by the commutativity of (\ref{2016.07.07.eq3}) and the rest of the equalities by definitions.

The lemma is proved. 
\end{proof}
\begin{lemma}\llabel{2016.08.04.l1}
Let $\cal D$ be a diagram of morphisms of presheaves of the form (\ref{2016.06.09.eq6}). Then one has:
\begin{enumerate}
\item Let $\wt{P}_{*,*}$ be a double family of functions of the form (\ref{2016.08.04.eq1}). Then the family $\Phi_{\cal D}(\wt{P}_{*,*})_*$ is a morphism of presheaves if and only if for all $f:Y\sr X$ and $A\in F(X)$ the squares (\ref{2016.07.05.eq1}) commute.
\item Let $\wt{P}_*$ be a family of functions of the form (\ref{2016.08.04.eq2}) such that the squares (\ref{2016.07.07.eq3}) commute. Then $\wt{P}_*$ is a morphism of presheaves if and only if for all $f:Y\sr X$ and $A\in F(X)$ the squares (\ref{2016.07.05.eq1}) with $\wt{P}_{*,*}=\Psi_{\cal D}(\wt{P}_*)_{*,*}$ commute. \end{enumerate}
\end{lemma}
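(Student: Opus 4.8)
The plan is to obtain both statements as immediate formal consequences of Lemma \ref{2016.08.02.l3} (which says $\Phi_{\cal D}$ and $\Psi_{\cal D}$ are mutually inverse bijections between $Fam_{\cal D}$ and $Fun_{\cal D}$) together with the two inclusions (\ref{2016.07.07.eq1}) and (\ref{2016.07.07.eq2}) of Lemma \ref{2016.07.07.l1}. The first step is purely a matter of unwinding definitions: the condition ``for all $f:Y\sr X$ and $A\in F(X)$ the squares (\ref{2016.07.05.eq1}) commute'' is, verbatim, the condition $\wt{P}_{*,*}\in Fam^N_{\cal D}$; and for a family $\wt{P}_*$ that lies in $Fun_{\cal D}$ --- which in part (1) holds automatically because $\Phi_{\cal D}$ takes values in $Fun_{\cal D}$, and in part (2) is part of the hypothesis --- ``$\wt{P}_*$ is a morphism of presheaves'' is exactly the condition that the squares (\ref{2016.07.07.eq3}) commute, that is, $\wt{P}_*\in Fun^N_{\cal D}$, since a morphism between the presheaves $\wt{F}$ and $\wt{G}$ is just a natural transformation.

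For part (1): the ``if'' direction is precisely the inclusion (\ref{2016.07.07.eq1}). For the ``only if'' direction, assume $\Phi_{\cal D}(\wt{P}_{*,*})\in Fun^N_{\cal D}$; since $\Phi_{\cal D}$ and $\Psi_{\cal D}$ are mutually inverse we have $\wt{P}_{*,*}=\Psi_{\cal D}(\Phi_{\cal D}(\wt{P}_{*,*}))$, and the right-hand side belongs to $\Psi_{\cal D}(Fun^N_{\cal D})\subset Fam^N_{\cal D}$ by (\ref{2016.07.07.eq2}); hence $\wt{P}_{*,*}\in Fam^N_{\cal D}$. Alternatively one may invoke Lemma \ref{2016.07.23.l4} with $X_0=Fam^N_{\cal D}$ and $Y_0=Fun^N_{\cal D}$, using (\ref{2016.07.07.eq1}) and the fact that $\Phi^N_{\cal D}$ is a bijection --- exactly as in the proof of Lemma \ref{2016.08.02.l6}.

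Part (2) is the mirror image, with the roles of $\Phi_{\cal D}$ and $\Psi_{\cal D}$ interchanged: the ``only if'' direction is the inclusion (\ref{2016.07.07.eq2}), and for the ``if'' direction one uses that $\wt{P}_*\in Fun_{\cal D}$ by hypothesis, so that $\wt{P}_*=\Phi_{\cal D}(\Psi_{\cal D}(\wt{P}_*))$, which belongs to $\Phi_{\cal D}(Fam^N_{\cal D})\subset Fun^N_{\cal D}$ by (\ref{2016.07.07.eq1}) once $\Psi_{\cal D}(\wt{P}_*)\in Fam^N_{\cal D}$ is assumed. There is no genuine obstacle here: all the substance is already contained in Lemma \ref{2016.07.07.l1}; the only points needing care are the definitional translation of the two unnamed conditions in the statement into membership in $Fam^N_{\cal D}$ and $Fun^N_{\cal D}$, and the observation that in part (2) the ambient set is $Fun_{\cal D}$, so that $\wt{P}_*$ really lies in the domain of $\Psi_{\cal D}$.
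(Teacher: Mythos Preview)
Your proof is correct and matches the paper's approach: the paper's proof is the single line ``Both assertions follow from Lemma \ref{2016.07.07.l1} and Lemma \ref{2016.07.23.l4},'' and your direct argument (apply the inverse bijection, then the other inclusion) is exactly the content of the implication $(1)\Rightarrow(2)$ in Lemma \ref{2016.07.23.l4}, which you also cite explicitly as the alternative route. Your unpacking of the definitions --- that the two conditions in the statement are precisely membership in $Fam^N_{\cal D}$ and $Fun^N_{\cal D}$ --- is the right preliminary step, and your observation that in part (2) the hypothesis is what places $\wt{P}_*$ in $Fun_{\cal D}$ (so that $\Psi_{\cal D}$ applies) correctly handles what appears to be a typo in the statement: the hypothesis should refer to the squares (\ref{2016.07.07.eq5}), not (\ref{2016.07.07.eq3}), by analogy with part (2) of Lemma \ref{2016.08.02.l6}.
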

\begin{proof}
Both assertions follow from Lemma \ref{2016.07.07.l1} and Lemma \ref{2016.07.23.l4}.
\end{proof}
Let $Fam^N_{{\cal D},0}$ be the subset of $Fam^N_{\cal D}$ that consists of those double families where all functions $\wt{P}_{X,A}$ are bijections and $Fun^N_{{\cal D},0}$ be the subset of $Fun^N_{\cal D}$ that consists of those morphisms of presheaves $\wt{P}$ that make (\ref{2016.07.07.eq6}) a pullback. We will need the following result.
\begin{lemma}\llabel{2016.08.02.l2}
A commutative square $\cal S$ of morphisms of presheaves on a category $\cal C$ is a pullback in the category of presheaves if and only if for each $X\in {\cal C}$ the square ${\cal S}(X)$ is a pullback in the category of sets.
\end{lemma}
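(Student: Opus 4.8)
The plan is to prove this in the standard way, namely that limits in functor categories are computed pointwise, but to do so concretely rather than by invoking a black box. I will reduce the statement to two facts: that the pointwise set-theoretic fibre product carries a natural presheaf structure which is a pullback in the (meta-)category of presheaves, and that a morphism of presheaves is an isomorphism exactly when each of its components is a bijection of sets; a short diagram chase then finishes the argument. Throughout I keep the diagrammatic composition of the paper and, in accordance with Remark \ref{2016.09.05.rem1}, I read ``$\cal S$ is a pullback in the category of presheaves'' as the universal property stated with unbounded quantification, so that constructing a presheaf with this property is legitimate even though presheaves do not form a genuine category.

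First I would write ${\cal S}$ as the commutative square
$$
\begin{CD}
F_0 @>\alpha>> F_1\\
@V\beta VV @VV\delta V\\
F_2 @>\gamma>> F_3
\end{CD}
$$
so that $\alpha\circ\delta=\beta\circ\gamma$, and introduce the presheaf $\Pi$ with $\Pi(X)=\{(x,y)\in F_1(X)\times F_2(X)\mid\delta_X(x)=\gamma_X(y)\}$ and restriction $\Pi(\phi)(x,y)=(F_1(\phi)(x),F_2(\phi)(y))$ for $\phi\colon Y\sr X$ (well defined and functorial by the naturality and functoriality of $F_1$, $F_2$, $\delta$, $\gamma$), together with the projections $\pi_1\colon\Pi\sr F_1$, $\pi_2\colon\Pi\sr F_2$ and the comparison morphism $c\colon F_0\sr\Pi$ given by $c_X(z)=(\alpha_X(z),\beta_X(z))$. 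This $c$ lands in $\Pi(X)$ precisely because $\alpha\circ\delta=\beta\circ\gamma$, is natural because $\alpha$ and $\beta$ are, and satisfies $c\circ\pi_1=\alpha$ and $c\circ\pi_2=\beta$. These are all routine naturality checks.

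The first real step is to verify that $(\Pi,\pi_1,\pi_2)$ is a pullback of $\gamma$ and $\delta$ in the category of presheaves: given a presheaf $T$ and morphisms $u\colon T\sr F_1$, $v\colon T\sr F_2$ with $u\circ\delta=v\circ\gamma$, the rule $t\mapsto(u_X(t),v_X(t))$ defines a morphism $T\sr\Pi$ (it lands in $\Pi(X)$ by evaluating $u\circ\delta=v\circ\gamma$ at $X$, and it is natural because $u$ and $v$ are), and it is the unique morphism $h$ with $h\circ\pi_1=u$ and $h\circ\pi_2=v$, since an element of $\Pi(X)$ is determined by its two coordinates. The second, purely bookkeeping, step is that for each $X$ the set $\Pi(X)$ is by construction the ordinary set-theoretic fibre product $F_1(X)\times_{F_3(X)}F_2(X)$ with its two projections, and $c_X$ is exactly the canonical comparison map of the cone ${\cal S}(X)$ into that fibre product; hence, by uniqueness of pullbacks in $Sets$, the square ${\cal S}(X)$ is a pullback of sets if and only if $c_X$ is a bijection.

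Finally I would assemble the pieces. By Step 1, ${\cal S}$, regarded as a cone over $\gamma,\delta$ with apex $F_0$, is a pullback in the category of presheaves if and only if the comparison morphism $c\colon F_0\sr\Pi$ is an isomorphism of presheaves; and a morphism of presheaves is an isomorphism if and only if all of its components are bijections (if $c$ has an inverse, its components invert the $c_X$; conversely the maps $c_X^{-1}$ are natural because the $c_X$ are, so they assemble into an inverse of $c$). Combined with Step 2 this yields the chain ``${\cal S}$ is a pullback of presheaves $\iff$ $c$ is an isomorphism $\iff$ every $c_X$ is a bijection $\iff$ every ${\cal S}(X)$ is a pullback of sets'', which is the assertion. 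I do not expect a genuine obstacle: the mathematical content is entirely routine, and the only points requiring a little care are the naturality verifications for $\Pi$ and $c$ and keeping the universe-free, meta-categorical reading of ``pullback of presheaves'' consistent throughout, so that Step 1 is understood as exhibiting a concrete pullback in that sense.
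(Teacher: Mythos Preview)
Your proposal is correct and gives the standard pointwise-limits argument in full detail. The paper itself does not prove this lemma at all: it simply cites \cite[Theorem 7.5.2, p.52]{Schubert} as a reference for this well-known fact, so your argument is strictly more informative than what the paper provides, while following exactly the route one would find in such a reference.
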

\begin{proof}
This fact is well known, for a proof see \cite[Theorem 7.5.2, p.52]{Schubert}.
\end{proof}
\begin{lemma}
\llabel{2016.07.07.l2}
One has
$$\Phi^N_{\cal D}(Fam^N_{{\cal D},0})\subset Fun^N_{{\cal D},0}$$
$$\Psi^N_{\cal D}(Fun^N_{{\cal D},0})\subset Fam^N_{{\cal D},0}$$
and the corresponding functions 
$$\Phi^N_{{\cal D},0}:Fam^N_{{\cal D},0}\sr Fun^N_{{\cal D},0}$$
$$\Psi^N_{{\cal D},0}:Fun^N_{{\cal D},0}\sr Fam^N_{{\cal D},0}$$
are mutually inverse bijections.
\end{lemma}
\begin{proof}
The second statement follows from the first one by Lemma \ref{2016.06.09.l1}. 

The first statement follows from Lemma \ref{2016.06.09.l2} and Lemma \ref{2016.08.02.l2}. 
\end{proof}
\begin{lemma}\llabel{2016.08.06.l3}
Let $\cal D'$ be a commutative square of morphisms of presheaves of the form (\ref{2016.07.07.eq6}) and $\cal D$ be a diagram of morphisms of presheaves of the form (\ref{2016.06.09.eq6}) obtained from $\cal D'$ by removing $\wt{P}$. Then $\cal D'$ is a pullback if and only if for all $X\in C$ and $A\in F(X)$ the function 
$$\Psi_{\cal D}(\wt{P})_{X,A}:a_X^{-1}(A)\sr b_X^{-1}(P_X(A))$$
is a bijection.
\end{lemma}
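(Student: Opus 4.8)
The plan is to reduce the statement to the pointwise (set-level) case handled in Appendix A, via the pointwise characterization of pullbacks of presheaves. First I would apply Lemma \ref{2016.08.02.l2} to replace ``$\cal D'$ is a pullback'' by ``for every $X\in {\cal C}$ the square ${\cal D}'(X)$ of sets is a pullback'', where ${\cal D}'(X)$ is the commutative square (\ref{2016.07.07.eq5}). This is the only genuinely non-formal input, and since it is cited there is nothing further to prove at this step.

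Next, fixing $X\in {\cal C}$, I would note that because $\cal D'$ is a commutative square of morphisms of presheaves, the square (\ref{2016.07.07.eq5}) commutes; hence $\wt{P}_X$ is a function of the kind to which Lemma \ref{2016.08.02.l6}(2) applies, taken with the diagram $D={\cal D}(X)$. That lemma then yields: the square ${\cal D}'(X)$ is a pullback in sets if and only if every function $\Psi_{{\cal D}(X)}(\wt{P}_X)_A$, for $A\in F(X)$, is a bijection.

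Finally I would unwind the definition of $\Psi_{\cal D}$: it is by construction obtained by applying $\Psi_D$ to the diagrams ${\cal D}(X)$, so $\Psi_{\cal D}(\wt{P})_{X,A}=\Psi_{{\cal D}(X)}(\wt{P}_X)_A$ for all $X$ and $A\in F(X)$. Chaining the three observations gives exactly the asserted equivalence: $\cal D'$ is a pullback iff for every $X$ all $\Psi_{\cal D}(\wt{P})_{X,A}$ are bijections, iff for all $X\in {\cal C}$ and $A\in F(X)$ the function $\Psi_{\cal D}(\wt{P})_{X,A}$ is a bijection. An alternative, essentially equivalent route is to observe directly that $\Psi_{\cal D}(\wt{P})\in Fam^N_{{\cal D},0}$ exactly when $\wt{P}\in Fun^N_{{\cal D},0}$ --- combining Lemma \ref{2016.07.07.l2} with Lemma \ref{2016.07.23.l4} applied to the bijection $\Psi^N_{\cal D}$ --- and then recalling that $Fun^N_{{\cal D},0}$ is by definition the set of morphisms $\wt{P}$ making (\ref{2016.07.07.eq6}) a pullback.

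I do not anticipate any real obstacle here; the one point deserving a line of care is to confirm that $\wt{P}$, viewed as a morphism of presheaves, restricts at each $X$ to a commutative square of sets (so that the hypothesis of Lemma \ref{2016.08.02.l6}(2) is met), which is immediate from the commutativity of $\cal D'$, and that the $(X,A)$-component of the double family $\Psi_{\cal D}(\wt{P})$ is indeed $\Psi_{{\cal D}(X)}(\wt{P}_X)_A$, which holds by the very construction of $\Psi_{\cal D}$.
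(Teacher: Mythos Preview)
Your proposal is correct. The alternative route you sketch at the end --- combining Lemma \ref{2016.07.07.l2} with Lemma \ref{2016.07.23.l4} applied to the bijection $\Psi^N_{\cal D}$ of Lemma \ref{2016.07.07.l1} --- is precisely the paper's own proof. Your main route, reducing via Lemma \ref{2016.08.02.l2} to the pointwise statement and then invoking the set-level Lemma \ref{2016.08.02.l6}(2) at each $X$, is a valid and slightly more direct alternative: it bypasses the $Fun^N_{\cal D}/Fam^N_{\cal D}$ layer entirely, since naturality of $\wt{P}$ plays no role beyond ensuring that Lemma \ref{2016.08.02.l2} applies. The paper's route has the virtue of reusing the bijection machinery already assembled; yours is more self-contained.
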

\begin{proof}
The "only if" part follows from the second inclusion of Lemma \ref{2016.07.07.l2}. The "if" part follows from the fact that $\Psi_{\cal D}^N$ is bijective by Lemma \ref{2016.07.07.l1} and Lemma \ref{2016.07.23.l4}.
\end{proof}

\subsection{Appendix C. Exchange isomorphisms}
\label{App.C}

Let $f:X\sr Z$, $g:Y\sr Z$ be two morphisms and let
$$
\begin{CD}
pb(f,g) @>pr^{f,g}_X>> X\\
@Vpr^{f,g}_Y VV @VV f V\\
Y @>g>> Z
\end{CD}
\spc\spc
\begin{CD}
pb(g,f) @>pr^{g,f}_Y>> Y\\
@Vpr^{g,f}_X VV @VV gV\\
X @>f>> Z
\end{CD}
$$
be pullbacks. Since the first square commutes we have
$$pr^{f,g}_X\circ f=pr^{f,g}_Y\circ g$$
and therefore since the second square is a pullback there exists a unique morphism
$$exch_1:pb(f,g)\sr pb(g,f)$$
such that
$$exch_1\circ pr^{g,f}_X=pr^{f,g}_X$$
$$exch_1\circ pr^{g,f}_Y=pr^{f,g}_Y$$
Applying the same reasoning with the roles of two squares exchanged we obtain a unique morphism
$$exch_2:pb(g,f)\sr pb(f,g)$$
such that
$$exch_2\circ pr^{f,g}_Y=pr^{g,f}_Y$$
$$exch_2\circ pr^{f,g}_X=pr^{g,f}_X$$
\begin{lemma}
\llabel{2016.05.18.l2}
In the notations introduced above one has
\begin{eq}
\llabel{2016.05.18.eq2}
exch_1\circ exch_2=Id_{pb(f,g)}
\end{eq}
$$exch_2\circ exch_1=Id_{pb(g,f)}$$
In particular, both $exch_1$ and $exch_2$ are isomorphisms.
\end{lemma}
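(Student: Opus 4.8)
The plan is to exploit the uniqueness clause in the universal property of the pullback $pb(f,g)$. First I observe that the composite $exch_1\circ exch_2:pb(f,g)\sr pb(f,g)$ is one morphism into the pullback square $(pb(f,g),pr^{f,g}_X,pr^{f,g}_Y)$, and so is the identity $Id_{pb(f,g)}$; if I can show these two morphisms induce the same two projection components, the universal property forces them to be equal. So the key step is the computation
$$
(exch_1\circ exch_2)\circ pr^{f,g}_X = exch_1\circ(exch_2\circ pr^{f,g}_X)=exch_1\circ pr^{g,f}_X = pr^{f,g}_X,
$$
where the middle equality is the defining property of $exch_2$ and the last is the defining property of $exch_1$. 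The same chain with $Y$ in place of $X$ gives $(exch_1\circ exch_2)\circ pr^{f,g}_Y = pr^{f,g}_Y$. Since $Id_{pb(f,g)}$ trivially satisfies $Id\circ pr^{f,g}_X = pr^{f,g}_X$ and $Id\circ pr^{f,g}_Y = pr^{f,g}_Y$, the uniqueness part of the universal property of the pullback $pb(f,g)$ yields $exch_1\circ exch_2 = Id_{pb(f,g)}$, which is (\ref{2016.05.18.eq2}).

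Then I would establish the second identity $exch_2\circ exch_1 = Id_{pb(g,f)}$ by the identical argument with the roles of the two squares exchanged: compose with $pr^{g,f}_Y$ and $pr^{g,f}_X$, use first the defining property of $exch_1$ and then that of $exch_2$ to reduce each composite to the corresponding projection of $pb(g,f)$, and invoke uniqueness for the pullback $pb(g,f)$. Finally, the last sentence of the statement is immediate: two morphisms that are mutually inverse in both compositions are isomorphisms by definition, so both $exch_1$ and $exch_2$ are isomorphisms.

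I do not anticipate a serious obstacle here; the only thing to be careful about is bookkeeping with the diagrammatic composition order used throughout the paper (for $u:P\sr Q$ and $v:Q\sr R$ the composite is written $u\circ v$), so that the associativity regroupings and the substitution of the defining equations of $exch_1,exch_2$ are applied on the correct side. In particular, when I write $(exch_1\circ exch_2)\circ pr^{f,g}_X$ I should regroup it as $exch_1\circ(exch_2\circ pr^{f,g}_X)$, which is legitimate by associativity of composition. The rest is purely formal manipulation inside the two universal properties.
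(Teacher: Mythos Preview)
Your proposal is correct and follows essentially the same approach as the paper: both use the uniqueness clause of the pullback $pb(f,g)$ and verify that $exch_1\circ exch_2$ agrees with $Id_{pb(f,g)}$ after composing with each projection, via the same chain $exch_1\circ exch_2\circ pr^{f,g}_X=exch_1\circ pr^{g,f}_X=pr^{f,g}_X$, then invoke symmetry for the second equation.
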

\begin{proof}
It is sufficient to prove (\ref{2016.05.18.eq2}). The proof of the second equality is exactly symmetrical to the proof of the first. 

The domain and codomain of both sides of (\ref{2016.05.18.eq2})  is $pb(f,g)$. We know that $pb(f,g)$ is a pullback with projections $pr^{f,g}_X$ and $pr^{f,g}_Y$. Therefore it is sufficient to show that the compositions of the left and right hand sides of (\ref{2016.05.18.eq2}) with these projections coincide. 

We have
$$exch_1\circ exch_2\circ pr^{f,g}_X=exch_1\circ pr^{g,f}_X=pr^{f,g}_X=Id_{pb(f,g)}\circ pr^{f,g}_X$$
and similarly for the second projection. The lemma is proved. 
\end{proof}

\section{Acknowledgements}
\label{Sec.6}

I am grateful to the Department of Computer Science and Engineering of the University of Gothenburg and Chalmers University of Technology for its the hospitality during my work on the paper.  

Work on this paper was supported by NSF grant 1100938.

This material is based on research sponsored by The United States Air Force Research Laboratory under agreement number FA9550-15-1-0053. The US Government is authorized to reproduce and distribute reprints for Governmental purposes notwithstanding  any copyright notation thereon.

The views and conclusions contained herein are those of the author and should not be interpreted as necessarily representing the official policies or endorsements, either expressed or implied, of the United States Air Force Research Laboratory, the U.S. Government or Carnegie Mellon University.

%\bibliography{../../../alggeom}
%\bibliographystyle{plain}

\def\cprime{$'$}

\end{document}